\numberwithin{equation}{section}
\def\demo{\noindent{\it Proof. }}
\newtheorem{theorem}{Theorem}[section]
\newtheorem{lemma}[theorem]{Lemma}
\newtheorem{proposition}[theorem]{Proposition}
\newtheorem{corollary}[theorem]{Corollary}
\newtheorem{conjecture}[theorem]{Conjecture}
\theoremstyle{definition}
\newtheorem{definition}[theorem]{Definition} 
\newtheorem{remark}[theorem]{Remark}
\newtheorem{example}[theorem]{Example}
\begin{document}

%%%%%%%%%%%%%%%%%%%%%%%%%%%%%%%%%%%%%%%%%%%%%%%%%%%%%%%%%%%%%%%%%%%%%

\title[Minimum distance functions and Reed-Muller-type codes]{Minimum
distance functions of graded ideals   
and Reed-Muller-type codes} 

\thanks{The first and third author were supported by SNI. The second author was
supported by CONACyT}
\thanks{*Corresponding author}
\author{Jos\'e Mart\'\i nez-Bernal}
\address{
Departamento de
Matem\'aticas\\
Centro de Investigaci\'on y de Estudios
Avanzados del
IPN\\
Apartado Postal
14--740 \\
07000 Mexico City, D.F.
}
\email{jmb@math.cinvestav.mx}

\author{Yuriko Pitones}
\address{
Departamento de
Matem\'aticas\\
Centro de Investigaci\'on y de Estudios
Avanzados del
IPN\\
Apartado Postal
14--740 \\
07000 Mexico City, D.F.
}
\email{ypitones@math.cinvestav.mx}

\author{Rafael H. Villarreal*}
\address{
Departamento de
Matem\'aticas\\
Centro de Investigaci\'on y de Estudios
Avanzados del
IPN\\
Apartado Postal
14--740 \\
07000 Mexico City, D.F.
}
\email{vila@math.cinvestav.mx}
%\urladdr{http://www.math.cinvestav.mx/$\sim$vila/}

\keywords{Reed-Muller-type codes, 
minimum distance, vanishing
ideals, degree, Hilbert function.}
\subjclass[2010]{Primary 13P25; Secondary 14G50, 94B27, 11T71.} 
\begin{abstract} 
We introduce and study the {\it minimum distance function\/} of a
graded ideal in a polynomial ring with coefficients in a field, and
show that it generalizes the minimum 
distance of projective Reed-Muller-type codes over finite fields. This gives an algebraic
formulation of the minimum distance of a projective Reed-Muller-type code 
in terms of the algebraic invariants and structure of the underlying
vanishing ideal. Then we give a
method, based on Gr\"obner bases and Hilbert functions, to find 
lower bounds for the minimum distance of
certain Reed-Muller-type codes. 
Finally we
show explicit upper bounds for the number of zeros of polynomials in 
a projective nested cartesian set and give some support to a conjecture of
Carvalho, Lopez-Neumann and L\'opez. 
\end{abstract}

\maketitle 

\section{Introduction}\label{intro-section}
Let $S=K[t_1,\ldots,t_s]=\oplus_{d=0}^{\infty} S_d$ be a polynomial ring over
a field $K$ with the standard grading and let $I\neq(0)$ be a graded ideal
of $S$ of Krull dimension $k$. The {\it Hilbert function} of $S/I$ is: 
$$
H_I(d):=\dim_K(S_d/I_d),\ \ \ d=0,1,2,\ldots,
$$
where $I_d=I\cap S_d$. By a theorem of Hilbert, there is a unique polynomial
$h_I(x)\in\mathbb{Q}[x]$ of 
degree $k-1$ such that $H_I(d)=h_I(d)$ for  $d\gg 0$. The
degree of the zero polynomial is $-1$.  

The {\it degree\/} or {\it multiplicity\/} of $S/I$ is the 
positive integer 
$$
\deg(S/I):=\left\{\begin{array}{ll}(k-1)!\, \lim_{d\rightarrow\infty}{H_I(d)}/{d^{k-1}}
&\mbox{if }k\geq 1,\\
\dim_K(S/I) &\mbox{if\ }k=0.
\end{array}\right.
$$ 

Let $\mathcal{F}_d$ be the set of 
all zero-divisors of $S/I$ not in $I$ of degree $d\geq 0$:
$$
\mathcal{F}_d:=\{\, f\in S_d\, \vert\, f\notin I,\, (I\colon f)\neq I\},
$$
where $(I\colon f)=\{h\in S\vert\, hf\in I\}$ is a quotient ideal.
Notice that $\mathcal{F}_0=\emptyset$.

The main object of study here is the function 
$\delta_I\colon \mathbb{N}\rightarrow \mathbb{Z}$ given by 
$$
\delta_I(d):=\left\{\begin{array}{ll}\deg(S/I)-\max\{\deg(S/(I,f))\vert\,
f\in\mathcal{F}_d\}&\mbox{if }\mathcal{F}_d\neq\emptyset,\\
\deg(S/I)&\mbox{if\ }\mathcal{F}_d=\emptyset.
\end{array}\right.
$$

We call $\delta_I$ the {\it minimum distance function\/} of
$I$. If $I$ is a prime ideal, then
$\mathcal{F}_d=\emptyset$ for all $d\geq 0$ and
$\delta_I(d)=\deg(S/I)$. We show that $\delta_I$ generalizes
the minimum distance function of projective Reed-Muller-type codes over finite
fields (Theorem~\ref{min-dis-vi}). 
This abstract algebraic formulation of the minimum distance gives a
new tool to study these type of linear codes.  

To compute $\delta_I(d)$ is a
difficult problem. For certain family of ideals 
we will give lower bounds for $\delta_I(d)$ which 
are easier to compute. 

Fix a monomial order $\prec$ on $S$. Let $\Delta_\prec(I)$  be the
{\it footprint\/} 
of $S/I$ consisting of all the {\it standard monomials\/} of $S/I$, with respect 
to $\prec$, and let $\mathcal{G}=\{g_1,\ldots,g_r\}$ be a Gr\"obner basis
of $I$. Then $\Delta_\prec(I)$ is the set of all monomials of $S$ that
are not a multiple of any of the leading monomials of
$g_1,\ldots,g_r$ (Lemma~\ref{nov6-15}). A polynomial $f$ is called {\it standard\/} if
$f\neq 0$ and $f$ is a
$K$-linear combination of standard monomials.

If $\Delta_\prec(I)\cap
S_d=\{t^{a_1},\ldots,t^{a_n}\}$ and $\mathcal{F}_{\prec,d}=\left.\left\{f=\sum_i{\lambda_i}t^{a_i}\, \right|\,
f\neq 0,\ \lambda_i\in
K,\, (I\colon f)\neq I\right\}$, then using the division algorithm
\cite[Theorem~3, p.~63]{CLO} we can write:
\begin{eqnarray*}
\delta_I(d)&=&
\deg(S/I)-\max\{\deg(S/(I,f))\vert\, f\in\mathcal{F}_d\}\\
&=&
\deg(S/I)-\max\{\deg(S/(I,f))\vert\, f\in\mathcal{F}_{\prec, d}\}.
\end{eqnarray*}

Notice that $\mathcal{F}_d\neq\emptyset$ if and only if
$\mathcal{F}_{\prec, d}\neq\emptyset$. 
If $K=\mathbb{F}_q$ is a finite field, then the number of standard
polynomials of degree $d$ is $n^q-1$, where $n$ is the
number of standard monomials of degree $d$.  
Hence, we can compute $\delta_I(d)$ for small values of
$n$ and $q$ (Examples~\ref{dec4-15} and \ref{example-min-dis}). 

Upper bounds for
$\delta_I(d)$ can be obtained by fixing a subset $\mathcal{F}_{\prec, d}'$ of
$\mathcal{F}_{\prec, d}$ and computing 
$$
\delta_I'(d)=
\deg(S/I)-\max\{\deg(S/(I,f))\vert\, f\in\mathcal{F}_{\prec, d}'\}\geq
\delta_I(d).
$$
Typically one uses $\mathcal{F}_{\prec,d}'=\left.\left\{f=\sum_i{\lambda_i}t^{a_i}\, \right|\,
f\neq 0,\ \lambda_i\in
\{0,1\},\, (I\colon f)\neq I\right\}$ or a subset of it.

Lower bounds for $\delta_I(d)$ are harder to find. Thus, we seek to
estimate $\delta_I(d)$ from below. So, with this in  
mind, we introduce the {\it footprint
function\/} 
of $I$: 
$$
{\rm fp}_I(d)=\left\{\begin{array}{ll}\deg(S/I)-\max\{\deg(S/({\rm
in}_\prec(I),t^a))\,\vert\,
t^a\in
\Delta_\prec(I)_d\}&\mbox{if }\Delta_\prec(I)_d\neq\emptyset,\\
\deg(S/I)&\mbox{if }\Delta_\prec(I)_d=\emptyset,
\end{array}\right.
$$
where ${\rm in}_\prec(I)=({\rm in}_\prec(g_1),\ldots,{\rm
in}_\prec(g_r))$ is the initial ideal of $I$, ${\rm
in}_\prec(g_i)$ is the initial monomial of $g_i$ for $i=1,\ldots,s$,
and $\Delta_\prec(I)_d=\Delta_\prec(I)\cap S_d$. 

The contents of this
paper are as follows. In Section~\ref{prelim-section} we 
present some of the results and terminology that will be needed
throughout the paper. 

Some of our results rely on 
a degree formula to compute the number of zeros that a homogeneous polynomial has
in any given finite set of points in a projective space 
(Lemma~\ref{degree-formula-for-the-number-of-zeros-proj}). 

In Section~\ref{mdf-section} we study
$\delta_I$ and present an alternative formula for $\delta_I$, pointed out to
us by Vasconcelos, valid for unmixed graded ideals (Theorem~\ref{wolmer-obs}). 
If $\mathcal{F}_d\neq\emptyset$ for $d\geq 1$ and
$I$ is unmixed, then, by
Lemma~\ref{degree-initial-footprint}, ${\rm fp}_I(d)$ is a lower bound of
$\delta_I(d)$, but even in this case ${\rm
fp}_I(d)$ could be negative (Example~\ref{footprint-bad}).  For this reason we will
sometimes make extra assumptions requiring that $\dim(S/I)\geq 1$ and
that $t_i$ is a zero-divisor of
$S/I$ for $i=1,\ldots,s$ (cf. Lemma~\ref{regular-init}).

One of our first main results gives some general properties of $\delta_I$
for an interesting class of graded ideals and show its relation to ${\rm
fp}_I$:

\noindent {\bf Theorem~\ref{footprint-lowerbound}.} 
{\it\ Let $I\subset S$ be an unmixed graded ideal of dimension
$\geq 1$ such that $t_i$ is a zero-divisor of $S/I$ for
$i=1,\ldots,s$. The following hold.
\begin{itemize}
\item[(i)] $\mathcal{F}_d\neq\emptyset$ for $d\geq 1$.
\item[(ii)] $\delta_I(d)\geq {\rm fp}_I(d)$ for $d\geq 1$.
\item[(iii)] $\deg(S/(I,t^a))\leq\deg(S/({\rm
in}_\prec(I),t^a))\leq\deg(S/I)$ for any
$t^a\in\Delta_\prec(I)\cap S_d$.
\item[(iv)] ${\rm fp}_I(d)\geq 0$.
\item[(v)] $\delta_I(d)\geq \delta_I(d+1)\geq 0$ 
for $d\geq 1$. 
\item[(vi)] If $I$ is a radical ideal and its associated primes are
generated by linear forms, then there is $r\geq 1$ such that 
$
\delta_I(1)>\cdots>\delta_I(r)=\delta_I(d)=1\
\mbox{ for }\ d\geq r.
$
\end{itemize}
}

The study of $\delta_I$ was motivated by the notion of 
minimum distance of linear codes in coding theory. For convenience we
recall this notion. Let $K=\mathbb{F}_q$ be a finite field. A {\it linear
code\/} is a linear subspace of $K^m$ for some 
$m$. The {\em basic parameters} of a linear code $C$ are {\it
length}:  $m$, 
{\it dimension}: $\dim_K(C)$, and {\it minimum distance\/}: 
$$
\delta(C):=\min\{\|v\|\colon 0\neq v\in C\},
$$
where $\|v\|$ is the number of non-zero
entries of $v$. 

The minimum distance of 
affine Reed-Muller-type codes has been studied using Gr\"obner bases techniques; see
\cite{carvalho,geil,geil-thomsen} and the references 
therein. Of particular interest to us is the {\it footprint
technique\/} introduced by Geil \cite{geil} to bound from below the minimum
distance. In this work we extend this technique to projective
Reed-Muller-type codes,  
a special type of linear codes that generalizes affine 
Reed-Muller-type codes \cite{affine-codes}. 
These projective codes are constructed
as follows. 

Let $K=\mathbb{F}_q$ be a finite field with $q$ elements,
let $\mathbb{P}^{s-1}$ be a projective space over 
$K$, and let $\mathbb{X}$ be a subset of
$\mathbb{P}^{s-1}$. The {\it vanishing ideal\/} of
$\mathbb{X}$, denoted $I(\mathbb{X})$,  is the ideal of $S$ 
generated by the homogeneous polynomials that vanish at all points of
$\mathbb{X}$. In this case the Hilbert function of $S/I(\mathbb{X})$ is denoted by
$H_\mathbb{X}(d)$. We can write
$\mathbb{X}=\{[P_1],\ldots,[P_m]\}\subset\mathbb{P}^{s-1}$ 
with $m=|\mathbb{X}|$.

Fix a degree $d\geq 0$. For each $i$ there is $f_i\in S_d$ such that
$f_i(P_i)\neq 0$. There is a $K$-linear map given by  
\begin{equation*}
{\rm ev}_d\colon S_d\rightarrow K^{m},\ \ \ \ \ 
f\mapsto
\left(\frac{f(P_1)}{f_1(P_1)},\ldots,\frac{f(P_m)}{f_m(P_m)}\right).
\end{equation*}

The image of $S_d$ under ${\rm ev}_d$, denoted by  $C_\mathbb{X}(d)$, is
called a {\it projective Reed-Muller-type code\/} of
degree $d$ on $\mathbb{X}$ \cite{duursma-renteria-tapia,GRT}. 
The {\it basic parameters} of the linear
code $C_\mathbb{X}(d)$ are:
\begin{itemize}
\item[(a)] {\it length\/}: $|\mathbb{X}|$,
\item[(b)] {\it dimension\/}: $\dim_K C_\mathbb{X}(d)$,
\item[(c)] {\it minimum distance\/}:
$\delta_\mathbb{X}(d):=\delta(C_\mathbb{X}(d))$. 
\end{itemize}

The {\it regularity\/} of $S/I(\mathbb{X})$, denoted 
${\rm reg}(S/I(\mathbb{X}))$, is the least integer $r\geq 0$ such that
$H_\mathbb{X}(d)$ is equal to $h_{I(\mathbb{X})}(d)$ for $d\geq r$.
As is seen below,  
the knowledge of the regularity of $S/I(\mathbb{X})$ is
important for applications to  coding theory. According to
\cite{geramita-cayley-bacharach} and 
Proposition~\ref{md-is-decreasing},  
there are integers $r\geq 0$ and $r_1\geq 0$ such that
$$
1=H_\mathbb{X}(0)<H_\mathbb{X}(1)<\cdots<H_\mathbb{X}(r-1)<H_\mathbb{X}(d)=|\mathbb{X}|$$
for $d\geq r= {\rm reg}(S/I(\mathbb{X}))$, and 
$$
|\mathbb{X}|=\delta_\mathbb{X}(0)>\delta_\mathbb{X}(1)>\cdots>\delta_\mathbb{X}(r_1-1)>\delta_\mathbb{X}(r_1)=\delta_\mathbb{X}(d)=1
\ \mbox{ for }\ d\geq r_1,
$$
respectively. The integer $r_1$ is called the 
{\it minimum distance regularity} of $S/I(\mathbb{X})$. In general
$r_1\leq r$ (see the discussion below). Using the methods of
\cite{algcodes,vanishing-ideals}, the regularity
of $S/I(\mathbb{X})$ can be effectively computed when $\mathbb{X}$ is
parameterized by monomials, but $r_1$ is very difficult to compute.      

The Hilbert function and the minimum distance are related by 
the Singleton bound: 
$$
1\leq \delta_\mathbb{X}(d)\leq |\mathbb{X}|-H_\mathbb{X}(d)+1.
$$
In particular, if $d\geq {\rm reg}(S/I(\mathbb{X}))\geq 1$, then
$\delta_\mathbb{X}(d)=1$. The converse is not true (Example~\ref{reg-min-dis}). Thus, potentially 
good Reed-Muller-type codes $C_\mathbb{X}(d)$ can occur only if $1\leq d < {\rm
reg}(S/I(\mathbb{X}))$. There are some families where $d\geq {\rm
reg}(S/I(\mathbb{X}))\geq 1$ 
if and only if $\delta_\mathbb{X}(d)=1$
\cite{cartesian-codes,ci-codes,sorensen}, but we do not know of any
set $\mathbb{X}$ parameterized by monomials where this fails. If
$\mathbb{X}$ is parameterized by monomials we say 
that $C_{\mathbb{X}}(d)$ is a {\it projective parameterized code\/}
\cite{algcodes,vanishing-ideals}.

A main problem in Reed-Muller-type codes is the following. 
If $\mathbb{X}$ has nice algebraic or combinatorial
structure, find formulas in terms of $s,q,d$, and the
structure of $\mathbb{X}$, for the {\em basic
parameters} of $C_\mathbb{X}(d)$: $H_\mathbb{X}(d)$, ${\rm
deg}(S/I(\mathbb{X}))$, $\delta_\mathbb{X}(d)$, and ${\rm
reg}(S/I(\mathbb{X}))$. Our main results can be used to study this
problem, especially when $\mathbb{X}$ is parameterized by monomials
or when $\mathbb{X}$ is a projective nested cartesian set (see
Definition~\ref{pncc}).  

The basic parameters of projective Reed-Muller-type codes have been
computed in some cases. If $\mathbb{X}=\mathbb{P}^{s-1}$, $C_\mathbb{X}(d)$ is the 
{\it classical projective
Reed--Muller code}. Formulas for its basic parameters were given in
\cite[Theorem~1]{sorensen}. If $\mathbb{X}$ is a projective torus (see
Definition~\ref{projectivetorus-def}), $C_\mathbb{X}(d)$ is
the {\it generalized projective Reed--Solomon code}. Formulas for its
basic parameters were given in \cite[Theorem~3.5]{ci-codes}. If $\mathbb{X}$ is the
image of a cartesian product of subsets of $K$, under the map
$K^{s-1}\rightarrow\mathbb{P}^{s-1}$, $x\rightarrow [x,1]$, then
$C_\mathbb{X}(d)$ is an {\it affine cartesian code} and formulas for its basic
parameters were given in \cite{geil-thomsen,cartesian-codes}.  

We give a formula, in terms of the degree, for the number of zeros 
in $\mathbb{X}\subset\mathbb{P}^{s-1}$ of any homogeneous polynomial
(Lemma~\ref{degree-formula-for-the-number-of-zeros-proj}). As a 
consequence we derive our second main result:

\noindent {\bf Theorem~\ref{min-dis-vi}.}{\it\ If $|\mathbb{X}|\geq 2$, then
$\delta_\mathbb{X}(d)=\delta_{I(\mathbb{X})}(d)\geq 1$ 
for $d\geq 1$.}

In particular, if $\prec$ is a monomial order on $S$, then
\begin{equation}\label{min-dis-vi-for}
\delta_\mathbb{X}(d)=\deg(S/I)-\max\{\deg(S/(I,f))\vert\,  
f\in\mathcal{F}_{\prec, d}\}.
\end{equation}

This description allows us to compute the minimum 
distance of Reed-Muller-type codes for small values of $q$
and $s$ (Corollary~\ref{oct15-15-1}) and it gives an algebraic
formulation of the minimum distance in terms of the algebraic
properties and invariants of the vanishing ideal. 
The formula of Eq.~(\ref{min-dis-vi-for}) is more interesting from 
the theoretical point of view than from a computational perspective.
Indeed, if $t_i$ is a zero-divisor of $S/I(\mathbb{X})$ for all $i$, 
then one has:
$$\delta_\mathbb{X}(d)\geq {\rm fp}_{I(\mathbb{X})}(d)\geq 0\ \mbox{
for }\ d\geq 1.$$

This inequality gives a lower bound for the minimum distance of any
Reed-Muller-type code 
over a set $\mathbb{X}$ parameterized by relatively prime monomials because in this case $t_i$ is
a zero-divisor of $S/I(\mathbb{X})$ for $i=1,\ldots,s$ 
(Corollary~\ref{footprint-lowerbound-vanishing}). 
Using SAGE \cite{sage} and a generator matrix of $C_\mathbb{X}(d)$ 
one can compute the minimum distance of $C_\mathbb{X}(d)$ 
in a more efficient way than by using our formula at least in the case
that $C_\mathbb{X}(d)$ arises from an affine Reed-Muller type code
\cite{affine-codes}.  

Let $d_1,\ldots,d_s$ be a non-decreasing sequence of positive integers
with $d_1\geq 2$ and $s\geq 2$, and let $L$ be the
ideal of $S$ generated by the set of all $t_it_j^{d_j}$ such that
$1\leq i<j\leq s$. It turns out that the ideal $L$ is
the initial ideal of the vanishing ideal of a 
projective nested cartesian set (see the discussion below). 
In Section~\ref{degree-formulas-section} we 
study the ideal $L$ and show some degree equalities 
as a preparation to show some applications.

Projective nested cartesian codes were introduced and studied in 
\cite{carvalho-lopez-lopez} (see Definition~\ref{pncc}). This type 
of evaluation codes generalize the classical projective Reed--Muller
codes \cite{sorensen}. As an application we will give some support for
the following interesting conjecture.

\noindent {{\bf Conjecture~\ref{carvalho-lopez-lopez-conjecture}.} {\rm
(Carvalho, Lopez-Neumann, and L\'opez \cite{carvalho-lopez-lopez})}} 
{\it\ Let $A_1,\ldots,A_s$ be subsets 
of $K$ and let $C_\mathcal{X}(d)$ be the $d$-th projective nested
cartesian code on the set
$\mathcal{X}=[A_1\times\cdots\times A_s]$ with $d_i=|A_i|$ for
$i=1,\ldots,s$. Then its minimum distance is given by 
$$
\delta_\mathcal{X}(d)=\left\{\hspace{-1mm}
\begin{array}{ll}\left(d_{k+2}-\ell+1\right)d_{k+3}\cdots d_s&\mbox{ if }
d\leq \sum\limits_{i=2}^{s}\left(d_i-1\right),\\
\qquad \qquad 1&\mbox{ if } d\geq
\sum\limits_{i=2}^{s}\left(d_i-1\right)+1,
\end{array}
\right.
$$
where $0\leq k\leq s-2$ and $\ell$ are integers such that 
$d=\sum_{i=2}^{k+1}\left(d_i-1\right)+\ell$ and $1\leq \ell \leq
d_{k+2}-1$.} 

Let $\prec$ be the lexicographical order on $S$ with 
$t_1\prec\cdots \prec t_s$. Carvalho et. al. 
found a Gr\"obner basis for $I(\mathcal{X})$ whose initial ideal
is $L$, and obtained formulas for the 
regularity and the degree of the coordinate ring $S/I(\mathcal{X})$
\cite{carvalho-lopez-lopez} (Proposition~\ref{carvalho-deg-reg}).

They showed the conjecture when the $A_i$'s are subfields of
$\mathbb{F}_q$, and essentially showed that their conjecture can be
reduced to:

\noindent {{\bf Conjecture~\ref{carvalho-lopez-lopez-conjecture-new}.} {\rm
(Carvalho, Lopez-Neumann, and L\'opez
\cite{carvalho-lopez-lopez})}}{\it\ 
If $f\in S_d$ is a standard polynomial such that
$(I(\mathcal{X}):f)\neq I(\mathcal{X})$,  
$1\leq d\leq\sum_{i=2}^{s}(d_i-1)$, and $V_\mathcal{X}(f)$ is the zero set
of $f$ in $\mathcal{X}$, then  
$$
|V_\mathcal{X}(f)|\leq \deg(
S/I(\mathcal{X}))-\left(d_{k+2}-\ell+1\right)d_{k+3}\cdots d_s,
$$
where $0\leq k\leq s-2$ and $\ell$ are integers such that 
$d=\sum_{i=2}^{k+1}\left(d_i-1\right)+\ell$ and $1\leq \ell \leq
d_{k+2}-1$. 
}

Let $f\neq 0$ be a standard polynomial, with respect to $\prec$, of
degree $d\geq 1$ with $d\leq \sum_{i=2}^{s}(d_i-1)$ and
$(I(\mathcal{X}):f)\neq I(\mathcal{X})$, 
and let  ${\rm in}_\prec(f)=t^a$ be its initial monomial. We can
write 
$$
t^a=t_r^{a_r}\cdots t_s^{a_s},
$$
with $1\leq r\leq s$,\,  $a_r\geq 1$,\,  $0\leq a_i\leq d_i-1$ for
$i>r$.  

We show an explicit upper bound for the number of zeros of $f$ in
$\mathcal{X}$:

\noindent {\bf Theorem~\ref{bounds-for-deg-init-f-1}.}{\it\
$|V_{\mathcal{X}}(f)|\leq 
\deg(S/({\rm
in}_\prec(I(\mathcal{X})),t^a))=$
$$
\left\{\begin{array}{ll}\displaystyle\deg
(S/I(\mathcal{X}))-\sum_{i=2}^{r+1}(d_i-a_i)\cdots(d_s-a_s)&\mbox{ if } 
a_r\leq d_r,\\
\displaystyle\deg(
S/I(\mathcal{X}))-(d_{r+1}-a_{r+1})\cdots(d_s-a_s)&\mbox{ if }
a_r\geq d_r+1.
\end{array}
\right.
$$
}

Then we use Theorem~\ref{bounds-for-deg-init-f-1} to give some support
for Conjecture~\ref{carvalho-lopez-lopez-conjecture-new}:

\noindent {\bf Theorem~\ref{yuriko-pepe-vila}.}{\it\ 
If $t_1$ divides  
$t^a={\rm in}_\prec(f)$, then  
$$
|V_\mathcal{X}(f)|\leq \deg
(S/I(\mathcal{X}))-\left(d_{k+2}-\ell+1\right)d_{k+3}\cdots d_s,
$$
where $0\leq k\leq s-2$ and $\ell$ are integers such that 
$d=\sum_{i=2}^{k+1}\left(d_i-1\right)+\ell$ and $1\leq \ell \leq
d_{k+2}-1$.}

As a consequence we show that the minimum distance of $C_\mathcal{X}(d)$ 
proposed in Conjecture~\ref{carvalho-lopez-lopez-conjecture} is in
fact the minimum distance of certain evaluation linear code
(Corollary~\ref{yuriko-pepe-vila-coro}). Finally in
Section~\ref{examples-section}, we show some examples that illustrate
how some of our results
can be used in practice.

\smallskip

For all unexplained
terminology and additional information,  we refer to 
\cite{BHer,CLO,Eisen} (for the theory of Gr\"obner bases, 
commutative algebra, and Hilbert functions), and
\cite{MacWilliams-Sloane,tsfasman} (for the theory of
error-correcting codes and linear codes). 

\section{Preliminaries}\label{prelim-section}

In this section, we 
present some of the results that will be needed throughout the paper
and introduce some more notation. All results of this
section are well-known. To avoid repetitions, we continue to employ
the notations and 
definitions used in Section~\ref{intro-section}.

Let $S=K[t_1,\ldots,t_s]=\oplus_{d=0}^{\infty}S_d$ be a graded
polynomial ring over a field
$K$, with the standard grading, and let $(0)\neq I\subset S$ be a
graded ideal. We will use the following 
multi-index notation: for $a=(a_1,\ldots,a_s)\in\mathbb{N}^s$, set
$t^a:=t_1^{a_1}\cdots t_s^{a_s}$. The multiplicative group of $K$ is
denoted by 
$K^*$. As usual, $\mathfrak{m}$ will denote the 
maximal ideal of $S$ generated by $t_1,\ldots,t_s$, and ${\rm ht}(I)$
will denote the height of the ideal $I$. By the {\rm dimension\/} 
of $I$ (resp. $S/I$) we mean the Krull dimension of $S/I$. The Krull dimension of
$S/I$ is denoted by $\dim(S/I)$.

One of the most useful and well-known facts about the degree is its additivity:

\begin{proposition}{\rm(Additivity of the degree
\cite[Proposition~2.5]{prim-dec-critical})}\label{additivity-of-the-degree}
If $I$ is an ideal of $S$ and 
$I=\mathfrak{q}_1\cap\cdots\cap\mathfrak{q}_m$ 
is an irredundant primary
decomposition, then
$$
\deg(S/I)=\sum_{{\rm ht}(\mathfrak{q}_i)={\rm
ht}(I)}\hspace{-3mm}\deg(S/\mathfrak{q}_i).$$
\end{proposition}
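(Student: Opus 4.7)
The plan is to compute $\deg(S/I)$ via the leading coefficient of the Hilbert polynomial and show that only the primary components of minimal height contribute. The natural tool is a prime filtration of $S/I$ combined with the additivity of the Hilbert function along short exact sequences, together with the classical length-multiplicity formula that expresses the degree of a primary ring in terms of the degree of its radical.

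First I would invoke the standard fact that any finitely generated graded $S$-module (in particular $S/I$) admits a \emph{prime filtration}
\[
0=N_0\subsetneq N_1\subsetneq\cdots\subsetneq N_r=S/I,
\]
where $N_j/N_{j-1}\cong (S/\mathfrak{p}_j)(-a_j)$ for some homogeneous primes $\mathfrak{p}_j\subset S$ and shifts $a_j\in\mathbb{Z}$. Since Hilbert functions are additive on short exact sequences of graded modules, one obtains $H_I(d)=\sum_{j=1}^{r}H_{\mathfrak{p}_j}(d-a_j)$, and hence
\[
h_I(x)=\sum_{j=1}^{r}h_{\mathfrak{p}_j}(x-a_j).
\]
Each summand $h_{\mathfrak{p}_j}$ has degree $\dim(S/\mathfrak{p}_j)-1$. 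Thus only the primes $\mathfrak{p}_j$ with $\dim(S/\mathfrak{p}_j)=\dim(S/I)$, equivalently $\mathrm{ht}(\mathfrak{p}_j)=\mathrm{ht}(I)$, contribute to the leading coefficient, giving
\[
\deg(S/I)=\sum_{\mathrm{ht}(\mathfrak{p}_j)=\mathrm{ht}(I)}\deg(S/\mathfrak{p}_j),
\]
with multiplicities counted by how often each minimal prime appears in the filtration.

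Next I would use the classical fact that the number of times a minimal prime $\mathfrak{p}$ of $I$ occurs among the $\mathfrak{p}_j$ is independent of the chosen filtration and equals the length $\ell_{S_\mathfrak{p}}(S_\mathfrak{p}/I S_\mathfrak{p})$. Localizing the primary decomposition $I=\mathfrak{q}_1\cap\cdots\cap\mathfrak{q}_m$ at a minimal prime $\mathfrak{p}=\sqrt{\mathfrak{q}_i}$ kills all components whose radical is not contained in $\mathfrak{p}$, so $I S_\mathfrak{p}=\mathfrak{q}_i S_\mathfrak{p}$ and
\[
\ell_{S_\mathfrak{p}}(S_\mathfrak{p}/I S_\mathfrak{p})=\ell_{S_\mathfrak{p}}(S_\mathfrak{p}/\mathfrak{q}_iS_\mathfrak{p}).
\]
Applying the same filtration argument to the $\mathfrak{p}$-primary ring $S/\mathfrak{q}_i$ yields $\deg(S/\mathfrak{q}_i)=\ell_{S_\mathfrak{p}}(S_\mathfrak{p}/\mathfrak{q}_iS_\mathfrak{p})\cdot\deg(S/\mathfrak{p})$, so the contribution of the minimal prime $\mathfrak{p}$ to the previous sum is exactly $\deg(S/\mathfrak{q}_i)$.

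Combining the two displays gives the asserted formula. The main obstacle is the bookkeeping step: namely, proving that the multiplicity of a minimal prime $\mathfrak{p}$ in any prime filtration of $S/I$ equals $\ell_{S_\mathfrak{p}}(S_\mathfrak{p}/I S_\mathfrak{p})$. This is the associativity formula for multiplicities, and is the technical heart of the argument; once it is in hand, the comparison with the primary decomposition and the suppression of higher-height components are straightforward dimension considerations on the Hilbert polynomial.
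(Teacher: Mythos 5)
Your argument is correct, but note that the paper does not prove this proposition at all: it is quoted verbatim from \cite[Proposition~2.5]{prim-dec-critical}, so the only comparison available is with the standard literature rather than with an in-paper argument. Your route — prime filtration of $S/I$, additivity of Hilbert functions along short exact sequences (shifts being harmless since $h_{\mathfrak{p}}(x-a)$ has the same degree and leading coefficient as $h_{\mathfrak{p}}(x)$), positivity of leading coefficients so that only primes with ${\rm ht}(\mathfrak{p}_j)={\rm ht}(I)$ survive, and then the associativity formula identifying the multiplicity of a minimal prime $\mathfrak{p}$ in the filtration with $\ell_{S_\mathfrak{p}}(S_\mathfrak{p}/IS_\mathfrak{p})=\ell_{S_\mathfrak{p}}(S_\mathfrak{p}/\mathfrak{q}_iS_\mathfrak{p})$ — is the classical multiplicity-theoretic proof, and all the facts you black-box (existence of prime filtrations, well-definedness of the multiplicity at minimal primes, the length formula $\deg(S/\mathfrak{q}_i)=\ell_{S_\mathfrak{p}}(S_\mathfrak{p}/\mathfrak{q}_iS_\mathfrak{p})\deg(S/\mathfrak{p})$) are indeed standard, e.g.\ in \cite{BHer,Eisen}. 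Two small points: the paper's degree convention treats $\dim(S/I)=0$ separately (there the decomposition has a single $\mathfrak{m}$-primary component and the statement is trivial, so your leading-coefficient argument, which needs $k\geq 1$, should just note this case); and in the localization step one should say that $\mathfrak{q}_jS_\mathfrak{p}=S_\mathfrak{p}$ precisely when ${\rm rad}(\mathfrak{q}_j)\not\subset\mathfrak{p}$, which together with minimality of $\mathfrak{p}$ and irredundancy leaves exactly the one component $\mathfrak{q}_i$ — you state this correctly but it is the place where minimality is genuinely used. An alternative, more self-contained route that avoids the associativity formula is induction on the number of minimal-height components using the exact sequence $0\to S/(A\cap B)\to S/A\oplus S/B\to S/(A+B)\to 0$ and the observation that $\dim S/(A+B)<\dim S/I$ for components with distinct minimal radicals; your approach instead buys a conceptual explanation of why each component contributes exactly $\deg(S/\mathfrak{q}_i)$, at the cost of invoking a deeper classical theorem.
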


\begin{theorem}{\rm(Hilbert \cite[Theorem~4.1.3]{BHer})}\label{hilbert} 
Let $I\subset S$ be a graded ideal of dimension $k$. Then there is a 
polynomial $h_I(x)\in \mathbb{Q}[x]$ of degree $k-1$ 
such that $H_I(d)=h_I(d)$ for $d\gg 0$. 
\end{theorem}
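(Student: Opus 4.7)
The plan is to analyze the Hilbert series $F(t):=\sum_{d\geq 0}H_I(d)t^d$ as a rational function and then extract the polynomial behavior from its Taylor expansion.

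First, I would show by induction on $s$ that the Hilbert series of every finitely generated graded $S$-module $M$ has the form $F_M(t)=g(t)/(1-t)^s$ with $g(t)\in\mathbb{Z}[t]$. The case $s=0$ is immediate, as $M$ is a finite-dimensional $K$-vector space. For the inductive step, the short exact sequence
$$
0\to (0:_M t_s)(-1)\longrightarrow M(-1)\stackrel{\cdot t_s}{\longrightarrow} M\longrightarrow M/t_s M\to 0
$$
gives the identity $(1-t)F_M(t)=F_{M/t_sM}(t)-t\,F_{(0:_M t_s)}(t)$. Since the two modules on the right are annihilated by $t_s$ and are thus finitely generated graded $K[t_1,\dots,t_{s-1}]$-modules, the inductive hypothesis applies and completes the step for $M=S/I$.

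After cancelling all common factors of $(1-t)$, write $F(t)=h(t)/(1-t)^{k'}$ with $h(1)\neq 0$, and put $h(t)=\sum_{i=0}^m c_i t^i$. Using the expansion $1/(1-t)^{k'}=\sum_{d\geq 0}\binom{d+k'-1}{k'-1}t^d$ one obtains, for $d\geq m$,
$$
H_I(d)=\sum_{i=0}^m c_i\binom{d-i+k'-1}{k'-1},
$$
which is a polynomial in $d$ of degree $k'-1$ with leading coefficient $h(1)/(k'-1)!\neq 0$. The degenerate case $k'=0$ forces $H_I(d)=0$ for $d\gg 0$, matching the zero polynomial under the convention that it has degree $-1$.

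The main obstacle is the identification $k'=k$, i.e. showing that the order of the pole of $F(t)$ at $t=1$ equals the Krull dimension of $S/I$. The standard route is graded Noether normalization: one chooses algebraically independent homogeneous elements $y_1,\dots,y_k\in S$ so that $S/I$ is a finitely generated graded module over $A=K[y_1,\dots,y_k]$. Its Hilbert series is then a non-negative integer combination of shifts of $1/\prod_{i=1}^k(1-t^{\deg y_i})$, forcing the reduced expression $h(t)/(1-t)^{k'}$ to satisfy $k'=k$ (equivalently, the growth of $H_I(d)$ is of polynomial order exactly $k-1$). Combined with the preceding paragraph, this furnishes a polynomial $h_I(x)\in\mathbb{Q}[x]$ of degree $k-1$ with $H_I(d)=h_I(d)$ for $d\gg 0$.
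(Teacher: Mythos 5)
The paper itself offers no proof of this statement: it is quoted from Bruns--Herzog \cite[Theorem~4.1.3]{BHer}, where it is deduced from dimension theory, so your argument has to stand on its own. Its first two steps do: the induction on $s$ via the exact sequence $0\to(0:_M t_s)(-1)\to M(-1)\stackrel{t_s}{\to}M\to M/t_sM\to 0$ correctly gives $F_M(t)=g(t)/(1-t)^s$, and the extraction, from the reduced form $h(t)/(1-t)^{k'}$ with $h(1)\neq 0$, of a polynomial of degree $k'-1$ with leading coefficient $h(1)/(k'-1)!$ (and the convention for $k'=0$) is correct.

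The gap is precisely in the step you call the main obstacle, the identification $k'=k$. After a graded Noether normalization $A=K[y_1,\dots,y_k]\subseteq S/I$, it is \emph{not} true in general that the Hilbert series of a finitely generated graded $A$-module is a nonnegative integer combination of shifts of $1/\prod_{i}(1-t^{\deg y_i})$; such an expression would come from a filtration by shifted copies of $A$, i.e.\ from freeness over $A$, which for $S/I$ holds exactly in the Cohen--Macaulay case. A concrete counterexample: $S/I=K[t_1,t_2]/(t_1^2,t_1t_2)$ with $A=K[t_2]$ has Hilbert series $\frac{1}{1-t}+t=\frac{1+t-t^2}{1-t}$, and $1+t-t^2$ is not a nonnegative combination of powers of $t$. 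So, as written, the equality of the pole order with $\dim S/I$ is not established. The conclusion is true and your route is repairable by the standard two-sided estimate: a graded surjection $\bigoplus_j A(-f_j)\twoheadrightarrow S/I$ coming from module generators gives $H_I(d)\leq\sum_j H_A(d-f_j)$, whence $k'\leq k$, while the inclusion $A\subseteq S/I$ gives $H_A(d)\leq H_I(d)$, whence $k'\geq k$. For this last inequality beware that if some $\deg y_i>1$ then $H_A$ is only a quasi-polynomial and can vanish for infinitely many $d$, so compare instead the partial sums $\sum_{d\leq D}H_A(d)\leq\sum_{d\leq D}H_I(d)$, which grow like $D^{k}$ and $D^{k'}$ respectively; alternatively, extend scalars to an infinite field (which does not change $H_I$) and choose the $y_i$ linear, so that $H_A(d)=\binom{d+k-1}{k-1}$ and the comparison is immediate.
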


If $f\in S$, the {\it quotient ideal\/} of $I$ with
respect to $f$ is given by $(I\colon f)=\{h\in S\vert\, hf\in I\}$.
The element $f$ is called a {\it zero-divisor\/} of $S/I$ if there is
$\overline{0}\neq \overline{a}\in S/I$ such that
$f\overline{a}=\overline{0}$, and $f$ is called {\it regular} on
$S/I$ if $f$ is not a zero-divisor. Notice that $f$ is a zero-divisor if
and only if $(I\colon f)\neq I$. An associated prime of $I$ is a prime
ideal $\mathfrak{p}$ of $S$ of the form $\mathfrak{p}=(I\colon f)$
for some $f$ in $S$.
 
\begin{theorem}{\cite[Lemma~2.1.19,
Corollary~2.1.30]{monalg-rev}}\label{zero-divisors} If $I$ is an
ideal of $S$ and   
$I=\mathfrak{q}_1\cap\cdots\cap\mathfrak{q}_m$ is 
an irredundant primary decomposition with ${\rm
rad}(\mathfrak{q}_i)=\mathfrak{p}_i$, then the set of zero-divisors
$\mathcal{Z}(S/I)$  of $S/I$ is given by
$$
\mathcal{Z}(S/I)=\bigcup_{i=1}^m\mathfrak{p}_i,
$$
and $\mathfrak{p}_1,\ldots,\mathfrak{p}_m$ are the associated primes of
$I$. 
\end{theorem}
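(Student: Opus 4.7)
The plan is to establish both assertions by working directly with the given irredundant primary decomposition $I=\mathfrak{q}_1\cap\cdots\cap\mathfrak{q}_m$, where each $\mathfrak{q}_i$ is $\mathfrak{p}_i$-primary.

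For the equality $\mathcal{Z}(S/I)=\bigcup_i\mathfrak{p}_i$, I would first show $\bigcup_i\mathfrak{p}_i\subseteq\mathcal{Z}(S/I)$. By irredundancy of the decomposition, for each $i$ one can choose $g_i\in\bigcap_{j\ne i}\mathfrak{q}_j$ with $g_i\notin\mathfrak{q}_i$. Given $f\in\mathfrak{p}_i=\mathrm{rad}(\mathfrak{q}_i)$, let $n\geq 1$ be the least integer with $f^n g_i\in\mathfrak{q}_i$; such an $n$ exists because $f$ lies in the radical. Setting $h:=f^{n-1}g_i$, one has $h\in\mathfrak{q}_j$ for every $j\ne i$ while $h\notin\mathfrak{q}_i$, so $h\notin I$, yet $fh=f^ng_i\in\bigcap_j\mathfrak{q}_j=I$. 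Therefore $h\in(I\colon f)\setminus I$, witnessing that $f$ is a zero-divisor. For the reverse inclusion, if $f\in\mathcal{Z}(S/I)$ choose $h\notin I$ with $fh\in I$. Since $h\notin I$, there is some index $k$ with $h\notin\mathfrak{q}_k$; primariness of $\mathfrak{q}_k$ applied to $fh\in\mathfrak{q}_k$ forces $f\in\mathrm{rad}(\mathfrak{q}_k)=\mathfrak{p}_k$.

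For the identification of the $\mathfrak{p}_i$'s with the associated primes, I would argue in two stages. To show each $\mathfrak{p}_i$ is associated, I would exhibit an $h\in S$ with $(I\colon h)=\mathfrak{p}_i$. The previous construction provides good candidates: for a suitable $f\in\mathfrak{p}_i$ and the least $n$ with $f^ng_i\in\mathfrak{q}_i$, the element $h=f^{n-1}g_i$ satisfies $\mathfrak{p}_i\subseteq (I\colon h)$, and the reverse containment follows from the primariness of $\mathfrak{q}_i$ together with the fact that $h\notin\mathfrak{q}_i$. Localizing at $\mathfrak{p}_i$ (where $\mathfrak{q}_j S_{\mathfrak{p}_i}=S_{\mathfrak{p}_i}$ for $j\ne i$ and $\mathfrak{q}_i S_{\mathfrak{p}_i}$ remains $\mathfrak{p}_i S_{\mathfrak{p}_i}$-primary) makes this cleanest. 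Conversely, any associated prime $\mathfrak{p}=(I\colon f)$ consists of zero-divisors of $S/I$, so by the first part $\mathfrak{p}\subseteq\bigcup_i\mathfrak{p}_i$, and prime avoidance gives $\mathfrak{p}\subseteq\mathfrak{p}_k$ for some $k$; matching $\mathfrak{p}$ with $\mathfrak{p}_k$ is then achieved by tracking minimality in the decomposition.

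The main obstacle is the first step of the second assertion: realizing each $\mathfrak{p}_i$ precisely as an annihilator $(I\colon h)$, not merely as a subset of the zero-divisors. The ``$\subseteq$'' direction is immediate, but the reverse ``$\supseteq$'' inclusion requires leveraging the primary structure of $\mathfrak{q}_i$ carefully, and the cleanest path is to reduce to the local case via localization at $\mathfrak{p}_i$, where the other primary components vanish and the statement becomes a routine fact about primary ideals in a local ring.
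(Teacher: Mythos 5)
The paper does not prove this statement at all---it is quoted from \cite[Lemma~2.1.19, Corollary~2.1.30]{monalg-rev}---so your proposal can only be measured against the standard argument. Your first part is correct and complete: the choice of $g_i\in\bigcap_{j\neq i}\mathfrak{q}_j\setminus\mathfrak{q}_i$ and of the least $n$ with $f^ng_i\in\mathfrak{q}_i$ gives a clean proof that $\bigcup_i\mathfrak{p}_i\subseteq\mathcal{Z}(S/I)$, and the reverse inclusion via primariness is right. The gap is in the second assertion, at exactly the point you flag as ``the main obstacle.'' You claim that for a \emph{suitable single} $f\in\mathfrak{p}_i$ the element $h=f^{n-1}g_i$ satisfies $\mathfrak{p}_i\subseteq(I\colon f^{n-1}g_i)$, but no argument is given that such an $f$ exists, and this is not a harmless detail: the existence of such an $f$ is equivalent to the nonvanishing of the socle, i.e.\ to $\bigl((\mathfrak{q}_i\colon g_i)\colon\mathfrak{p}_i\bigr)\supsetneq(\mathfrak{q}_i\colon g_i)$, which is precisely the nontrivial Noetherian input of the theorem (and can fail without Noetherian hypotheses). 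Deferring it to ``a routine fact about primary ideals in a local ring'' after localizing restates the problem rather than solving it, and you also do not carry out the descent from $S_{\mathfrak{p}_i}$ back to an honest $h\in S$ with $(I\colon h)=\mathfrak{p}_i$. The standard fix replaces powers of one element by powers of the ideal: since $\mathfrak{p}_i$ is finitely generated, $\mathfrak{p}_i^N\subseteq\mathfrak{q}_i$ for some $N$; take $n$ minimal with $\mathfrak{p}_i^{\,n}g_i\subseteq I$ and pick $h\in\mathfrak{p}_i^{\,n-1}g_i\setminus I$; then $h\in\bigcap_{j\neq i}\mathfrak{q}_j$, $h\notin\mathfrak{q}_i$, so $\mathfrak{p}_i\subseteq(I\colon h)=(\mathfrak{q}_i\colon h)\subseteq\mathfrak{p}_i$. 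No localization is needed.

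The converse direction of the second assertion is also not established by your sketch. From $\mathfrak{p}=(I\colon f)\subseteq\mathcal{Z}(S/I)$ and prime avoidance you only get $\mathfrak{p}\subseteq\mathfrak{p}_k$ for some $k$; ``tracking minimality in the decomposition'' does not produce the reverse inclusion, and nothing in your argument rules out a priori that $\mathfrak{p}$ is strictly contained in every $\mathfrak{p}_i$ containing it. The standard completion is direct: $\mathfrak{p}=(I\colon f)=\bigcap_j(\mathfrak{q}_j\colon f)\supseteq\bigcap_{f\notin\mathfrak{q}_j}\mathfrak{q}_j$, so the prime $\mathfrak{p}$ contains some $\mathfrak{q}_{j_0}$ with $f\notin\mathfrak{q}_{j_0}$ and hence contains $\mathfrak{p}_{j_0}$; on the other hand $(I\colon f)\subseteq(\mathfrak{q}_{j_0}\colon f)\subseteq\mathfrak{p}_{j_0}$ by primariness, giving $\mathfrak{p}=\mathfrak{p}_{j_0}$. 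With these two repairs your outline becomes the usual textbook proof; as written, both halves of the ``associated primes'' statement rest on unproved claims.
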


In the introduction we defined the regularity of the coordinate
ring of a finite set in a projective space. This notion is defined 
for any graded ideal.

\begin{definition}
The {\it regularity\/} of $S/I$, denoted 
${\rm reg}(S/I)$, is the least integer $r\geq 0$ such that
$H_I(d)$ is equal to $h_I(d)$ for $d\geq r$.  
\end{definition}

If $I\subset S$ is Cohen-Macaulay 
and $\dim(S/I)=1$, then ${\rm reg}(S/I)$
is the Castelnuovo-Mumford regularity of $S/I$ in the sense of
\cite{eisenbud-syzygies}. If $\dim(S/I)=0$, then ${\rm reg}(S/I)$ is the least positive
integer $d$ such that $\mathfrak{m}^d\subset I$.

\paragraph{\bf The footprint of an ideal} Let  $\prec$ be a monomial
order on $S$ and let $(0)\neq I\subset S$ be an ideal. If $f$ is a non-zero 
polynomial in $S$, then one can write 
$$
f=\lambda_1t^{\alpha_1}+\cdots+\lambda_rt^{\alpha_r},
$$
with $\lambda_i\in K^*$ for all $i$ and 
$t^{\alpha_1}\succ\cdots\succ t^{\alpha_r}$. The {\it leading
monomial\/} $t^{\alpha_1}$ of $f$ 
is denoted by ${\rm in}_\prec(f)$. The {\it initial ideal\/} of $I$, denoted by
${\rm in}_\prec(I)$,  is the monomial ideal given by 
$${\rm in}_\prec(I)=(\{{\rm in}_\prec(f)|\, f\in I\}).
$$ 

A monomial $t^a$ is called a 
{\it standard monomial\/} of $S/I$, with respect 
to $\prec$, if $t^a$ is not the leading monomial of any polynomial in $I$.
The set of standard monomials, denoted $\Delta_\prec(I)$, is called the {\it
footprint\/} of $S/I$. The image of the standard polynomials of
degree $d$, under the canonical map $S\mapsto S/I$, 
$x\mapsto \overline{x}$, is equal to $S_d/I_d$, and the 
image of $\Delta_\prec(I)$ is a basis of $S/I$ as a $K$-vector space
(see \cite[Proposition~3.3.13]{monalg-rev}). In
particular, if $I$ is graded, then $H_I(d)$ is the number of standard
monomials of degree $d$. 

A subset $\mathcal{G}=\{g_1,\ldots, g_r\}$ of $I$ is called a 
{\it Gr\"obner basis\/} of $I$ if $${\rm
in}_\prec(I)=({\rm in}_\prec(g_1),\ldots,{\rm in}_\prec(g_r)).$$

\begin{lemma}{\cite[p.~2]{carvalho}}\label{nov6-15} Let $I\subset S$ be an ideal generated by
$\mathcal{G}=\{g_1,\ldots,g_r\}$, then
$$
\Delta_\prec(I)\subset\Delta_\prec({\rm
in}_\prec(g_1),\ldots,{\rm in}_\prec(g_r)),
$$
with equality if $\mathcal{G}$ is a Gr\"obner basis.
\end{lemma}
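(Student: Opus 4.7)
The plan is to observe that both sides of the claimed inclusion can be reformulated as complements (in the monoid of monomials) of monomial ideals, at which point the inclusion becomes a simple containment of those monomial ideals. Concretely, for any ideal $J\subset S$, the set $\Delta_\prec(J)$ is exactly the set of monomials of $S$ that do not lie in ${\rm in}_\prec(J)$: by the definition of a standard monomial, $t^a\in\Delta_\prec(J)$ iff $t^a$ is not the leading monomial of any element of $J$, and since ${\rm in}_\prec(J)$ is a monomial ideal generated by all such leading monomials, $t^a\in{\rm in}_\prec(J)$ iff $t^a$ is divisible by some leading monomial of an element of $J$, iff $t^a$ itself is the leading monomial of some element of $J$.

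With this reformulation in hand, the first step is to prove the inclusion
$$
\bigl({\rm in}_\prec(g_1),\ldots,{\rm in}_\prec(g_r)\bigr)\ \subset\ {\rm in}_\prec(I).
$$
This is immediate because each $g_i\in I$, so ${\rm in}_\prec(g_i)\in{\rm in}_\prec(I)$, and ${\rm in}_\prec(I)$ is closed under taking multiples. Taking complements in the set of monomials (which reverses the inclusion) yields
$$
\Delta_\prec(I)\ \subset\ \Delta_\prec\bigl({\rm in}_\prec(g_1),\ldots,{\rm in}_\prec(g_r)\bigr),
$$
as desired. Equivalently, if $t^a$ is divisible by some ${\rm in}_\prec(g_i)$, write $t^a=t^b\cdot{\rm in}_\prec(g_i)$; then $t^b g_i\in I$ has leading monomial $t^a$, so $t^a\in{\rm in}_\prec(I)$ and hence $t^a\notin\Delta_\prec(I)$.

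For the equality when $\mathcal{G}$ is a Gr\"obner basis, the definition of Gr\"obner basis supplies exactly the reverse containment: ${\rm in}_\prec(I)=({\rm in}_\prec(g_1),\ldots,{\rm in}_\prec(g_r))$. Taking complements in the set of monomials gives $\Delta_\prec(I)=\Delta_\prec({\rm in}_\prec(g_1),\ldots,{\rm in}_\prec(g_r))$, completing the proof. There is no real obstacle here; the only point requiring attention is the bookkeeping that $\Delta_\prec(J)$ is the complement of ${\rm in}_\prec(J)$ in the monoid of monomials, which follows directly from the definition and the fact that ${\rm in}_\prec(J)$ is a monomial ideal.
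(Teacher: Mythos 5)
Your proof is correct and is essentially the paper's argument: the key step in both is that a multiple $t^b\,{\rm in}_\prec(g_i)$ is the leading monomial of $t^bg_i\in I$, hence lies in ${\rm in}_\prec(I)$, and the equality case is the definition of a Gr\"obner basis. Your packaging via complements of monomial ideals (using that $\Delta_\prec(J)$ is the complement of ${\rm in}_\prec(J)$ in the set of monomials) is just a rephrasing of the paper's contradiction argument, which you in fact also state verbatim in your ``equivalently'' remark.
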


\begin{proof} Take $t^a$ in $\Delta_\prec(I)$. If
$t^a\notin\Delta_\prec({\rm
in}_\prec(g_1),\ldots,{\rm in}_\prec(g_r))$, then $t^a=t^c\,{\rm
in}_\prec(g_i)$ for some $i$ 
and some $t^c$. Thus $t^a={\rm
in}_\prec(t^c g_i)$, with $t^c g_i$ in $I$, a contradiction. 
The second statement holds by the definition of a Gr\"obner basis. 
\end{proof}

\begin{theorem}{\rm(Division algorithm
\cite[Theorem~3, p.~63]{CLO})} 
\label{division-algo}\index{division algorithm} 
If $f,g_1,\ldots,g_r$ are polynomials in $S$, then 
$f$ can be written as 
$$f=a_1g_1+\cdots+a_rg_r+h,$$
where 
$a_i,h\in S$ and either $h=0$ or $h\neq 0$ and no term of $h$ 
is divisible by one of the initial monomials\/ ${\rm
in}_\prec(g_1),\ldots,{\rm in}_\prec(g_r)$. 
Furthermore 
if $a_ig_i\neq 0$, then ${\rm in}_\prec(f)\geq{\rm in}_\prec(a_ig_i)$.
\end{theorem}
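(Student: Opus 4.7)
The plan is to construct $a_1,\ldots,a_r$ and $h$ by an iterative procedure that generalizes single-variable polynomial division. At each step I will maintain the invariant
$$f = a_1 g_1 + \cdots + a_r g_r + p + h,$$
where $p$ is a ``working polynomial'' and every term appearing in $h$ fails to be divisible by any of ${\rm in}_\prec(g_1),\ldots,{\rm in}_\prec(g_r)$. I initialize $a_i = 0$ for all $i$, $p = f$, and $h = 0$, so the invariant holds trivially.

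The iterative step proceeds as follows. If $p = 0$, I stop. Otherwise I inspect ${\rm in}_\prec(p)$. If there exists an index $i$ with ${\rm in}_\prec(g_i)$ dividing ${\rm in}_\prec(p)$, I write ${\rm in}_\prec(p) = \lambda t^c\, {\rm in}_\prec(g_i)$ with $\lambda \in K^*$, and replace $a_i$ by $a_i + \lambda t^c$ and $p$ by $p - \lambda t^c g_i$. The leading term of $p$ cancels, so ${\rm in}_\prec(p)$ strictly decreases (in the order $\prec$) while the invariant is preserved. If, on the other hand, no such $i$ exists, I move the leading term $\lambda_1 t^{\alpha_1}$ of $p$ into $h$: replace $h$ by $h + \lambda_1 t^{\alpha_1}$ and $p$ by $p - \lambda_1 t^{\alpha_1}$. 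Again the invariant is preserved, the newly appended term of $h$ is, by assumption, not divisible by any ${\rm in}_\prec(g_i)$, and ${\rm in}_\prec(p)$ strictly decreases.

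Since a monomial order is a well-order on the set of monomials of $S$, no infinite strictly $\prec$-descending chain exists; hence the procedure must terminate, necessarily with $p = 0$. At that point the invariant yields the desired expression $f = \sum_i a_i g_i + h$ with the required property of $h$. For the final inequality, note that each summand $\lambda t^c$ appended to $a_i$ during the algorithm satisfies ${\rm in}_\prec(\lambda t^c g_i) = \lambda t^c\,{\rm in}_\prec(g_i)$, which is precisely the current leading monomial of $p$ at the moment of appending. Because ${\rm in}_\prec(p)$ starts at ${\rm in}_\prec(f)$ and only decreases in $\prec$, every such summand has leading monomial $\preceq {\rm in}_\prec(f)$; taking the $\prec$-maximum over all summands contributing to $a_i g_i$ gives ${\rm in}_\prec(a_i g_i) \preceq {\rm in}_\prec(f)$ whenever $a_i g_i \neq 0$.

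The only nontrivial point is the termination argument, which relies crucially on the well-ordering property of $\prec$; the rest is bookkeeping to verify that the invariant and the leading-monomial bound are preserved under each of the two types of update.
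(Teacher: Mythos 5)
Your argument is correct and is essentially the standard division-algorithm proof from Cox--Little--O'Shea, which is exactly the source the paper cites for this theorem without reproducing a proof: termination from the well-ordering of the monomials under $\prec$, and the bound ${\rm in}_\prec(a_ig_i)\preceq{\rm in}_\prec(f)$ from the fact that the leading monomial of the working polynomial $p$ starts at ${\rm in}_\prec(f)$ and strictly decreases. The only cosmetic point is that, since ${\rm in}_\prec(g_i)$ denotes the leading \emph{monomial}, the cancellation step should take $\lambda$ to be the leading coefficient of $p$ divided by that of $g_i$; this does not affect the argument.
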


\begin{lemma}\label{regular-init} Let
$\mathcal{G}=\{g_1,\ldots,g_r\}$ be a Gr\"obner basis
of $I$. If for some $i$, the variable $t_i$ does not divides ${\rm
in}_\prec(g_j)$ for all $j$, then $t_i$ is a regular element on $S/I$.
\end{lemma}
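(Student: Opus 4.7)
The plan is to show that if $t_i f \in I$ for some $f \in S$, then $f \in I$, by reducing $f$ to its normal form modulo the Gr\"obner basis and deriving a contradiction with the hypothesis on the initial monomials.

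First I would handle trivial cases and set up the key reduction. Assume $t_i f \in I$. If $f = 0$, there is nothing to prove. Otherwise, using the division algorithm (Theorem~\ref{division-algo}) against $\mathcal{G}$, write $f = \sum_j a_j g_j + h$ where $h$ is either zero or a nonzero standard polynomial, i.e.\ a $K$-linear combination of elements of $\Delta_\prec(I)$. Since $\sum_j a_j g_j \in I$, we have $t_i h = t_i f - t_i \sum_j a_j g_j \in I$. If $h = 0$ then $f \in I$ and we are done, so the goal becomes to rule out the case $h \neq 0$.

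The crucial step is the following divisibility observation: since $h$ is a $K$-linear combination of standard monomials, its leading monomial $t^b := {\rm in}_\prec(h)$ is itself a standard monomial, hence $t^b \notin {\rm in}_\prec(I)$. On the other hand, $t_i h \in I$ forces $t_i t^b = {\rm in}_\prec(t_i h) \in {\rm in}_\prec(I) = ({\rm in}_\prec(g_1),\ldots,{\rm in}_\prec(g_r))$, so some ${\rm in}_\prec(g_j)$ divides $t_i t^b$. Because $t_i$ does not divide ${\rm in}_\prec(g_j)$ by hypothesis, comparing exponents variable by variable shows that ${\rm in}_\prec(g_j)$ already divides $t^b$. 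This contradicts $t^b \in \Delta_\prec(I)$, so $h = 0$ and therefore $f \in I$.

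The argument is largely bookkeeping with leading monomials; the main subtlety is the exponent-comparison step that extracts divisibility of $t^b$ from divisibility of $t_i t^b$, and I would be careful to justify it cleanly. The hypothesis that $\mathcal{G}$ is a Gr\"obner basis enters exactly to identify ${\rm in}_\prec(I)$ with $({\rm in}_\prec(g_1),\ldots,{\rm in}_\prec(g_r))$ and to guarantee that the remainder $h$ produced by the division algorithm is a genuine standard polynomial.
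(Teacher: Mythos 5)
Your proposal is correct and follows essentially the same route as the paper's proof: reduce $f$ modulo the Gr\"obner basis via the division algorithm, observe that $t_i\,{\rm in}_\prec(h)\in{\rm in}_\prec(I)$ if the remainder $h$ were nonzero, and use the hypothesis on $t_i$ to conclude ${\rm in}_\prec(h)\in{\rm in}_\prec(I)$, contradicting that $h$ is standard. Your version merely spells out the exponent-comparison step that the paper leaves implicit.
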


\begin{proof} Assume that $t_if\in I$. By the division algorithm we
can write $f=g+h$, where $g\in I$ and $h$ is $0$ or a standard
polynomial. It suffices to show that $h=0$. If $h\neq 0$, then $t_i{\rm
in}_\prec(h)\in{\rm in}_\prec(I)$. Hence, using our hypothesis on
$t_i$, 
we get ${\rm
in}_\prec(h)\in{\rm in}_\prec(I)$, a contradiction. 
\end{proof}

This lemma tells us that if $t_i$ is a zero-divisor of $S/I$ for all
$i$, then any variable $t_i$ must occur in an  
initial monomial ${\rm in}_\prec(g_j)$ for some $j$.

\begin{theorem}{\rm(Macaulay \cite[Corollary~3.3.15]{monalg-rev})}\label{hilb=init}
If $I$ is a graded ideal of $S$, then $S/I$ and $S/{\rm in}_\prec(I)$ 
have the same Hilbert function and the same degree and regularity. 
\end{theorem}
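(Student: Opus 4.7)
The plan is to deduce everything from the standard-monomial description of $S/I$ together with the observation that, for graded $I$, taking initial ideals preserves the grading degree-by-degree. Concretely, I would proceed in three steps.

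First, I would show that the two ideals have identical footprints, i.e.\ $\Delta_\prec(I)=\Delta_\prec({\rm in}_\prec(I))$. This is essentially tautological: a monomial $t^a$ fails to be standard for $I$ precisely when $t^a={\rm in}_\prec(f)$ for some $f\in I$, which is precisely the statement $t^a\in {\rm in}_\prec(I)$; and since ${\rm in}_\prec(I)$ is a monomial ideal, $t^a\in {\rm in}_\prec(I)$ is in turn equivalent to $t^a$ not being standard for ${\rm in}_\prec(I)$.

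Second, I would verify that the number of standard monomials of each degree $d$ computes both $H_I(d)$ and $H_{{\rm in}_\prec(I)}(d)$. The key point is that $I$ is graded, so for $f\in I$ each homogeneous component lies in $I$, and the leading monomial of each component has the same degree as that component. Consequently ${\rm in}_\prec(I)$ is also a graded ideal, and the images of the standard monomials of degree $d$ form a $K$-basis of $S_d/I_d$ (this is the graded refinement of the fact, recalled just above the theorem, that $\Delta_\prec(I)$ maps to a $K$-basis of $S/I$; the same argument applied to the monomial ideal ${\rm in}_\prec(I)$ gives a basis of $S_d/{\rm in}_\prec(I)_d$). Combining with step one,
\begin{equation*}
H_I(d)=|\Delta_\prec(I)\cap S_d|=|\Delta_\prec({\rm in}_\prec(I))\cap S_d|=H_{{\rm in}_\prec(I)}(d)
\end{equation*}
for every $d\geq 0$.

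Finally, the remaining invariants are determined by the Hilbert function. Equality of Hilbert functions for all large $d$ forces equality of the Hilbert polynomials $h_I(x)=h_{{\rm in}_\prec(I)}(x)$, hence equality of their degrees and leading coefficients, from which $\deg(S/I)=\deg(S/{\rm in}_\prec(I))$ follows by the definition recalled in Section~\ref{intro-section}. Likewise the regularity, defined as the least $r\geq 0$ with $H_I(d)=h_I(d)$ for all $d\geq r$, depends only on the pair $(H_I,h_I)$, so it coincides for the two rings. The only delicate ingredient is the graded refinement in step two; once one is comfortable that, for graded $I$, the division algorithm of Theorem~\ref{division-algo} produces a remainder of the same degree as the input, this is routine, and the rest of the argument is purely formal.
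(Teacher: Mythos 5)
Your proof is correct: the identification $\Delta_\prec(I)=\Delta_\prec({\rm in}_\prec(I))$, the fact that the standard monomials of degree $d$ give a $K$-basis of $S_d/I_d$ (using a homogeneous Gr\"obner basis, since $I$ is graded), and the observation that both the degree and the regularity as defined in this paper are determined by the Hilbert function together yield the statement. The paper itself gives no proof---it quotes the result as Macaulay's theorem from \cite[Corollary~3.3.15]{monalg-rev}---and your argument is exactly the standard one behind that citation, matching the ingredients the paper already recalls (the basis property of the footprint and Lemma~\ref{nov6-15}); the only point worth keeping in mind is that the final step works because ``regularity'' here means agreement of $H_I$ with $h_I$, not Castelnuovo--Mumford regularity in general.
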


\paragraph{\bf Vanishing ideals of finite sets} 
The {\it projective space\/} of 
dimension $s-1$
over the field $K$, denoted 
$\mathbb{P}_K^{s-1}$, or simply $\mathbb{P}^{s-1}$, is the quotient space 
$$(K^{s}\setminus\{0\})/\sim $$
where two points $\alpha$, $\beta$ in $K^{s}\setminus\{0\}$ 
are equivalent under $\sim$ if $\alpha=c\beta$ for some $c\in K^*$. It is usual to denote the 
equivalence class of $\alpha$ by $[\alpha]$. 

For a given a subset $\mathbb{X}\subset\mathbb{P}^{s-1}$ define 
$I(\mathbb{X})$, the {\it vanishing ideal\/} of $\mathbb{X}$, 
as the ideal generated by the homogeneous polynomials 
in $S$ that vanish at all points of $\mathbb{X}$, and 
given a graded ideal $I\subset S$ 
define its {\it zero set\/} relative to $\mathbb{X}$ as  
$$V_\mathbb{X}(I)=\left\{[\alpha]\in \mathbb{X}\vert\, 
f(\alpha)=0,\, 
\forall f\in I\, \mbox{ homogeneous} \right\}.
$$ 
In particular, if $f\in S$ is homogeneous, the zero set
$V_\mathbb{X}(f)$ of $f$ is the set of all $[\alpha]\in \mathbb{X}$
such that $f(\alpha)=0$, that is $V_\mathbb{X}(f)$ is the set of zeros
of $f$ in $\mathbb{X}$. 

\begin{lemma}\label{primdec-ix-a} Let $\mathbb{X}$ be a finite
subset of $\mathbb{P}^{s-1}$, let $[\alpha]$ be a point in
$\mathbb{X}$, 
with $\alpha=(\alpha_1,\ldots,\alpha_s)$
and $\alpha_k\neq 0$ for some $k$, and let
$I_{[\alpha]}$ be the vanishing ideal of $[\alpha]$. Then $I_{[\alpha]}$ is a prime ideal, 
\begin{equation*}
I_{[\alpha]}=(\{\alpha_kt_i-\alpha_it_k\vert\, k\neq i\in\{1,\ldots,s\}),\
\deg(S/I_{[\alpha]})=1,\,  
\end{equation*}
${\rm ht}(I_{[\alpha]})=s-1$, 
and $I(\mathbb{X})=\bigcap_{[\beta]\in{\mathbb{X}}}I_{[\beta]}$ is the primary
decomposition of $I(\mathbb{X})$. 
\end{lemma}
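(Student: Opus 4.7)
The plan is to first prove the identity $I_{[\alpha]}=J$, where $J$ denotes the ideal $(\{\alpha_kt_i-\alpha_it_k\colon i\neq k\})$ generated by the displayed linear forms; all the other assertions follow quickly from this. Each generator of $J$ vanishes at $\alpha$, so the inclusion $J\subset I_{[\alpha]}$ is immediate. For the reverse inclusion I would introduce the evaluation-style $K$-algebra homomorphism
$$
\varphi\colon S\longrightarrow K[t_k],\qquad \varphi(t_i)=(\alpha_i/\alpha_k)\,t_k\ \ (i\neq k),\quad \varphi(t_k)=t_k,
$$
which is clearly surjective and annihilates every generator of $J$. Using the relations $\alpha_kt_i\equiv\alpha_it_k\pmod J$ to rewrite any element of $S$ modulo $J$ as a polynomial in $t_k$ alone, I would check that $J=\ker\varphi$, so $S/J\cong K[t_k]$; in particular $J$ is prime of height $s-1$.

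Once the isomorphism $S/J\cong K[t_k]$ is in hand, the inclusion $I_{[\alpha]}\subset J$ becomes the observation that for homogeneous $f$ of degree $d$ one has $\varphi(f)=\alpha_k^{-d}f(\alpha)\,t_k^d$ by homogeneity, which vanishes whenever $f(\alpha)=0$. This gives $I_{[\alpha]}=J$, and the isomorphism $S/I_{[\alpha]}\cong K[t_k]$ yields $H_{I_{[\alpha]}}(d)=1$ for all $d\geq 0$, hence $\deg(S/I_{[\alpha]})=1$ and ${\rm ht}(I_{[\alpha]})=s-1$.

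For the primary decomposition $I(\mathbb{X})=\bigcap_{[\beta]\in\mathbb{X}}I_{[\beta]}$, the containment $\subset$ is obvious; for $\supset$ I would use that intersections of graded ideals are graded, so any element of the intersection decomposes into homogeneous pieces, each of which vanishes at every point of $\mathbb{X}$ and hence lies in $I(\mathbb{X})$. Each $I_{[\beta]}$ is prime, and hence primary; irredundancy follows from the fact that the $I_{[\beta]}$ are pairwise distinct primes of the common height $s-1$, so none contains the intersection of the others. The only real technical point is identifying $I_{[\alpha]}$ with the ideal $J$ of linear forms; once the evaluation homomorphism $\varphi$ is available, the remaining verifications reduce to bookkeeping.
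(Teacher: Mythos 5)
Your proposal is correct. The paper itself gives no proof of this lemma: it appears in the preliminaries, prefaced by the remark that all results of that section are well known, so there is no argument in the paper to compare against. Your route—showing $J=(\{\alpha_kt_i-\alpha_it_k\,\vert\, i\neq k\})$ equals $\ker\varphi$ for the evaluation homomorphism $\varphi\colon S\to K[t_k]$, deducing $S/I_{[\alpha]}\cong K[t_k]$ and hence primality, height $s-1$, Hilbert function identically $1$, and then the (ir)redundant intersection statement—is a standard, complete verification; the homogeneity computation $\varphi(f)=\alpha_k^{-d}f(\alpha)t_k^d$ is exactly the right point to get $I_{[\alpha]}\subset J$. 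The only detail left implicit is that distinct points $[\alpha]\neq[\beta]$ give distinct ideals, which is immediate because some generator $\alpha_kt_i-\alpha_it_k$ of $I_{[\alpha]}$ does not vanish at $\beta$; with that, irredundancy follows by prime avoidance among height-$(s-1)$ primes, as you indicate.
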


If $\mathbb{X}$ is a subset of $\mathbb{P}^{s-1}$ it is usual to
denote the Hilbert function of $S/I(\mathbb{X})$ by $H_\mathbb{X}$. 

\begin{proposition}{\rm\cite{geramita-cayley-bacharach}}\label{hilbert-function-dim=coro} 
If $\mathbb{X}\subset\mathbb{P}^{s-1}$ is a finite set, then 
$$
1=H_\mathbb{X}(0)<H_\mathbb{X}(1)<\cdots<H_\mathbb{X}(r-1)<H_\mathbb{X}(d)=|\mathbb{X}|$$
for $d\geq r= {\rm reg}(S/I(\mathbb{X}))$.
\end{proposition}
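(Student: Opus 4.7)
The plan is to split the claim into three pieces: the uniform upper bound $H_\mathbb{X}(d)\le|\mathbb{X}|$, the eventual equality $H_\mathbb{X}(d)=|\mathbb{X}|$ for $d\ge r$, and the strict monotonicity $H_\mathbb{X}(i)<H_\mathbb{X}(i+1)$ for $0\le i<r$. The first two fall out quickly from earlier results; the third is the real content.

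First I would combine the primary decomposition $I(\mathbb{X})=\bigcap_{[\beta]\in\mathbb{X}}I_{[\beta]}$ from Lemma~\ref{primdec-ix-a} with additivity of degree (Proposition~\ref{additivity-of-the-degree}). Each $I_{[\beta]}$ is prime of height $s-1$ and degree $1$, so $\dim(S/I(\mathbb{X}))=1$ and $\deg(S/I(\mathbb{X}))=|\mathbb{X}|$. By Theorem~\ref{hilbert} the Hilbert polynomial is then the constant $|\mathbb{X}|$, and the definition of $r={\rm reg}(S/I(\mathbb{X}))$ immediately gives $H_\mathbb{X}(d)=|\mathbb{X}|$ for $d\ge r$. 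For the upper bound, the evaluation map ${\rm ev}_d\colon S_d\to K^{|\mathbb{X}|}$ defined in the introduction has kernel precisely $I(\mathbb{X})_d$, so $H_\mathbb{X}(d)\le|\mathbb{X}|$ for every $d\ge 0$, and $H_\mathbb{X}(0)=1$ since $I(\mathbb{X})$ has no nonzero constants.

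For the strict monotonicity I would argue contrapositively: if $H_\mathbb{X}(d)=H_\mathbb{X}(d+1)$ for some $d\ge 0$, then $H_\mathbb{X}(d)=|\mathbb{X}|$. Choose a linear form $h$ not vanishing on any point of $\mathbb{X}$; by Theorem~\ref{zero-divisors} such $h$ is a non-zero divisor on $S/I(\mathbb{X})$, so multiplication by $h$ gives an injection $(S/I(\mathbb{X}))_d\hookrightarrow (S/I(\mathbb{X}))_{d+1}$. Equality of dimensions makes it an isomorphism, and iterating yields $(S/I(\mathbb{X}))_{d+k}=h^k(S/I(\mathbb{X}))_d$ for every $k\ge 0$. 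Hence $H_\mathbb{X}$ is constant equal to $H_\mathbb{X}(d)$ from degree $d$ onward, and this stable value must be $|\mathbb{X}|$ by the first part. When $K$ is too small for a linear form to miss every point of $\mathbb{X}$, I would first extend scalars to an infinite field $L\supset K$; since the primes $I_{[\beta]}$ are generated by $K$-linear forms (Lemma~\ref{primdec-ix-a}), they remain prime and pairwise distinct in $L[t_1,\ldots,t_s]$, the extended ideal is still the vanishing ideal of $\mathbb{X}$ viewed in $\mathbb{P}^{s-1}(L)$, and the Hilbert function is preserved by this flat base change.

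The main obstacle is arranging a linear non-zero divisor over a small finite field, which forces the base-change step and the accompanying check that $I(\mathbb{X})$, its Hilbert function, and its regularity all behave well under $K\hookrightarrow L$. Once that is in hand the three parts assemble into the displayed chain of strict inequalities followed by stabilization at $|\mathbb{X}|$.
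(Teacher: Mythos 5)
The paper does not prove this proposition at all: it is quoted from \cite{geramita-cayley-bacharach}, so there is no in-paper argument to compare against. Judged on its own, your proof is correct and is essentially the standard argument behind the cited result: identify $\deg(S/I(\mathbb{X}))=|\mathbb{X}|$ and $\dim S/I(\mathbb{X})=1$ from the primary decomposition, get the eventual value $|\mathbb{X}|$ from the constant Hilbert polynomial, and obtain strict growth below the regularity from a linear non-zero-divisor, passing to an infinite extension field when $K$ is too small for such a form to exist (base change is flat, commutes with the finite intersection of the primes $I_{[\beta]}$, and preserves Hilbert functions, so this is legitimate). Two small points are worth making explicit. First, in the step ``iterating yields $(S/I)_{d+k}=h^k(S/I)_d$'': an isomorphism $\times h\colon (S/I)_d\to (S/I)_{d+1}$ in a single degree propagates to higher degrees only because $S/I(\mathbb{X})$ is standard graded, i.e.\ $A_{e+1}=A_1A_e$, so surjectivity of $\times h$ in degree $d$ forces $A_{d+2}=A_1A_{d+1}=A_1hA_d=hA_{d+1}$, and so on; combined with injectivity (non-zero-divisor) this gives the stabilization you use. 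Second, the statement implicitly assumes $\mathbb{X}\neq\emptyset$ (otherwise $H_\mathbb{X}(0)=1$ fails); this is harmless but should be said. With those remarks added, your three pieces assemble exactly as claimed: monotonicity plus ``one repetition forces stabilization at $|\mathbb{X}|$'' contradicts the minimality of $r={\rm reg}(S/I(\mathbb{X}))$ if the repetition occurs before degree $r$, giving the displayed chain of strict inequalities.
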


\begin{definition}\label{projectivetorus-def}\rm The set 
$\mathbb{T}=\{[(x_1,\ldots,x_s)]\in\mathbb{P}^{s-1}\vert\, x_i\in
K^*\, \forall\, i\}$ is called a {\it projective
torus\/}.
\end{definition}

\paragraph{\bf Projective Reed-Muller-type codes} 

Let $K=\mathbb{F}_q$ be a finite field, let $\mathbb{X}$ be a subset
of $\mathbb{P}^{s-1}$, and let $P_1,\ldots,P_m$ be a set of
representatives for the points of $\mathbb{X}$ with $m=|\mathbb{X}|$.
In this paragraph all results
are valid if we assume that $K$ is any field and that $\mathbb{X}$ is a finite subset of
$\mathbb{P}^{s-1}$ instead of assuming that $K$ is finite. However the
interesting case for coding theory is when $K$ is finite.

Fix a degree $d\geq 0$. 
For each $i$ there is $f_i\in S_d$ such that
$f_i(P_i)\neq 0$. Indeed suppose $P_i=[(a_1,\ldots,a_s)]$, there is at
least one $k$ in $\{1,\ldots,s\}$ such that $a_k\neq 0$. Setting
$f_i(t_1,\ldots,t_s)=t_k^d$ one has that $f_i\in S_d$ and
$f_i(P_i)\neq 0$. There is a $K$-linear map: 
\begin{equation*}
{\rm ev}_d\colon S_d=K[t_1,\ldots,t_s]_d\rightarrow K^{|\mathbb{X}|},\ \ \ \ \ 
f\mapsto
\left(\frac{f(P_1)}{f_1(P_1)},\ldots,\frac{f(P_m)}{f_m(P_m)}\right).
\end{equation*}

The map ${\rm ev}_d$ is called an {\it evaluation map}. The image of 
$S_d$ under ${\rm ev}_d$, denoted by  $C_\mathbb{X}(d)$, is called a {\it
projective Reed-Muller-type code\/} of degree $d$ over $\mathbb{X}$ 
\cite{duursma-renteria-tapia,GRT}. It is
also called an {\it evaluation code\/} associated to $\mathbb{X}$
\cite{gold-little-schenck}.

\begin{definition}\rm The {\it basic parameters} of the linear
code $C_\mathbb{X}(d)$ are its {\it length\/} $|\mathbb{X}|$, {\it
dimension\/} 
$\dim_K C_\mathbb{X}(d)$, and {\it minimum distance\/} 
$$
\delta_\mathbb{X}(d):=\min\{\|v\| 
\colon 0\neq v\in C_\mathbb{X}(d)\},
$$
where $\|v\|$ is the number of non-zero
entries of $v$. 
\end{definition}

\begin{lemma}\label{may21-15-1}
{\rm (a)} The map ${\rm ev}_d$ is well-defined, i.e., it is independent of
the set of representatives that we choose for the points of
$\mathbb{X}$. {\rm (b)} The basic parameters of the Reed-Muller-type code
$C_\mathbb{X}(d)$ are
independent of $f_1,\ldots,f_m$.
\end{lemma}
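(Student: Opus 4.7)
The plan is to handle both parts by exploiting the fact that a homogeneous polynomial of degree $d$ scales by $c^d$ when its input is scaled by $c$.

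For part (a), I would let $P_1,\ldots,P_m$ and $P_1',\ldots,P_m'$ be two choices of representatives for the points of $\mathbb{X}$, so $P_i'=c_iP_i$ for some $c_i\in K^*$. First I would check that the hypothesis $f_i(P_i)\ne 0$ is itself independent of the chosen representative: since $f_i$ is homogeneous of degree $d$, $f_i(c_iP_i)=c_i^df_i(P_i)$, so $f_i(P_i)=0$ iff $f_i(P_i')=0$. Next, for any $f\in S_d$, the $i$-th coordinate of $\mathrm{ev}_d(f)$ computed with the primed representatives is
\begin{equation*}
\frac{f(P_i')}{f_i(P_i')}=\frac{c_i^df(P_i)}{c_i^df_i(P_i)}=\frac{f(P_i)}{f_i(P_i)},
\end{equation*}
so $\mathrm{ev}_d(f)$ does not depend on the choice of representatives.

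For part (b), let $f_1,\ldots,f_m$ and $g_1,\ldots,g_m$ be two choices of polynomials in $S_d$ with $f_i(P_i)\ne 0$ and $g_i(P_i)\ne 0$ for each $i$, yielding evaluation maps $\mathrm{ev}_d^f$ and $\mathrm{ev}_d^g$ with images $C_\mathbb{X}^f(d)$ and $C_\mathbb{X}^g(d)$. I would set $\lambda_i=f_i(P_i)/g_i(P_i)\in K^*$ and consider the diagonal $K$-linear isomorphism $\Phi\colon K^m\to K^m$, $(x_1,\ldots,x_m)\mapsto(\lambda_1x_1,\ldots,\lambda_mx_m)$. A direct coordinate-by-coordinate comparison gives $\Phi\circ\mathrm{ev}_d^f=\mathrm{ev}_d^g$, so $\Phi$ restricts to a $K$-linear bijection $C_\mathbb{X}^f(d)\to C_\mathbb{X}^g(d)$; in particular the two codes have the same length $m=|\mathbb{X}|$ and the same dimension. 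Because each $\lambda_i$ is nonzero, the support (and hence the Hamming weight $\|\cdot\|$) of any vector is preserved by $\Phi$, and consequently the minimum distances coincide.

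There is no real obstacle: both parts reduce to the homogeneity identity $h(cP)=c^{\deg h}h(P)$ together with the observation that coordinate-wise scaling by nonzero constants preserves the Hamming weight. The only subtlety, and the reason (a) and (b) are stated separately, is that the ratios $f(P_i)/f_i(P_i)$ are invariant under the projective ambiguity in $P_i$ but genuinely depend on the auxiliary choice of $f_i$; the diagonal equivalence in (b) is precisely the device that absorbs this dependence at the level of code parameters.
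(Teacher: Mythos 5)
Your proposal is correct and follows essentially the same route as the paper: part (a) is the homogeneity computation $f(c_iP_i)/f_i(c_iP_i)=f(P_i)/f_i(P_i)$, and part (b) rests on the observation that replacing $f_1,\ldots,f_m$ only rescales each coordinate by a nonzero constant. Your explicit diagonal isomorphism $\Phi$ is just a repackaging of the paper's remark that the two evaluation maps have equal kernels and assign equal Hamming weights to every $f\in S_d$, so the parameters coincide.
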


\begin{proof} (a): If $P_1',\ldots,P_m'$ is
another set of representatives, there are
$\lambda_1,\ldots,\lambda_m$ in $K^*$ such that $P_i'=\lambda_iP_i$
for all $i$. Thus, $f(P_i')/f_i(P_i')=f(P_i)/f_i(P_i)$ for $f\in S_d$
and $1\leq i\leq m$.

(b): Let $f_1',\ldots,f_m'$ be homogeneous polynomials of $S$
of degree $d$ such that $f_i'(P_i)\neq 0$ for $i=1,\ldots,m$, and let 
\begin{equation*}
{\rm ev}_d'\colon S_d\rightarrow K^{|\mathbb{X}|},\ \ \ \ \ 
f\mapsto
\left(\frac{f(P_1)}{f_1'(P_1)},\ldots,\frac{f(P_m)}{f_m'(P_m)}\right)
\end{equation*}
be the evaluation map relative to $f_1',\ldots,f_m'$. Then 
${\rm ker}({\rm ev}_d)={\rm ker}({\rm ev}_d')$ and  
$\|{\rm ev}_d(f)\|=\|{\rm ev}_d'(f)\|$ for $f\in S_d$. It follows
that the basic parameters of ${\rm ev}_d(S_d)$ and ${\rm
ev}_d'(S_d)$ are the same.
\end{proof}

\begin{lemma}\label{oct16-15} Let $\mathbb{Y}=\{[\alpha],[\beta]\}$ be a subset of
$\mathbb{P}^{s-1}$ with two elements. The following hold.
\begin{itemize}
\item[(i)] ${\rm reg}(S/I(\mathbb{Y}))=1$.
\item[(ii)] There is $h\in S_1$, a form of degree $1$, such that
$h(\alpha)\neq 0$ and $h(\beta)=0$. 
\item[(iii)] For each $d\geq 1$, there is $f\in S_d$, a form of
degree $d$, such that
$f(\alpha)\neq 0$ and $f(\beta)=0$. 
\item[(iv)] If $\mathbb{X}$ is a subset of $\mathbb{P}^{s-1}$
with at least two elements and $d\geq 1$, then there is $f\in S_d$
such $f\notin I(\mathbb{X})$ and $(I(\mathbb{X})\colon f)\neq
I(\mathbb{X})$. 
\end{itemize}
\end{lemma}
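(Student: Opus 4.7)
The plan is to establish (ii) and (iii) by direct construction, then derive (i) from (ii) via the evaluation map, and finally deduce (iv) from (iii) together with the primary decomposition of $I(\mathbb{X})$.

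For (ii), I would first observe that $[\alpha]\neq[\beta]$ is equivalent to $\alpha$ and $\beta$ being linearly independent vectors in $K^s\setminus\{0\}$, so some $2\times 2$ minor $\alpha_i\beta_j-\alpha_j\beta_i$ is non-zero. The linear form $h=\beta_jt_i-\beta_it_j$ then satisfies $h(\beta)=0$ and $h(\alpha)=\alpha_i\beta_j-\alpha_j\beta_i\neq 0$. For (iii), taking this $h$ together with any $h'\in S_1$ such that $h'(\alpha)\neq 0$ (for instance $h'=t_k$ for an index $k$ with $\alpha_k\neq 0$), the polynomial $f=h(h')^{d-1}\in S_d$ satisfies $f(\alpha)\neq 0$ and $f(\beta)=0$.

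For (i), I would apply (ii) twice, once as stated and once with the roles of $\alpha$ and $\beta$ swapped, to produce $h_1,h_2\in S_1$ with $h_1(\alpha)\neq 0$, $h_1(\beta)=0$ and $h_2(\alpha)=0$, $h_2(\beta)\neq 0$. Under the evaluation map ${\rm ev}_1\colon S_1\to K^2$ their images are linearly independent, so $H_\mathbb{Y}(1)=\dim_K{\rm ev}_1(S_1)\geq 2$. Since ${\rm ev}_d(S_d)\subset K^2$, we also have $H_\mathbb{Y}(d)\leq 2$ for every $d$, forcing $H_\mathbb{Y}(1)=2=|\mathbb{Y}|$. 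Proposition~\ref{hilbert-function-dim=coro} then identifies the regularity as the least $r$ with $H_\mathbb{Y}(r)=|\mathbb{Y}|$, giving ${\rm reg}(S/I(\mathbb{Y}))=1$.

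For (iv), I would fix two distinct points $[\alpha],[\beta]\in\mathbb{X}$ and apply (iii) to the pair $\{[\alpha],[\beta]\}$ to obtain $f\in S_d$ with $f(\alpha)\neq 0$ and $f(\beta)=0$. The condition $f(\alpha)\neq 0$ gives $f\notin I(\mathbb{X})$ directly. By Lemma~\ref{primdec-ix-a} the associated primes of $I(\mathbb{X})$ are exactly the $I_{[\gamma]}$ for $[\gamma]\in\mathbb{X}$, and by Theorem~\ref{zero-divisors} the set of zero-divisors on $S/I(\mathbb{X})$ is $\bigcup_{[\gamma]\in\mathbb{X}}I_{[\gamma]}$; since $f\in I_{[\beta]}$, it follows that $(I(\mathbb{X})\colon f)\neq I(\mathbb{X})$. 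The only point requiring a little care lies in (i), where I must combine the upper bound $H_\mathbb{Y}(d)\leq|\mathbb{Y}|$ with the evaluation-based lower bound $H_\mathbb{Y}(1)\geq 2$ before invoking Proposition~\ref{hilbert-function-dim=coro}; the other items are essentially elementary linear algebra plus the primary-decomposition machinery already quoted.
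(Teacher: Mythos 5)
Your proof is correct, but for (i) and (ii) you travel in the opposite direction from the paper. The paper proves (i) first: since $H_\mathbb{Y}(0)=1$ and $|\mathbb{Y}|=2$, the strict-growth statement of Proposition~\ref{hilbert-function-dim=coro} forces $H_\mathbb{Y}(1)=2$, hence ${\rm reg}(S/I(\mathbb{Y}))=1$; it then deduces (ii) from the resulting surjectivity of ${\rm ev}_1\colon S_1\to K^2$, since $(1,0)$ lies in the image. You instead construct the separating form in (ii) by hand -- the $2\times 2$ minor argument $h=\beta_jt_i-\beta_it_j$, using that $\alpha,\beta$ are linearly independent -- and then use two such forms to get the lower bound $H_\mathbb{Y}(1)\geq 2$, which together with $H_\mathbb{Y}(d)\leq 2$ and Proposition~\ref{hilbert-function-dim=coro} yields (i). Your route makes (ii) independent of the Cayley--Bacharach-type input and exhibits the form explicitly; the paper's route is shorter because once (i) is known, surjectivity of the evaluation map gives (ii) with no computation. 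One small point: when you write $H_\mathbb{Y}(1)=\dim_K{\rm ev}_1(S_1)$ you are implicitly using that the kernel of ${\rm ev}_1$ on $S_1$ is $I(\mathbb{Y})_1$, i.e.\ Proposition~\ref{jan4-15}(i), which is worth citing. Parts (iii) and (iv) coincide with the paper's argument, except that the paper takes the slightly simpler $f=h^d$ where you take $f=h(h')^{d-1}$; both are fine.
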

\begin{proof}
(i): As $H_\mathbb{Y}(0)=1$ and $|\mathbb{Y}|=2$, by
Proposition~\ref{hilbert-function-dim=coro}, we get that
$H_\mathbb{Y}(1)=|\mathbb{Y}|=2$. Thus $S/I(\mathbb{Y})$ has
regularity equal to $1$. 

(ii): Consider the evaluation map 
$$
{\rm ev}_1\colon S_1\longrightarrow K^2,\ \ \ \ \ 
f\mapsto \left(f(\alpha)/f_1(\alpha),f(\beta)/f_2(\beta)\right). 
$$
By part (i) this map is onto. Thus $(1,0)$ is
in the image of ${\rm ev}_1$ and the result follows. 

(iii): It follows from part (ii) by setting $f=h^d$.

(iv): By part (iii), there are distinct $[\alpha],[\beta]$ in
$\mathbb{X}$ and $f\in S_d$ such that $f(\alpha)\neq 0$, $f(\beta)=0$.
Then $f\notin I(\mathbb{X})$. 
Notice that $f(\beta)=0$ if and only if $f\in I_{[\beta]}$. 
Hence, by
Theorem~\ref{zero-divisors} and Lemma~\ref{primdec-ix-a}, $f$ is a
zero-divisor of $S/I(\mathbb{X})$, that is, 
$(I(\mathbb{X})\colon f)\neq I(\mathbb{X})$.
\end{proof}

The next result was shown in \cite[Proposition~5.2]{algcodes} and
\cite[Proposition~2.1]{tohaneanu} for some
special types of Reed-Muller-type codes (cf.
Theorem~\ref{footprint-lowerbound}(vi)). In \cite{algcodes} (resp.
\cite{tohaneanu}) it is assumed that $\mathbb{X}$ is contained in a
projective torus (resp. $\mathbb{X}$ is not contained in a hyperplane
and that there is $f\in S_d$ not vanishing at any point of
$\mathbb{X}$).

\begin{proposition}\label{md-is-decreasing} There is an integer
$r\geq 0$ such that
$$
|\mathbb{X}|=\delta_\mathbb{X}(0)>\delta_\mathbb{X}(1)>\cdots>\delta_\mathbb{X}(d)=\delta_\mathbb{X}(r)=1\
\mbox{ for }\ d\geq r.
$$
\end{proposition}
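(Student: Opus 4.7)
The plan is to establish three facts and then combine them: (1) $\delta_\mathbb{X}(0)=|\mathbb{X}|$; (2) $\delta_\mathbb{X}(d)=1$ for all sufficiently large $d$; and (3) $\delta_\mathbb{X}(d)>\delta_\mathbb{X}(d+1)$ whenever $\delta_\mathbb{X}(d)\geq 2$. Setting $r:=\min\{d\geq 0:\delta_\mathbb{X}(d)=1\}$---whose existence is guaranteed by (2)---then yields the strictly decreasing chain in the statement. For (1), since $S_0=K$, any nonzero codeword in $C_\mathbb{X}(0)$ is a nonzero scalar multiple of $(1/f_1(P_1),\ldots,1/f_m(P_m))$, which has all entries nonzero. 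Moreover $\delta_\mathbb{X}(d)\geq 1$ for every $d$: pick any $[P_i]\in\mathbb{X}$ and an index $k$ for which the $k$-th coordinate of $P_i$ is nonzero; then $t_k^d$ evaluates to a nonzero codeword.

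For (2), I would fix representatives $P_1,\ldots,P_m$ of $\mathbb{X}$ and, for each $j\geq 2$, apply Lemma~\ref{oct16-15}(ii) to the pair $\{[P_1],[P_j]\}$ to produce $h_j\in S_1$ with $h_j(P_1)\neq 0$ and $h_j(P_j)=0$. The product $f:=h_2\cdots h_m\in S_{m-1}$ then vanishes at every $[P_j]$ with $j\geq 2$ but not at $[P_1]$, so ${\rm ev}_{m-1}(f)$ has weight exactly $1$. Hence $\delta_\mathbb{X}(m-1)=1$, and $r\leq m-1$ exists.

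The main step is (3). Suppose $d<r$, so $w:=\delta_\mathbb{X}(d)\geq 2$, and choose $f\in S_d$ with $\|{\rm ev}_d(f)\|=w$. I would pick two distinct points $[P_{i_1}],[P_{i_2}]\in\mathbb{X}$ at which $f$ does not vanish, and invoke Lemma~\ref{oct16-15}(ii) to obtain $h\in S_1$ with $h(P_{i_1})=0$ and $h(P_{i_2})\neq 0$. Then $hf\in S_{d+1}$ is a nonzero codeword---it does not vanish at $[P_{i_2}]$---whose support is strictly contained in that of $f$ (since $hf$ vanishes at $[P_{i_1}]$), giving $\delta_\mathbb{X}(d+1)\leq w-1<\delta_\mathbb{X}(d)$. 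The same multiplication trick, applied to a weight-one $f\in S_d$ supported only at $[P_i]$ together with $h\in S_1$ chosen so that $h(P_i)\neq 0$, shows that the value $\delta_\mathbb{X}(d)=1$ propagates to $d+1$, so $\delta_\mathbb{X}(d)=1$ for every $d\geq r$. The only nontrivial ingredient is the linear-form construction of Lemma~\ref{oct16-15}(ii); I do not anticipate a genuine obstacle.
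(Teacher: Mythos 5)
Your proof is correct and uses the same core argument as the paper: given a minimum-weight codeword of degree $d$ (equivalently, a polynomial with the maximal number of zeros in $\mathbb{X}$), multiply it by a linear form supplied by Lemma~\ref{oct16-15} to kill one more point while keeping the codeword nonzero, yielding $\delta_\mathbb{X}(d)>\delta_\mathbb{X}(d+1)$ whenever $\delta_\mathbb{X}(d)\geq 2$. The only difference is that you make explicit some facts the paper leaves implicit or delegates elsewhere (that $\delta_\mathbb{X}(0)=|\mathbb{X}|$, that $\delta_\mathbb{X}$ eventually reaches $1$, and that the value $1$ persists), which is a harmless, slightly more self-contained variant of the same proof.
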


\begin{proof}
Assume that $\delta_\mathbb{X}(d)>1$, it suffices to show that
$\delta_\mathbb{X}(d)>\delta_\mathbb{X}(d+1)$. Pick $g\in S_d$ such
that $g\notin I(\mathbb{X})$ and 
$$
|V_\mathbb{X}(g)|=\max\{|V_{\mathbb{X}}(f)|\colon {\rm ev}_d(f)\neq 0;
f\in S_d\}.
$$
Then $\delta_\mathbb{X}(d)=|\mathbb{X}|-|V_\mathbb{X}(g)|\geq 2$. Thus
there are distinct points $[\alpha],[\beta]$ in $\mathbb{X}$ such that
$g(\alpha)\neq 0$ and $g(\beta)\neq 0$. By Lemma~\ref{oct16-15}, there
is a linear form $h\in S_1$ such that $h(\alpha)\neq 0$ and
$h(\beta)=0$. Hence the polynomial $hg$ is not in $I(\mathbb{X})$, has
degree $d+1$, and has at least $|V_\mathbb{X}(g)|+1$ zeros. 
Thus $\delta_\mathbb{X}(d)>\delta_\mathbb{X}(d+1)$, as required.
\end{proof}

The following summarizes the well-known relation between 
projective Reed-Muller-type codes and the theory of Hilbert functions.

\begin{proposition}{\rm(\cite{GRT}, \cite{algcodes})}\label{jan4-15}
The following hold. 
\begin{itemize}
\item[{\rm (i)}] $H_\mathbb{X}(d)=\dim_KC_\mathbb{X}(d)$ for $d\geq 0$.
\item[{\rm(ii)}] ${\rm deg}(S/I(\mathbb{X}))=|\mathbb{X}|$. 
\item[{\rm (iii)}] $\delta_\mathbb{X}(d)=1$ for $d\geq {\rm reg}(S/I(\mathbb{X}))$.  
\item[{\rm (iv)}] $S/I(\mathbb{X})$ is a Cohen--Macaulay reduced graded ring of dimension
$1$. 
\item[{\rm (v)}] $C_\mathbb{X}(d)\neq(0)$ for $d\geq 1$. 
\end{itemize}
\end{proposition}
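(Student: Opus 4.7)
The proof splits into five parts. The plan is to first pin down the kernel of the evaluation map and then exploit the primary decomposition of $I(\mathbb{X})$ provided by Lemma~\ref{primdec-ix-a} together with the additivity of the degree.

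For (i), I would observe that for the chosen $f_1,\ldots,f_m$, an element $f \in S_d$ lies in $\ker({\rm ev}_d)$ if and only if $f(P_i) = 0$ for all $i$, if and only if $f \in I(\mathbb{X})_d$. Hence ${\rm ev}_d$ induces an isomorphism $S_d/I(\mathbb{X})_d \cong C_\mathbb{X}(d)$, yielding $\dim_K C_\mathbb{X}(d) = H_\mathbb{X}(d)$; note that this is independent of the choice of the $f_i$ by Lemma~\ref{may21-15-1}. For (ii), Lemma~\ref{primdec-ix-a} identifies the irredundant primary decomposition $I(\mathbb{X}) = \bigcap_{i=1}^{m} I_{[P_i]}$, in which every associated prime has height $s-1$ and $\deg(S/I_{[P_i]}) = 1$. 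Proposition~\ref{additivity-of-the-degree} then gives $\deg(S/I(\mathbb{X})) = m = |\mathbb{X}|$, and as a byproduct $\dim(S/I(\mathbb{X})) = s - (s-1) = 1$, which handles the dimension assertion in (iv).

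For the remainder of (iv), the ring is graded by construction and is reduced because it is an intersection of prime ideals, so the substantive point is the Cohen--Macaulay property. Since the dimension is $1$, this amounts to exhibiting a homogeneous non-zero-divisor in $\mathfrak{m}$, equivalently by Theorem~\ref{zero-divisors} a homogeneous form avoiding every associated prime $I_{[P_i]}$. I would build such a form explicitly: for each pair $i \neq j$, Lemma~\ref{oct16-15}(ii) supplies a linear form $h_{ij}$ with $h_{ij}(P_i) \neq 0$ and $h_{ij}(P_j) = 0$; setting $h_i = \prod_{j \neq i} h_{ij}$ produces a form of degree $m-1$ vanishing on every $[P_j]$ with $j \neq i$ but not on $[P_i]$, and then $f = h_1 + \cdots + h_m$ satisfies $f(P_i) = h_i(P_i) \neq 0$ for each $i$, so $f$ is regular on $S/I(\mathbb{X})$.

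With (i), (ii), and (iv) in place, (iii) follows at once: for $d \geq {\rm reg}(S/I(\mathbb{X}))$ the function $H_\mathbb{X}(d)$ equals the Hilbert polynomial, which is the constant $\deg(S/I(\mathbb{X})) = |\mathbb{X}|$ since $\dim(S/I(\mathbb{X})) = 1$. Thus $\dim_K C_\mathbb{X}(d) = |\mathbb{X}|$, so ${\rm ev}_d$ is surjective, every standard basis vector of $K^{|\mathbb{X}|}$ lies in $C_\mathbb{X}(d)$, and $\delta_\mathbb{X}(d) = 1$. Finally, (v) follows by picking any $[P_i] \in \mathbb{X}$ and a coordinate $k$ with the $k$-th entry of $P_i$ nonzero: then ${\rm ev}_d(t_k^d)$ has a nonzero $i$-th entry and is therefore a nonzero element of $C_\mathbb{X}(d)$ for every $d \geq 1$. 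The main potential obstacle is the construction of a homogeneous non-zero-divisor in (iv) when $K$ is finite, where naive prime avoidance can fail; the explicit construction above circumvents this because the $h_i$'s are built to have complementary vanishing behavior on $\mathbb{X}$.
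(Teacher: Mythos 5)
Your proposal is correct. The paper does not prove this proposition at all; it is stated in the preliminaries with citations to \cite{GRT} and \cite{algcodes}, so there is no internal argument to compare against. Your proof is a sound self-contained version of the standard one: identifying $\ker({\rm ev}_d)=I(\mathbb{X})_d$ gives (i); the primary decomposition of Lemma~\ref{primdec-ix-a} plus additivity of the degree gives (ii) and the dimension and reducedness parts of (iv); and (iii) and (v) follow as you say. The one point worth highlighting is your treatment of the Cohen--Macaulay property: since $\dim=1$, it suffices to produce a homogeneous non-zero-divisor of positive degree, and your explicit form $f=h_1+\cdots+h_m$ built from the separating linear forms of Lemma~\ref{oct16-15}(ii) does exactly that, correctly sidestepping the failure of degree-one prime avoidance over finite fields (one could instead invoke homogeneous prime avoidance, allowing higher degrees, since none of the associated primes $I_{[P_i]}$ equals $\mathfrak{m}$, but your construction is cleaner and more elementary). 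Only a trivial caveat: when $|\mathbb{X}|=1$ your $f$ is the empty product, of degree $0$, so it is not in $\mathfrak{m}$; but in that case $S/I(\mathbb{X})$ is a one-dimensional domain (isomorphic to a polynomial ring in one variable), so any variable $t_k$ with $\alpha_k\neq 0$ serves as the regular element and (iv) is immediate.
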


\section{Computing the number of zeros using the
degree}\label{computing-zeros-section}
In this section we give a degree
formula to compute the number of 
zeros that a homogeneous polynomial has in any given finite set of
points in a projective space over any field.

An ideal $I\subset S$ is called {\it unmixed\/} 
if all its associated primes have the same height and $I$ is called
{\it radical\/} if $I$ is equal to its radical. The radical of $I$ is
denoted by ${\rm rad}(I)$.

\begin{lemma}\label{jul11-15} Let $I\subset S$ be a radical unmixed graded ideal. 
If $f\in S$ is homogeneous, $(I\colon f)\neq I$, and $\mathcal{A}$ is
the set of all associated primes
of $S/I$ that contain $f$, then ${\rm ht}(I)={\rm ht}(I,f)$ and 
$$
\deg(S/(I,f))=\sum_{\mathfrak{p}\in\mathcal{A}}\deg(S/\mathfrak{p}).
$$
\end{lemma}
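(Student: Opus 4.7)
The plan is to combine the primary decomposition of $I$, which is especially clean because $I$ is radical and unmixed, with the additivity of degree in Proposition~\ref{additivity-of-the-degree}. Since $I$ is radical and unmixed, I would write $I=\mathfrak{p}_1\cap\cdots\cap\mathfrak{p}_m$, where $\mathfrak{p}_1,\ldots,\mathfrak{p}_m$ are the associated primes of $S/I$, all of the same height $h:={\rm ht}(I)$. The hypothesis $(I\colon f)\neq I$, together with Theorem~\ref{zero-divisors}, forces $f$ to lie in some $\mathfrak{p}_i$, so $\mathcal{A}\neq\emptyset$; after relabeling I may assume $\mathcal{A}=\{\mathfrak{p}_1,\ldots,\mathfrak{p}_r\}$.

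First I would pin down the minimal primes of $(I,f)$ of height $h$. Each $\mathfrak{p}_i\in\mathcal{A}$ contains $(I,f)$ and has height $h$, which gives ${\rm ht}(I,f)\leq h$, while ${\rm ht}(I,f)\geq{\rm ht}(I)=h$ is automatic; hence ${\rm ht}(I,f)=h$. If $\mathfrak{q}$ is a minimal prime of $(I,f)$ of height $h$, it must contain some $\mathfrak{p}_j$ of height $h$, and a height comparison forces $\mathfrak{q}=\mathfrak{p}_j$; since $f\in\mathfrak{q}$ this means $\mathfrak{p}_j\in\mathcal{A}$, and conversely each $\mathfrak{p}_i\in\mathcal{A}$ is minimal over $(I,f)$ by the same argument. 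Next I would identify the $\mathfrak{p}_i$-primary components by localizing: since distinct associated primes of $I$ share the height $h$, none is contained in another, so $\mathfrak{p}_j S_{\mathfrak{p}_i}=S_{\mathfrak{p}_i}$ for all $j\neq i$. Therefore $I S_{\mathfrak{p}_i}=\mathfrak{p}_i S_{\mathfrak{p}_i}$, and using $f\in\mathfrak{p}_i$ one gets $(I,f)S_{\mathfrak{p}_i}=\mathfrak{p}_i S_{\mathfrak{p}_i}$. Contracting back to $S$, the $\mathfrak{p}_i$-primary component of $(I,f)$ equals $\mathfrak{p}_i$ itself, contributing $\deg(S/\mathfrak{p}_i)$ to the degree sum.

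To conclude, I would apply the additivity of degree to an irredundant primary decomposition of $(I,f)$. The previous paragraph identifies the height-$h$ primary components as exactly the primes $\mathfrak{p}_i\in\mathcal{A}$, so Proposition~\ref{additivity-of-the-degree} yields
$$\deg(S/(I,f))=\sum_{\mathfrak{p}_i\in\mathcal{A}}\deg(S/\mathfrak{p}_i).$$
I expect the delicate point to be the localization step: it depends crucially on the unmixedness of $I$ to rule out containments among the minimal primes, which is what allows the $\mathfrak{p}_i$-primary component of $(I,f)$ to collapse onto $\mathfrak{p}_i$. Without unmixedness, embedded components or extra height-$h$ primary components not in $\mathcal{A}$ could spoil the clean match, so I would make sure the height bookkeeping is recorded explicitly when citing Proposition~\ref{additivity-of-the-degree}.
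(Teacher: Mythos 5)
Your proof is correct and takes essentially the same route as the paper: use radicality and unmixedness to write $I=\bigcap_j\mathfrak{p}_j$, identify the height-${\rm ht}(I)$ minimal primes of $(I,f)$ as exactly the elements of $\mathcal{A}$, show by localizing at each $\mathfrak{p}_i\in\mathcal{A}$ that the corresponding (uniquely determined) primary component of $(I,f)$ is $\mathfrak{p}_i$ itself, and finish with the additivity of the degree. The only difference is cosmetic: you compute $(I,f)S_{\mathfrak{p}_i}=\mathfrak{p}_iS_{\mathfrak{p}_i}$ directly from $I=\bigcap_j\mathfrak{p}_j$ and $f\in\mathfrak{p}_i$, whereas the paper first sets up an auxiliary intersection identity and then localizes it; both arguments hinge on the same three ingredients (radical, unmixed, contraction of the localization at a minimal prime).
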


\begin{proof} As $f$ is a zero-divisor of
$S/I$ and $I$ is 
unmixed, there is an associated prime ideal $\mathfrak{p}$ of $S/I$ of
height ${\rm ht}(I)$ such that
$f\in\mathfrak{p}$. Thus $I\subset(I,f)\subset\mathfrak{p}$, and
consequently  ${\rm ht}(I)={\rm ht}(I,f)$. Therefore the set of
associated primes of $(I,f)$ of height equal to ${\rm ht}(I)$ is not empty and is equal to 
 $\mathcal{A}$. There is an irredundant primary decomposition
\begin{equation}\label{jul10-15}
(I,f)=\mathfrak{q}_1\cap\cdots\cap\mathfrak{q}_r\cap\mathfrak{q}_{r+1}'\cap\cdots\cap\mathfrak{q}_t',
\end{equation}
where ${\rm rad}(\mathfrak{q}_i)=\mathfrak{p_i}$,
$\mathcal{A}=\{\mathfrak{p}_1,\ldots,\mathfrak{p}_r\}$, and ${\rm
ht}(\mathfrak{q}_i')>{\rm ht}(I)$ for $i>r$. We may assume that the
associated primes of $S/I$ are
$\mathfrak{p}_1,\ldots,\mathfrak{p}_m$. Since
$I$ is a radical ideal, we get that $I=\cap_{i=1}^m\mathfrak{p}_i$.
Next we show the following equality:
\begin{equation}\label{jul10-15-1}
\mathfrak{p}_1\cap\cdots\cap\mathfrak{p}_m=
\mathfrak{q}_1\cap\cdots\cap\mathfrak{q}_r\cap\mathfrak{q}_{r+1}'\cap\cdots\cap\mathfrak{q}_t'
\cap\mathfrak{p}_{r+1}\cap\cdots\cap\mathfrak{p}_m.
\end{equation}
The inclusion ``$\supset$'' is clear because
$\mathfrak{q}_i\subset\mathfrak{p}_i$ for $i=1,\ldots,r$. The 
inclusion ``$\subset$'' follows by noticing that the right hand side of
Eq.~(\ref{jul10-15-1}) is equal to 
$(I,f)\cap\mathfrak{p}_{r+1}\cap\cdots\cap\mathfrak{p}_m$, and 
consequently it contains $I=\cap_{i=1}^m\mathfrak{p}_i$. Notice that
${{\rm rad}}(\mathfrak{q}_j')=\mathfrak{p}_j'\not\subset\mathfrak{p}_i$ for
all $i,j$ and $\mathfrak{p}_j\not\subset\mathfrak{p}_i$ for $i\neq j$.
Hence localizing Eq.~(\ref{jul10-15-1}) 
at the prime ideal $\mathfrak{p}_i$ for $i=1,\ldots,r$, 
we get that $\mathfrak{p}_i=I_{\mathfrak{p}_i}\cap
S=(\mathfrak{q}_i)_{\mathfrak{p}_i}\cap S=\mathfrak{q}_i$ for
$i=1,\ldots,r$. Using Eq.~(\ref{jul10-15}) and the additivity of the
degree the required equality follows.
\end{proof}

\begin{lemma}\label{degree-formula-for-the-number-of-zeros-proj}
Let $\mathbb{X}$ be a finite subset of 
$\mathbb{P}^{s-1}$ over a field $K$ and let $I(\mathbb{X})\subset S$ be its
graded vanishing ideal. If 
$0\neq f\in S$ is homogeneous, then the number of zeros of $f$ in
$\mathbb{X}$ is given by 
$$
|V_{\mathbb{X}}(f)|=\left\{
\begin{array}{cl}
\deg S/(I(\mathbb{X}),f)&\mbox{if }(I(\mathbb{X})\colon f)\neq
I(\mathbb{X}),\\ 
0&\mbox{if }(I(\mathbb{X})\colon f)=I(\mathbb{X}).
\end{array}
\right.
$$
\end{lemma}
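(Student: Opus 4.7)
The plan is to derive the formula directly from the irredundant primary decomposition of $I(\mathbb{X})$ supplied by Lemma~\ref{primdec-ix-a} together with the degree formula of Lemma~\ref{jul11-15}.

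First I would record that $I(\mathbb{X})=\bigcap_{[\beta]\in\mathbb{X}}I_{[\beta]}$ is an irredundant primary decomposition in which each $I_{[\beta]}$ is prime of height $s-1$ with $\deg(S/I_{[\beta]})=1$. In particular $I(\mathbb{X})$ is a radical unmixed graded ideal whose set of associated primes is $\{I_{[\beta]}\colon [\beta]\in\mathbb{X}\}$. I would also note the elementary equivalence, for a homogeneous polynomial $f\in S$ and $[\beta]\in\mathbb{X}$, that $f\in I_{[\beta]}$ if and only if $f(\beta)=0$; this holds because $I_{[\beta]}$ is generated by homogeneous forms vanishing at $[\beta]$ (or, alternatively, using the explicit generators from Lemma~\ref{primdec-ix-a}).

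Next I would dispose of the non-zero-divisor case. If $(I(\mathbb{X})\colon f)=I(\mathbb{X})$, then $f$ is regular on $S/I(\mathbb{X})$, so by Theorem~\ref{zero-divisors} the element $f$ lies in none of the associated primes $I_{[\beta]}$. By the equivalence above this means $f(\beta)\neq 0$ for every $[\beta]\in\mathbb{X}$, hence $V_{\mathbb{X}}(f)=\emptyset$ and both sides of the claimed formula are zero.

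Finally, for the case $(I(\mathbb{X})\colon f)\neq I(\mathbb{X})$, let $\mathcal{A}$ be the set of associated primes of $I(\mathbb{X})$ that contain $f$. By the equivalence above, $\mathcal{A}=\{I_{[\beta]}\colon [\beta]\in V_{\mathbb{X}}(f)\}$. Applying Lemma~\ref{jul11-15} to the radical unmixed ideal $I(\mathbb{X})$ yields
$$
\deg\bigl(S/(I(\mathbb{X}),f)\bigr)=\sum_{\mathfrak{p}\in\mathcal{A}}\deg(S/\mathfrak{p})=\sum_{[\beta]\in V_{\mathbb{X}}(f)}1=|V_{\mathbb{X}}(f)|,
$$
which is exactly the required equality. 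I do not foresee a real obstacle: all the work is already packaged in Lemmas~\ref{primdec-ix-a} and~\ref{jul11-15}; the only thing to be careful about is the homogeneity hypothesis on $f$, which is what makes the equivalence $f\in I_{[\beta]}\iff f(\beta)=0$ meaningful for points of projective space.
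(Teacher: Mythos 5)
Your proposal is correct and follows essentially the same route as the paper's own proof: both use the primary decomposition $I(\mathbb{X})=\bigcap_{[\beta]\in\mathbb{X}}I_{[\beta]}$ from Lemma~\ref{primdec-ix-a} to see that $I(\mathbb{X})$ is radical and unmixed, identify the associated primes containing $f$ with the zeros of $f$ in $\mathbb{X}$, and then apply Lemma~\ref{jul11-15} (resp.\ Theorem~\ref{zero-divisors} in the regular case). There is nothing to add.
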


\begin{proof} Let $[P_1],\ldots,[P_m]$ be the points of $\mathbb{X}$
with $m=|\mathbb{X}|$, and let $[P]$ be a point in
$\mathbb{X}$, with $P=(\alpha_1,\ldots,\alpha_s)$
and $\alpha_k\neq 0$ for some $k$. Then the vanishing ideal $I_{[P]}$
of $[P]$ is a prime ideal of height $s-1$, 
\begin{equation*}
I_{[P]}=(\{\alpha_kt_i-\alpha_it_k\vert\, k\neq i\in\{1,\ldots,s\}),\
\deg(S/I_{[P]})=1,
\end{equation*}
and $I(\mathbb{X})=\bigcap_{i=1}^mI_{[P_i]}$ is a primary
decomposition (see Lemma~\ref{primdec-ix-a}). In particular 
$I(\mathbb{X})$ is an unmixed radical ideal of dimension $1$. 

Assume that $(I(\mathbb{X})\colon f)\neq I(\mathbb{X})$. Let
$\mathcal{A}$ be the set of all $I_{[P_i]}$ that contain the
polynomial $f$. Then 
$f(P_i)=0$ if and only if $I_{[P_i]}$ is in $\mathcal{A}$. Hence, 
by Lemma~\ref{jul11-15}, we get 
$$
|V_\mathbb{X}(f)|=\sum_{[P_i]\in
V_\mathbb{X}(f)}\deg S/I_{[P_i]}=\sum_{f\in I_{[P_i]}}\deg S/I_{[P_i]}=\deg S/(I(\mathbb{X}),f).
$$
If $(I(\mathbb{X})\colon f)=I(\mathbb{X})$, then $f$ is
a regular element of $S/I(\mathbb{X})$. This means that $f$ is not in
any of the associated primes of $I(\mathbb{X})$, that is, $f\notin
I_{[P_i]}$ for all $i$. Thus $V_\mathbb{X}(f)=\emptyset$ and
$|V_\mathbb{X}(f)|=0$.
\end{proof}

\section{The minimum distance function of a graded
ideal}\label{mdf-section}

In this section we study the minimum distance function $\delta_I$ of a graded
ideal $I$ and show that it generalizes the
minimum distance function of a projective Reed-Muller-type code. To
avoid repetitions, we continue to employ 
the notations and definitions used in Sections~\ref{intro-section} and
\ref{prelim-section}. 

The next result will be used to bound the number of zeros
of polynomials over finite fields (see
Corollary~\ref{poly-bounds-initial}) 
and to study the general properties of $\delta_I$. 

\begin{lemma}\label{degree-initial-footprint} Let $I\subset S$ be an unmixed graded ideal and let $\prec$ be
a monomial order. If $f\in S$ is homogeneous and $(I\colon f)\neq I$, then
$$
\deg(S/(I,f))\leq\deg(S/({\rm
in}_\prec(I),{\rm in}_\prec(f)))\leq\deg(S/I),
$$
and $\deg(S/(I,f))<\deg(S/I)$ if $I$ is an unmixed radical ideal and $f\notin I$.
\end{lemma}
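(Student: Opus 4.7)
The plan is to reduce both inequalities to a comparison of Hilbert functions via Macaulay's theorem (Theorem~\ref{hilb=init}), after first verifying that all three rings involved share the same Krull dimension.

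First I would establish the key dimension equality. Since $I$ is unmixed and $(I\colon f)\neq I$, the polynomial $f$ lies in some associated prime $\mathfrak{p}$ of $I$, and unmixedness gives ${\rm ht}(\mathfrak{p})={\rm ht}(I)$; since $(I,f)\subseteq\mathfrak{p}$, this forces $\dim(S/(I,f))=\dim(S/I)$. Applying Macaulay to both $I$ and $(I,f)$, I get $\dim(S/{\rm in}_\prec(I))=\dim(S/{\rm in}_\prec(I,f))$. The chain of inclusions
\[
{\rm in}_\prec(I)\ \subseteq\ J:=({\rm in}_\prec(I),{\rm in}_\prec(f))\ \subseteq\ {\rm in}_\prec(I,f)
\]
then squeezes $\dim(S/J)$ to be the same common value $k:=\dim(S/I)$.

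Next I would compare Hilbert functions. Each inclusion above yields a surjection of graded rings, hence $H_{{\rm in}_\prec(I)}(d)\geq H_J(d)\geq H_{{\rm in}_\prec(I,f)}(d)$ for all $d$. Because all three rings have the same Krull dimension $k$, the leading coefficients of their Hilbert polynomials (multiplied by $(k-1)!$) give
\[
\deg(S/I)=\deg(S/{\rm in}_\prec(I))\ \geq\ \deg(S/J)\ \geq\ \deg(S/{\rm in}_\prec(I,f))=\deg(S/(I,f)),
\]
where the outer equalities are Macaulay again. This is exactly the required two-sided inequality.

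For the strict inequality under the additional hypotheses that $I$ is radical and $f\notin I$, I would combine Lemma~\ref{jul11-15} with the additivity of the degree (Proposition~\ref{additivity-of-the-degree}). Since $I$ is radical and unmixed, its primary decomposition is $I=\bigcap_{\mathfrak{p}\in{\rm Ass}(S/I)}\mathfrak{p}$, so additivity gives $\deg(S/I)=\sum_{\mathfrak{p}\in{\rm Ass}(S/I)}\deg(S/\mathfrak{p})$, while Lemma~\ref{jul11-15} yields $\deg(S/(I,f))=\sum_{\mathfrak{p}\in\mathcal{A}}\deg(S/\mathfrak{p})$, where $\mathcal{A}$ is the subset of associated primes containing $f$. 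Because $f\notin I=\bigcap\mathfrak{p}$, at least one associated prime omits $f$, so $\mathcal{A}\subsetneq{\rm Ass}(S/I)$; since each $\deg(S/\mathfrak{p})>0$, the strict inequality follows.

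The only mildly delicate step is the dimension squeeze in the first paragraph — everything else is a direct application of standard Hilbert-function and degree facts recalled in Section~\ref{prelim-section}, so I anticipate no real obstacle.
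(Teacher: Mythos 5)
Your proof is correct and follows essentially the same route as the paper: first force $S/(I,f)$, $S/({\rm in}_\prec(I),{\rm in}_\prec(f))$, and $S/I$ to have a common dimension via an associated prime containing $f$, then compare Hilbert functions (the paper does this through footprints and Lemma~\ref{nov6-15}, you through the inclusion $({\rm in}_\prec(I),{\rm in}_\prec(f))\subseteq{\rm in}_\prec((I,f))$ plus Macaulay's theorem, which is equivalent), and finally get the strict inequality from Lemma~\ref{jul11-15} exactly as the paper does. The only small point is that your leading-coefficient step presumes $\dim(S/I)\geq 1$; in the zero-dimensional case (which the paper treats separately) the degree is $\sum_d H(d)$, and your pointwise Hilbert-function inequalities yield that case immediately by summing.
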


\begin{proof} To simplify notation we set $J=(I,f)$ and 
$L=({\rm in}_\prec(I),{\rm in}_\prec(f))$. We denote the Krull
dimension of $S/I$ by $\dim(S/I)$. 
Recall that $\dim(S/I)=\dim(S)-{\rm ht}(I)$. First we show that $S/J$ and
$S/L$ have Krull dimension equal to $\dim(S/I)$. As $f$ is a zero-divisor of
$S/I$ and $I$ is 
unmixed, there is an
associated prime ideal $\mathfrak{p}$ of $S/I$ such that
$f\in\mathfrak{p}$ and $\dim(S/I)=\dim(S/\mathfrak{p})$. Since 
$I\subset J\subset\mathfrak{p}$, we get that $\dim(S/J)$ is 
$\dim(S/I)$.  
Since $S/I$ and $S/{\rm
in}_\prec(I)$ have the same Hilbert function, and so does
$S/\mathfrak{p}$ and $S/{\rm in}_\prec(\mathfrak{p})$, we obtain
$$
\dim(S/{\rm in}_\prec(I))=\dim(S/I)=\dim(S/\mathfrak{p})=\dim(S/{\rm
in}_\prec(\mathfrak{p})).
$$
Hence, taking heights in the inclusions ${\rm in}_\prec(I)\subset L\subset{\rm
in}_\prec(\mathfrak{p})$, we obtain ${\rm ht}(I)={\rm ht}(L)$. 

Pick a Gr\"obner basis $\mathcal{G}=\{g_1,\ldots,g_r\}$ of $I$. Then
$J$ is generated by $\mathcal{G}\cup\{f\}$ and by Lemma~\ref{nov6-15}
one has the inclusions
\begin{eqnarray*}
& &\Delta_\prec(J)=\Delta_\prec(I,f)\subset\Delta_\prec({\rm in}_\prec(g_1),\ldots,{\rm
in}_\prec(g_r),{\rm in}_\prec(f))=\\
& &\ \ \ \ \  \Delta_\prec({\rm in}_\prec(I),{\rm
in}_\prec(f))=\Delta_\prec(L)\subset \Delta_\prec({\rm in}_\prec(g_1),\ldots,{\rm
in}_\prec(g_r))=\Delta_\prec(I).
\end{eqnarray*}
Thus $\Delta_\prec(J)\subset \Delta_\prec(L)\subset \Delta_\prec(I)$.
Recall that $H_I(d)$, the Hilbert function of $I$ at $d$, is the number of standard
monomials of degree $d$. Hence $H_J(d)\leq H_L(d)\leq H_I(d)$ for
$d\geq 0$. If $\dim(S/I)$ is equal to $0$, then 
$$
\deg(S/J)=\sum_{d\geq 0}H_J(d)\leq \deg(S/L)=\sum_{d\geq 0}H_L(d)\leq
\deg(S/I)=\sum_{d\geq 0}H_I(d).
$$

Assume now that $\dim(S/I)\geq 1$. 
By the Hilbert theorem, $H_J$, $H_L$, $H_I$ are polynomial functions of degree
equal to $k=\dim(S/I)-1$. Thus 
$$k!\lim_{d\rightarrow\infty} H_J(d)/d^k\leq
k!\lim_{d\rightarrow\infty} H_L(d)/d^k\leq
k!\lim_{d\rightarrow\infty} H_I(d)/d^k,$$
that is $\deg(S/J)\leq\deg(S/L)\leq\deg(S/I)$. 

If $I$ is an unmixed radical ideal and $f\notin I$, then there is at
least one minimal prime that does not contains $f$. Hence, by
Lemma~\ref{jul11-15}, it follows that $\deg(S/(I,f))<\deg(S/I)$.
\end{proof}

\begin{remark}\rm Let $I\subset S$ be an unmixed graded ideal of
dimension $1$. If $f\in S_d$, then
$(I\colon f)=I$ if and only if $\dim(S/(I,f))=0$. In this case 
 $\deg(S/(I,f))$ could be greater than $\deg(S/I)$.
\end{remark}

\begin{corollary}\label{poly-bounds-initial} Let $\mathbb{X}$ be a
finite subset of $\mathbb{P}^{s-1}$, let $I(\mathbb{X})\subset
S$ be its vanishing ideal, and let $\prec$ be a monomial order. If 
$0\neq f\in S$ is homogeneous and $(I(\mathbb{X})\colon f)\neq I(\mathbb{X})$, then
$$
|V_{\mathbb{X}}(f)|=\deg(S/(I(\mathbb{X}),f))\leq\deg(S/({\rm
in}_\prec(I(\mathbb{X})),{\rm in}_\prec(f)))\leq\deg(S/I(\mathbb{X})),
$$
and $\deg(S/(I(\mathbb{X}),f))<\deg(S/I(\mathbb{X}))$ if $f\notin
I(\mathbb{X})$.
\end{corollary}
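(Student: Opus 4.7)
The plan is to deduce this corollary by combining two earlier results: Lemma~\ref{degree-formula-for-the-number-of-zeros-proj} (which identifies $|V_{\mathbb{X}}(f)|$ with $\deg(S/(I(\mathbb{X}),f))$) and Lemma~\ref{degree-initial-footprint} (which provides the degree inequality involving the initial ideals). The essential observation that lets us pipe one into the other is that $I(\mathbb{X})$ satisfies the hypotheses of Lemma~\ref{degree-initial-footprint}.

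First, I would record that $I(\mathbb{X})$ is an unmixed radical graded ideal. This comes for free from Lemma~\ref{primdec-ix-a}: the primary decomposition $I(\mathbb{X})=\bigcap_{[P]\in\mathbb{X}}I_{[P]}$ writes $I(\mathbb{X})$ as an intersection of prime ideals $I_{[P]}$, all of height $s-1$, so $I(\mathbb{X})$ is radical and all its associated primes have the same height.

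Next, since $f$ is homogeneous with $(I(\mathbb{X})\colon f)\neq I(\mathbb{X})$, Lemma~\ref{degree-formula-for-the-number-of-zeros-proj} gives the equality
$$
|V_{\mathbb{X}}(f)|=\deg(S/(I(\mathbb{X}),f)).
$$
With $I=I(\mathbb{X})$ playing the role of the unmixed graded ideal in Lemma~\ref{degree-initial-footprint}, the same hypotheses on $f$ yield
$$
\deg(S/(I(\mathbb{X}),f))\leq\deg(S/({\rm in}_\prec(I(\mathbb{X})),{\rm in}_\prec(f)))\leq\deg(S/I(\mathbb{X})),
$$
which chains with the previous equality to give the desired bound. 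Finally, for the strict inequality, the last clause of Lemma~\ref{degree-initial-footprint} applies because $I(\mathbb{X})$ is radical and unmixed and $f\notin I(\mathbb{X})$, giving $\deg(S/(I(\mathbb{X}),f))<\deg(S/I(\mathbb{X}))$.

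There is really no obstacle here; the corollary is a direct specialization of Lemma~\ref{degree-initial-footprint} to the vanishing ideal $I(\mathbb{X})$, glued to the geometric interpretation of $\deg(S/(I(\mathbb{X}),f))$ provided by Lemma~\ref{degree-formula-for-the-number-of-zeros-proj}. The only point worth stating carefully is the verification that $I(\mathbb{X})$ is both unmixed and radical, which is immediate from Lemma~\ref{primdec-ix-a}.
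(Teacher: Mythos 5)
Your proposal is correct and follows the same route as the paper, which deduces the corollary directly from Lemma~\ref{degree-formula-for-the-number-of-zeros-proj} and Lemma~\ref{degree-initial-footprint}. Your explicit verification that $I(\mathbb{X})$ is unmixed and radical (via Lemma~\ref{primdec-ix-a}) is a worthwhile detail the paper leaves implicit, but the argument is otherwise the same.
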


\begin{proof} It follows from
Lemma~\ref{degree-formula-for-the-number-of-zeros-proj} 
and Lemma~\ref{degree-initial-footprint}. 
\end{proof}

The next alternative formula for the minimum distance function is
valid for unmixed graded ideals. It was pointed out to
us by Vasconcelos.

\begin{theorem}\label{wolmer-obs}
Let $I\subset S$ be an unmixed graded ideal and let $\prec$ be a monomial
order on $S$. If $\Delta_\prec(I)_d^p$ 
 is the set of homogeneous
standard polynomials of degree $d$ and $\mathfrak{m}^d\not\subset I$, then
\begin{eqnarray*}
\delta_I(d)&=&\min\{\deg(S/(I\colon f))\,\vert\, f\in S_d\setminus
I\}\\
&=&\min\{\deg(S/(I\colon f))\,\vert\, f\in \Delta_\prec(I)_d^p\}.
\end{eqnarray*}
\end{theorem}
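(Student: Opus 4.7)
The plan is to exploit the short exact sequence of graded $S$-modules
\[
0 \longrightarrow (S/(I\colon f))(-d) \xrightarrow{\,\cdot f\,} S/I \longrightarrow S/(I,f) \longrightarrow 0,
\]
valid for every homogeneous $f\in S_d$, in order to convert the maximum of $\deg(S/(I,f))$ in the definition of $\delta_I(d)$ into a minimum of $\deg(S/(I\colon f))$. The resulting Hilbert function identity
\[
H_I(t)=H_{(I\colon f)}(t-d)+H_{(I,f)}(t)
\]
will yield the degree additivity $\deg(S/I)=\deg(S/(I\colon f))+\deg(S/(I,f))$, provided all three modules share the Krull dimension $k:=\dim(S/I)$; this is extracted by comparing leading coefficients of the three Hilbert polynomials when $k\geq 1$, and directly from additivity of $\dim_K$ when $k=0$.

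The main technical step is the dimension check. For any $f\in S_d\setminus I$, the injection in the sequence realizes $S/(I\colon f)$ as a nonzero submodule of $S/I$, so every associated prime of $S/(I\colon f)$ is an associated prime of $S/I$; by unmixedness (Theorem~\ref{zero-divisors}) these all have dimension $k$, hence $\dim(S/(I\colon f))=k$. If moreover $f\in\mathcal{F}_d$, then $f$ is a zero-divisor of $S/I$ and therefore, again by Theorem~\ref{zero-divisors}, $f$ lies in some associated prime $\mathfrak{p}$ of $S/I$; then $I\subset(I,f)\subset\mathfrak{p}$ forces $\dim(S/(I,f))\geq\dim(S/\mathfrak{p})=k$, with the reverse inequality trivial.

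The first equality of the theorem now reduces to a short case analysis. If $\mathcal{F}_d=\emptyset$, every $f\in S_d\setminus I$ is regular on $S/I$, so $(I\colon f)=I$ and both sides equal $\deg(S/I)$. If $\mathcal{F}_d\neq\emptyset$, the identity $\deg(S/(I\colon f))=\deg(S/I)-\deg(S/(I,f))$ for $f\in\mathcal{F}_d$ turns the maximum in the definition of $\delta_I(d)$ into a minimum over $\mathcal{F}_d$; for the remaining regular elements of $S_d\setminus I$, $\deg(S/(I\colon f))=\deg(S/I)>\delta_I(d)$ (strict because $\deg(S/(I,f))\geq 1$ for every $f\in\mathcal{F}_d$), so enlarging the minimum from $\mathcal{F}_d$ to $S_d\setminus I$ does not change its value. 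The hypothesis $\mathfrak{m}^d\not\subset I$ is exactly what guarantees $S_d\setminus I\neq\emptyset$ so that the minimum is defined.

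For the second equality, the division algorithm (Theorem~\ref{division-algo}) attaches to each $f\in S_d\setminus I$ a unique standard representative $\widetilde{f}\in\Delta_\prec(I)_d^p$ with $\widetilde{f}-f\in I$, and $\widetilde{f}\neq 0$ since $f\notin I$; from $\widetilde{f}-f\in I$ one has $(I\colon\widetilde{f})=(I\colon f)$, so the two collections of colon ideals in the statement coincide and the minima agree. I expect the principal obstacle to be the dimension bookkeeping in the second paragraph: unmixedness is used twice—through the submodule embedding to control $\dim(S/(I\colon f))$, and through the containment in an associated prime to control $\dim(S/(I,f))$—and without this hypothesis the three Hilbert polynomials can have different degrees, collapsing the key additivity identity.
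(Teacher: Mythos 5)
Your proof is correct and follows essentially the same route as the paper: the graded exact sequence $0\to (S/(I\colon f))(-d)\xrightarrow{\,f\,} S/I\to S/(I,f)\to 0$ you use is just the paper's two exact sequences merged into one, and the rest (dimension check via unmixedness and associated primes, degree additivity by comparing leading terms of Hilbert polynomials or summing Hilbert functions when $\dim S/I=0$, division algorithm for the reduction to standard polynomials) matches the paper's argument. Your write-up merely makes explicit two points the paper leaves implicit — the verification that $S/(I\colon f)$ and $S/(I,f)$ have dimension $\dim S/I$, and why enlarging the minimum from $\mathcal{F}_d$ to $S_d\setminus I$ changes nothing — which is fine.
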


\begin{proof} The second equality is clear because by the division
algorithm any $f\in
S_d\setminus I$ can be written as $f=g+h$, where $g\in I$ and 
$h\in  \Delta_\prec(I)_d^p$, and $(I\colon f)=(I\colon h)$. Next we
show the first equality. If $\mathcal{F}_d=\emptyset$,
$\delta_I(d)=\deg(S/I)$ and for any $f\in S_d\setminus I$, one has that
$(I\colon f)$ is equal to $I$. Thus equality holds.
Assume that $\mathcal{F}_d\neq\emptyset$. Take $f\in \mathcal{F}_d$.
Using that $I$ is unmixed, it is not hard to see that 
$S/I$, $S/(I\colon f)$, and $S/(I,f)$ have the same dimension.
There are exact sequences
\begin{eqnarray*}
& &0\longrightarrow (I\colon f)/I 
\longrightarrow S/I\longrightarrow S/(I\colon f)
\longrightarrow 0,\mbox{ and }\\ 
& &\ \ \ \  \ \  0\longrightarrow
(I\colon f)/I \longrightarrow(S/I)[-d]\stackrel{f}{\longrightarrow}
S/I\longrightarrow
S/(I,f)\longrightarrow 0.
\end{eqnarray*}
Hence, by the additivity of Hilbert functions, we get 
\begin{equation}\label{dec1-15}
H_I(i)-H_{(I\colon f)}(i)=H_I{(i-d)}-H_I(i)+H_{(I,f)}(i)\ \mbox{
for }i\geq 0.
\end{equation}

By definition of $\delta_I(d)$ it suffices to show 
the following equality 
\begin{equation}\label{dec8-15} 
\deg(S/(I\colon f))=\deg(S/I)- \deg(S/(I, f)).
\end{equation}

If $\dim S/I=0$, then using Eq.~(\ref{dec1-15}) one has
$$
\sum_{i\geq 0}H_I(i)-\sum_{i\geq 0}H_{(I\colon f)}(i)=\sum_{i\geq
0}H_I{(i-d)}-\sum_{i\geq 0}H_I(i)+\sum_{i\geq 0}H_{(I,f)}(i).
$$

Hence, using the definition of degree, the
equality of Eq.~(\ref{dec8-15}) follows. If $k=\dim S/I-1\geq 0$, by
the Hilbert theorem, $H_{I}$, $H_{(I,f)}$, and $H_{(I\colon f)}$ are polynomial
functions of degree $k$. Then dividing 
Eq.~(\ref{dec1-15}) by $i^{k}$ and taking limits as $i$ goes to
infinity, the equality of Eq.~(\ref{dec8-15}) holds.
\end{proof}

We come to one of our main results.

\begin{theorem}\label{footprint-lowerbound}
Let $\prec$ be a monomial order and let $I\subset S$ be an unmixed ideal of dimension
$\geq 1$ such that $t_i$ is a zero-divisor of $S/I$ for
$i=1,\ldots,s$. The following hold. 
\begin{itemize}
\item[(i)] The set $\mathcal{F}_d=\{f\in S_d\colon
f\notin I,\, (I\colon f)\neq I\}$ is not empty for $d\geq 1$.
\item[(ii)] $\delta_I(d)\geq {\rm fp}_I(d)$ for $d\geq 1$.
\item[(iii)] $\deg S/(I,t^a)\leq\deg S/({\rm
in}_\prec(I),t^a)\leq\deg S/I$ for any
$t^a\in\Delta_\prec(I)\cap S_d$.
\item[(iv)] ${\rm fp}_I(d)\geq 0$.
\item[(v)] $\delta_I(d)\geq \delta_I(d+1)\geq 0$ 
for $d\geq 1$. 
\item[(vi)] If $I$ is a radical ideal and its associated primes 
are generated by linear forms, then there is an integer $r\geq 1$ such that 
$$
\delta_I(1)>\cdots>\delta_I(r)=\delta_I(d)=1\
\mbox{ for }\ d\geq r.
$$
\end{itemize}
\end{theorem}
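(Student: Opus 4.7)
The plan is to prove the six parts in order, leveraging earlier ones as they become available. For (i), observe that if $t_i^d\in I$ for every $i$, then $\sqrt{I}\supseteq\mathfrak{m}$, forcing $\dim(S/I)=0$ and contradicting the hypothesis; so some $t_i^d\notin I$, and because $t_i$ lies in some associated prime of $S/I$ (the zero-divisor hypothesis), so does $t_i^d$, placing it in $\mathcal{F}_d$. The same observation yields (iii): every monomial $t^a$ of positive degree is a zero-divisor on $S/I$ (take the associated prime containing a variable $t_i$ with $a_i>0$), so Lemma~\ref{degree-initial-footprint} applied to $f=t^a$ (with ${\rm in}_\prec(t^a)=t^a$) delivers both inequalities. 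Part (iv) is then immediate from (iii) and the definition of ${\rm fp}_I(d)$.

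For (ii), given $f\in\mathcal{F}_d$ I would run the division algorithm to write $f=g+h$ with $g\in I$ and $h$ a nonzero standard polynomial of degree $d$. Since $(I\colon h)=(I\colon f)\neq I$ and $(I,f)=(I,h)$, Lemma~\ref{degree-initial-footprint} produces $\deg(S/(I,h))\leq\deg(S/({\rm in}_\prec(I),{\rm in}_\prec(h)))$, and because ${\rm in}_\prec(h)\in\Delta_\prec(I)_d$, taking the maximum of the left-hand side over $f\in\mathcal{F}_d$ and of the right-hand side over $t^a\in\Delta_\prec(I)_d$ establishes $\delta_I(d)\geq{\rm fp}_I(d)$.

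For (v), the bound $\delta_I(d+1)\geq 0$ is immediate from (ii) and (iv). For monotonicity I would pick $f\in\mathcal{F}_d$ realizing the maximum of $\deg(S/(I,\cdot))$ and try the lift $g=t_jf$ for a suitable $j$. The key point is that some $j$ satisfies $t_jf\notin I$: otherwise $\mathfrak{m}\subseteq(I\colon f)\subsetneq S$ forces $(I\colon f)=\mathfrak{m}$, exhibiting $\mathfrak{m}$ as an associated prime of $I$, which is impossible for unmixed $I$ with $\dim(S/I)\geq 1$. Such a $g$ remains a zero-divisor because any associated prime containing $f$ still contains $g$, so $g\in\mathcal{F}_{d+1}$; the inclusion $(I,g)\subseteq(I,f)$ gives $H_{(I,g)}(n)\geq H_{(I,f)}(n)$ pointwise, and because both quotients share the Krull dimension $\dim(S/I)$ (the equidimensionality argument from the proof of Lemma~\ref{degree-initial-footprint}), comparing the leading Hilbert coefficients yields $\deg(S/(I,g))\geq\deg(S/(I,f))$ and hence $\delta_I(d+1)\leq\delta_I(d)$. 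I expect the main technical content of the theorem to sit here: the Hilbert-function comparison only produces the desired degree inequality after equidimensionality is in hand, and the unmixedness hypothesis combined with $\dim(S/I)\geq 1$ is precisely what blocks $(I\colon f)$ from collapsing to $\mathfrak{m}$.

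For (vi) I would use $I=\bigcap_{i=1}^m\mathfrak{p}_i$ (radicality) and $\deg(S/\mathfrak{p}_i)=1$ (linear generation of each $\mathfrak{p}_i$), so $\deg(S/I)=m$ by additivity and Lemma~\ref{jul11-15} rewrites $\deg(S/(I,f))=|\{i\colon f\in\mathfrak{p}_i\}|$ for $f\in\mathcal{F}_d$. Consequently $\delta_I(d)=\min\{|\{i\colon f\notin\mathfrak{p}_i\}|\colon f\in\mathcal{F}_d\}\geq 1$, since $f\notin I$ avoids at least one prime. To realize $\delta_I(d)=1$ for all large $d$, fix $j$, pick linear forms $\ell_i\in(\mathfrak{p}_i)_1\setminus\mathfrak{p}_j$ for $i\neq j$ (available because $\mathfrak{p}_i\not\subseteq\mathfrak{p}_j$ and $\mathfrak{p}_i$ is linearly generated), pick a variable $t_k\notin\mathfrak{p}_j$ (using $\mathfrak{p}_j\neq\mathfrak{m}$), and form $f_d=t_k^{d-m+1}\prod_{i\neq j}\ell_i$ for $d\geq m-1$; this $f_d$ lies in every $\mathfrak{p}_i$ with $i\neq j$ and avoids $\mathfrak{p}_j$. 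For the strict drop while $\delta_I(d)>1$, a maximizing $f$ avoids at least two primes $\mathfrak{p}_{j_1},\mathfrak{p}_{j_2}$, so multiplying by a linear form in $\mathfrak{p}_{j_1}\setminus\mathfrak{p}_{j_2}$ yields $g\in\mathcal{F}_{d+1}$ that is not in $\mathfrak{p}_{j_2}$, is a zero-divisor, and lies in strictly more primes than $f$, forcing $\delta_I(d+1)<\delta_I(d)$ and completing the proof.
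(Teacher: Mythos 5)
Your proof is correct and follows essentially the same route as the paper: powers of a variable for (i), the division algorithm plus Lemma~\ref{degree-initial-footprint} for (ii)--(iv), multiplication by a linear form (you use a variable) together with a Hilbert-function/degree comparison in equal dimension for (v), and Lemma~\ref{jul11-15} with counting of associated primes for (vi). The only cosmetic differences are your surjection argument in place of the paper's short exact sequence in (v), and your explicit construction of a polynomial avoiding exactly one prime in (vi), which the paper instead obtains from the fact that a non-increasing integer sequence bounded below by $1$ that strictly decreases while above $1$ must stabilize at $1$.
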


\begin{proof} (i): Since $\dim(S/I)\geq 1$, there is $1\leq \ell\leq s$ such that
$t_\ell^d$ is not in $I$, and $(I\colon t_\ell^d)\neq I$ because
$t_\ell^d$ is a zero-divisor of $S/I$. Thus $t_\ell^{d}$ is in
$\mathcal{F}_d$.

(ii): The set $\mathcal{F}_d$ is not empty for $d\geq 1$ by
part (i). Pick a polynomial $F$ of degree $d$ such that 
$\delta_I(d)=\deg(S/I)-\deg(S/(I,F))$, $F\notin I$, and $(I\colon
F)\neq I$. We may assume that $F$ is a sum of standard monomials of
$S/I$ with respect to $\prec$ (this follows using a 
Gr\"obner basis of $I$ and the division algorithm). Then, by
Lemma~\ref{degree-initial-footprint}, we get 
$$
\deg S/(I,F)\leq\deg S/({\rm
in}_\prec(I),{\rm in}_\prec(F))\leq\deg S/I,
$$
where ${\rm in}_\prec(F)$ is a standard monomial of $S/I$. Therefore 
$\delta_I(d)\geq {\rm fp}_I(d)$ for $d\geq 1$.

(iii), (iv): Since any standard monomial of degree $d$ is a
zero-divisor, by Lemma~\ref{degree-initial-footprint}, we get 
the inequalities in item (iii). Part (iv) follows at once from part
(iii).

(v): The set $\mathcal{F}_d$ is not empty for $d\geq 1$ by part (i). 
Then, by parts (ii) and (iv), $\delta_I(d)\geq 0$. Pick $F\in S_d$ such
that $F\notin I$, $(I\colon F)\neq I$ and 
$$
\deg(S/(I,F))=\max\{\deg(S/(I,f))\vert\,
f\notin I,\, f\in S_d,\, (I\colon f)\neq I\}.
$$

There is $h\in S_1$ such that $hF\notin I$, because otherwise 
$\mathfrak{m}=(t_1,\ldots,t_s)$ is an associated prime of $S/I$,
a contradiction to the assumption that $I$ is unmixed of dimension
$\geq 1$. As $F$ is a zero-divisor of $S/I$, so is $hF$. The ideals
$(I,F)$ and $(I,hF)$ have height equal to ${\rm ht}(I)$. Therefore taking Hilbert
functions in the exact sequence 
$$
0\longrightarrow (I,F)/(I,hF)\longrightarrow S/(I,hF)\longrightarrow
S/(I,F)\longrightarrow 0
$$
it follows that $\deg(S/(I,hF))\geq \deg(S/(I,F))$. This proves that
$\delta_I(d)\geq \delta_I(d+1)$. 

(vi): By Lemma~\ref{degree-initial-footprint}, $\delta_I(d)\geq 1$
for $d\geq 1$. Assume that $\delta_I(d)>1$. By part (v) it suffices to
show that $\delta_I(d)>\delta_I(d+1)$. Pick a polynomial $F$ as in
part (v). Let $\mathfrak{p}_1,\ldots,\mathfrak{p}_m$ be the associated
primes of $I$. Then, by Lemma~\ref{jul11-15}, one has
\begin{eqnarray*}
\delta_I(d)&=&\deg(S/I)-\deg(S/(I,F))\\ 
&=&\sum_{i=1}^m\deg(
S/\mathfrak{p}_i)-\sum_{F\in\mathfrak{p}_i}\deg(S/\mathfrak{p}_i)\geq 2.
\end{eqnarray*}
Hence there are $\mathfrak{p}_k\neq\mathfrak{p}_j$ such that $F$ is
not in $\mathfrak{p}_k\cup \mathfrak{p}_j$. Pick a linear form $h$
in $\mathfrak{p}_k\setminus\mathfrak{p}_j$; which exists because $I$
is unmixed and $\mathfrak{p}_k$ is generated by linear forms. Then $h F\notin I$
because $h F\notin\mathfrak{p}_j$, and $h F$ is a
zero-divisor of $S/I$ because $(I\colon F)\neq I$. Noticing that 
$F\notin\mathfrak{p}_k$ and $h F\in\mathfrak{p}_k$, by
Lemma~\ref{jul11-15}, we get
$$
\deg(S/(I,F))=\sum_{F\in\mathfrak{p}_i}\deg(S/\mathfrak{p}_i)<
\sum_{h F\in\mathfrak{p}_i}\deg(S/\mathfrak{p}_i)=\deg
(S/(I,h F)).
$$
Therefore $\delta_I(d)>\delta_I(d+1)$.
\end{proof}

\begin{corollary} If  $I\subset S$ is a Cohen-Macaulay square-free
monomial ideal, then there is an integer $r\geq 1$ such that 
$$
\delta_I(1)>\cdots>\delta_I(r)=\delta_I(d)=1\
\mbox{ for }\ d\geq r.
$$
\end{corollary}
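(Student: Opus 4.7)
\textit{Plan.} My strategy is to derive this corollary as a direct application of Theorem~\ref{footprint-lowerbound}(vi). To do so, I verify that a Cohen--Macaulay square-free monomial ideal $I\subset S$ satisfies the four standing hypotheses of that part: unmixedness, $\dim(S/I)\geq 1$, every $t_i$ being a zero-divisor on $S/I$, and the radical/linear-prime condition.

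The radical and linear-prime conditions are immediate: any square-free monomial ideal is radical, and its minimal primes---which are exactly its associated primes, since Cohen--Macaulayness excludes embedded components---are the Stanley--Reisner primes $\mathfrak{p}_F=(t_i\colon i\notin F)$ indexed by the facets $F$ of the underlying simplicial complex, each generated by a subset of the variables. Cohen--Macaulayness also forces these primes to share a common height, giving unmixedness. For the dimension condition, if $\dim(S/I)=0$ then $I=\mathfrak{m}$; in this case $\mathcal{F}_d=\emptyset$ for all $d\geq 1$ and $\delta_I(d)=\deg(S/I)=1$, so the corollary holds trivially with $r=1$. Hence I may assume $\dim(S/I)\geq 1$.

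The only hypothesis that can genuinely fail is that every variable $t_i$ be a zero-divisor on $S/I$. If it does hold, Theorem~\ref{footprint-lowerbound}(vi) applies immediately and delivers the claim. Otherwise, let $U=\{i\colon t_i\text{ is a zero-divisor on }S/I\}$; each $t_j$ with $j\notin U$ lies in no associated prime of $I$ and hence does not divide any minimal monomial generator of $I$. Thus $I=I'S$ with $I'=I\cap S'$ and $S'=K[t_i\colon i\in U]$, where $I'$ is a Cohen--Macaulay square-free monomial ideal of $S'$ in which every variable is a zero-divisor. A direct comparison of the defining formulas of $\delta_I$ and $\delta_{I'}$, using the polynomial extension $S/I\cong(S'/I')[t_j\colon j\notin U]$, yields $\delta_I(d)=\delta_{I'}(d)$, reducing to the previous case. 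The main obstacle is this cone-point reduction: rigorously establishing $\delta_I=\delta_{I'}$ requires pairing each zero-divisor $f\in S_d\setminus I$ with a zero-divisor $g\in S'_d\setminus I'$ and, via Lemma~\ref{jul11-15} together with preservation of multiplicity under polynomial extensions, matching $\deg(S/(I,f))$ with $\deg(S'/(I',g))$.
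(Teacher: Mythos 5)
Your proposal takes essentially the same route as the paper's proof: verify the hypotheses of Theorem~\ref{footprint-lowerbound}(vi) (square-free monomial ideals are radical with associated primes generated by variables, and Cohen--Macaulayness gives unmixedness), handle the degenerate case, and reduce to the situation where every variable is a zero-divisor by discarding the variables lying in no associated prime --- precisely the step the paper compresses into ``we may assume that all variables are zero-divisors of $S/I$'' (the paper instead disposes of all coordinate primes up front, which also covers the positive-dimensional primes that your reduction sends to your zero-dimensional case rather than to Theorem~\ref{footprint-lowerbound}(vi)). The identity $\delta_I=\delta_{I'}$ that you flag as the main obstacle is true and closes in a few lines: write a zero-divisor $f\in S_d\setminus I$ as $\sum_\beta c_\beta\,t^\beta$ in the discarded variables with coefficients $c_\beta\in S'$, pick $c_{\beta_0}\notin I'$, note that every associated prime containing $f$ (each generated by variables of $S'$) contains $c_{\beta_0}$, and multiply by $t_{i_0}^{|\beta_0|}$ for a variable $t_{i_0}$ outside some associated prime not containing $c_{\beta_0}$ to get $g\in S'_d\setminus I'$ with $\deg(S'/(I',g))\geq\deg(S/(I,f))$ by Lemma~\ref{jul11-15}, the reverse inequality between the two maxima being immediate.
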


\begin{proof} If $I$ is prime, then $I$ is generated by a subset of
$\{t_1,\ldots,t_s\}$, $\deg(S/I)=1$, and $\mathcal{F}_d=\emptyset$ for
all $d$. Hence $\delta_I(d)=1$ for $d\geq 1$. Thus we may assume that $I$ has at least
two associated primes. Any Cohen-Macaulay ideal is unmixed
\cite{monalg-rev}. Thus the degree of $S/I$ is the number of associated
primes of $I$. Hence, we may assume that all variables are
zero-divisors of $S/I$ and the result follows from
Theorem~\ref{footprint-lowerbound}(vi).
\end{proof}

The next result gives an algebraic
formulation of the minimum distance of a projective Reed-Muller-type code 
in terms of the degree and the structure of the underlying
vanishing ideal.

\begin{theorem}\label{min-dis-vi} 
Let $K$ be a field and let $\mathbb{X}$ be a finite subset of
$\mathbb{P}^{s-1}$. If  $|\mathbb{X}|\geq 2$, then 
$$\delta_\mathbb{X}(d)=\delta_{I(\mathbb{X})}(d)\geq 1\ \mbox{ for
}d\geq 1.$$
\end{theorem}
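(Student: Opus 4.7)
The plan is to rewrite both sides of the desired equality as ``$|\mathbb{X}|$ minus a maximum of $|V_\mathbb{X}(f)|$'' over an appropriate family of $f$'s and then use the degree formula of Lemma~\ref{degree-formula-for-the-number-of-zeros-proj} to identify those maxima.

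First I would recall that $\deg(S/I(\mathbb{X}))=|\mathbb{X}|$ by Proposition~\ref{jan4-15}(ii), and that for any $f\in S_d$ with $f\notin I(\mathbb{X})$ one has
$$\|{\rm ev}_d(f)\|=|\mathbb{X}|-|V_{\mathbb{X}}(f)|,$$
since the zero coordinates of ${\rm ev}_d(f)$ are exactly the points $[P_i]$ at which $f$ vanishes. Taking the minimum over nonzero codewords yields
$$\delta_{\mathbb{X}}(d)=|\mathbb{X}|-\max\{|V_{\mathbb{X}}(f)|\colon f\in S_d,\, f\notin I(\mathbb{X})\},$$
which is well defined because Proposition~\ref{jan4-15}(v) guarantees $C_{\mathbb{X}}(d)\neq (0)$, and moreover forces $\delta_{\mathbb{X}}(d)\geq 1$.

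Next I would split the admissible $f$'s into two classes: those with $(I(\mathbb{X})\colon f)=I(\mathbb{X})$ (regular on $S/I(\mathbb{X})$) and those with $(I(\mathbb{X})\colon f)\neq I(\mathbb{X})$, i.e.\ those in $\mathcal{F}_d$. By Lemma~\ref{degree-formula-for-the-number-of-zeros-proj}, the regular ones satisfy $|V_{\mathbb{X}}(f)|=0$, while for $f\in\mathcal{F}_d$ one has $|V_{\mathbb{X}}(f)|=\deg(S/(I(\mathbb{X}),f))$. The hypothesis $|\mathbb{X}|\geq 2$ together with $d\geq 1$ allows us to invoke Lemma~\ref{oct16-15}(iv) to guarantee $\mathcal{F}_d\neq\emptyset$; moreover any $f\in\mathcal{F}_d$ belongs to some associated prime $I_{[P]}$ of $I(\mathbb{X})$ (by Theorem~\ref{zero-divisors} and Lemma~\ref{primdec-ix-a}), hence vanishes at $[P]\in\mathbb{X}$, giving $|V_{\mathbb{X}}(f)|\geq 1>0$.

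Therefore the maximum of $|V_{\mathbb{X}}(f)|$ over $f\in S_d\setminus I(\mathbb{X})$ is attained on $\mathcal{F}_d$, and equals
$$\max\{\deg(S/(I(\mathbb{X}),f))\colon f\in\mathcal{F}_d\}.$$
Substituting this into the expression for $\delta_{\mathbb{X}}(d)$ and using $|\mathbb{X}|=\deg(S/I(\mathbb{X}))$ yields exactly the definition of $\delta_{I(\mathbb{X})}(d)$, completing the proof. The only delicate step is ensuring $\mathcal{F}_d$ is nonempty and that it contains polynomials with strictly positive zero counts, both of which are handled cleanly by Lemma~\ref{oct16-15}; the rest is bookkeeping around the degree formula.
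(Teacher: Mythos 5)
Your proposal is correct and follows essentially the same route as the paper: both rewrite $\delta_\mathbb{X}(d)$ as $|\mathbb{X}|-\max\{|V_\mathbb{X}(f)|\colon {\rm ev}_d(f)\neq 0\}$, use Lemma~\ref{oct16-15} to see that $\mathcal{F}_d\neq\emptyset$, and apply Lemma~\ref{degree-formula-for-the-number-of-zeros-proj} together with $\deg(S/I(\mathbb{X}))=|\mathbb{X}|$ to identify this maximum with $\max\{\deg(S/(I(\mathbb{X}),f))\colon f\in\mathcal{F}_d\}$. Your extra remarks (regular elements contribute zero, zero-divisors vanish at some point of $\mathbb{X}$) simply make explicit why the maximum is attained on $\mathcal{F}_d$, which the paper leaves implicit.
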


\begin{proof} Setting $I=I(\mathbb{X})$, by Lemma~\ref{oct16-15}, 
the set $\mathcal{F}_d:=\{\, f\in S_d\colon
f\notin I,\, (I\colon f)\neq I\}$ is not empty for $d\geq 1$. 
Hence, using the formula for $V_\mathbb{X}(f)$ of 
Lemma~\ref{degree-formula-for-the-number-of-zeros-proj}, we
obtain
$$
\max\{|V_\mathbb{X}(f)|\colon {\rm ev}_d(f)\neq 0; f\in S_d\}=
\max\{\deg(S/(I,f))\vert\,
f\in \mathcal{F}_d\}.
$$
Therefore, using that $\deg(S/I)=|\mathbb{X}|$, we get    
\begin{eqnarray*}
\delta_{\mathbb{X}}(d)&=&\min\{\|{\rm ev}_d(f)\|
\colon {\rm ev}_d(f)\neq 0; f\in
S_d\}\\
&=&|\mathbb{X}|-\max\{|V_{\mathbb{X}}(f)|\colon {\rm ev}_d(f)\neq 0;
f\in S_d\}\\ 
&=&\deg(S/I)-\max\{\deg(S/(I,f))\vert\,
f\in\mathcal{F}_d\}=\delta_I(d),
\end{eqnarray*}
where $\|{\rm ev}_d(f)\|$ is the number of non-zero
entries of ${\rm ev}_d(f)$. 
\end{proof}

If $I$ is a graded ideal and $\Delta_\prec(I)\cap
S_d=\{t^{a_1},\ldots,t^{a_n}\}$, recall that 
$\mathcal{F}_{\prec, d}$ is the set of homogeneous 
standard polynomials of $S/I$ of degree $d$ which are
zero-divisors of $S/I$:
$$
\mathcal{F}_{\prec, d}:=\left.\left\{f=\textstyle\sum_i{\lambda_i}t^{a_i}\, \right|\,
f\neq 0,\ \lambda_i\in
K,\, (I\colon f)\neq I\right\}.
$$

The next result gives a description of the minimum distance which 
is suitable for computing this number using a computer algebra
system such as {\em Macaulay\/}$2$ \cite{mac2}. 

\begin{corollary}\label{oct15-15-1} If $K=\mathbb{F}_q$, $|\mathbb{X}|\geq 2$,
$I=I(\mathbb{X})$, and $\prec$ a monomial order, then
$$
\delta_\mathbb{X}(d)=\deg S/I-\max\{\deg(S/(I,f))\vert\,
f\in \mathcal{F}_{\prec, d}\}\geq 1\ \mbox{
for }\ d\geq 1.
$$
\end{corollary}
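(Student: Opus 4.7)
The plan is to combine Theorem~\ref{min-dis-vi} with a reduction from $\mathcal{F}_d$ to $\mathcal{F}_{\prec,d}$ via the division algorithm, exactly as outlined in the introduction. Since $|\mathbb{X}|\geq 2$, Theorem~\ref{min-dis-vi} gives $\delta_\mathbb{X}(d)=\delta_{I(\mathbb{X})}(d)\geq 1$ for $d\geq 1$, and Lemma~\ref{oct16-15}(iv) ensures that $\mathcal{F}_d\neq\emptyset$, so by definition
\[
\delta_I(d)=\deg(S/I)-\max\{\deg(S/(I,f))\mid f\in\mathcal{F}_d\}.
\]
Thus the task reduces to showing that the maximum over $\mathcal{F}_d$ agrees with the maximum over $\mathcal{F}_{\prec,d}$.

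For the inequality $\leq$ in that max-equality, I would fix a Gröbner basis $\mathcal{G}=\{g_1,\ldots,g_r\}$ of $I$ with respect to $\prec$, and given any $f\in\mathcal{F}_d$, apply Theorem~\ref{division-algo} to write $f=a_1g_1+\cdots+a_rg_r+h$ with $h$ either zero or a $K$-linear combination of standard monomials, all of degree $d$ by homogeneity of $f$ and of the $g_i$'s (after taking homogeneous components one may assume $a_i g_i$ is homogeneous of degree $d$). Since $f\notin I$, necessarily $h\neq 0$. Then $(I,f)=(I,h)$ and $(I\colon f)=(I\colon h)$, so $h\in\mathcal{F}_{\prec,d}$ and $\deg(S/(I,f))=\deg(S/(I,h))$. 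This shows each value attained over $\mathcal{F}_d$ is already attained over $\mathcal{F}_{\prec,d}$. The reverse inclusion $\mathcal{F}_{\prec,d}\subset\mathcal{F}_d$ is immediate, giving equality of the two maxima and hence the stated formula.

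The main ``obstacle'' is merely bookkeeping: checking that in applying the division algorithm one can take the remainder $h$ homogeneous of the same degree $d$ as $f$, and that $(I,f)=(I,h)$ and $(I\colon f)=(I\colon h)$ since $f-h\in I$. Both facts are entirely formal. Finally, the bound $\delta_\mathbb{X}(d)\geq 1$ is inherited directly from Theorem~\ref{min-dis-vi}, so the corollary follows.
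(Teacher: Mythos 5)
Your proposal is correct and follows essentially the same route as the paper: invoke Theorem~\ref{min-dis-vi} and then use the division algorithm to replace an arbitrary $f\in\mathcal{F}_d$ by a standard polynomial $h$ with $(I\colon f)=(I\colon h)$ and $(I,f)=(I,h)$, so the two maxima coincide. The paper's proof is just a terser version of this same argument.
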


\begin{proof} It follows from Theorem~\ref{min-dis-vi} because by the
division algorithm any polynomial $f\in S_d$ can be written as $f=g+h$,
where $g$ is in $I_d$ and $h$ is a $K$-linear combination of standard
monomials of degree $d$. Notice that $(I\colon f)=(I\colon h)$. 
\end{proof}

The expression for $\delta_\mathbb{X}(d)$ of Corollary~\ref{oct15-15-1}
gives and algorithm that can be implemented in {\em Macaulay\/}$2$
\cite{mac2} to compute $\delta_\mathbb{X}(d)$ (see
Example~\ref{example-min-dis}). However, in practice, we can only find the minimum distance for 
small values of $q$ and $d$. Indeed, 
if $n=|\Delta_\prec(I)\cap
S_d|$, to compute $\delta_{I(\mathbb{X})}$ requires to test the inequality
$(I(\mathbb{X})\colon f)\neq I(\mathbb{X})$ and compute the
corresponding degree of $S/(I(\mathbb{X}),f)$ for the $n^q-1$
standard polynomials of $S/I$. 

\begin{corollary}\label{footprint-lowerbound-vanishing}
Let $K$ be a field, let $\prec$ be a monomial order, and let
$\mathbb{X}$ be a finite subset 
of $\mathbb{P}^{s-1}$. If 
$t_i$ is a zero-divisor of $S/I(\mathbb{X})$ for $i=1,\ldots,s$, 
then $\delta_\mathbb{X}(d)\geq {\rm fp}_{I(\mathbb{X})}(d)\geq 0$ 
for $d\geq 1$.
\end{corollary}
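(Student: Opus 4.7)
The plan is to assemble this corollary as a direct consequence of Theorem~\ref{min-dis-vi} together with parts (ii) and (iv) of Theorem~\ref{footprint-lowerbound}. The work boils down to verifying that $I=I(\mathbb{X})$ satisfies the hypotheses of the latter theorem, namely that $I$ is an unmixed graded ideal with $\dim(S/I)\geq 1$ in which every $t_i$ is a zero-divisor.

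First I would note that by Lemma~\ref{primdec-ix-a}, $I(\mathbb{X})=\bigcap_{[P]\in\mathbb{X}}I_{[P]}$ is an intersection of prime ideals, each of height $s-1$; hence $I(\mathbb{X})$ is unmixed (and radical) of Krull dimension $1$. The hypothesis gives us the last required property, that each variable $t_i$ is a zero-divisor of $S/I(\mathbb{X})$. Next I would dispatch the mild issue that Theorem~\ref{min-dis-vi} requires $|\mathbb{X}|\geq 2$. This is forced by the hypotheses: if $\mathbb{X}=\{[P]\}$ with $P=(\alpha_1,\ldots,\alpha_s)$, then $I(\mathbb{X})=I_{[P]}$ is prime, so by Theorem~\ref{zero-divisors} the zero-divisors of $S/I(\mathbb{X})$ all lie in $I_{[P]}$; consequently $t_i$ being a zero-divisor for every $i$ would force $\alpha_i=0$ for every $i$, which is impossible for a projective point. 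Thus $|\mathbb{X}|\geq 2$ automatically.

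With the hypotheses verified, Theorem~\ref{footprint-lowerbound}(ii) and (iv) yield
\[
\delta_{I(\mathbb{X})}(d)\ \geq\ {\rm fp}_{I(\mathbb{X})}(d)\ \geq\ 0\qquad(d\geq 1),
\]
while Theorem~\ref{min-dis-vi} identifies $\delta_\mathbb{X}(d)=\delta_{I(\mathbb{X})}(d)$ for $d\geq 1$. Chaining these gives the claimed inequality.

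There is really no technical obstacle here: all the substantive content, both the footprint bound and the identification of $\delta_\mathbb{X}$ with $\delta_{I(\mathbb{X})}$, has already been proved in the preceding theorems. The only point that needs a moment of care is the side verification that the hypothesis on zero-divisors rules out the degenerate case $|\mathbb{X}|=1$, so that Theorem~\ref{min-dis-vi} actually applies.
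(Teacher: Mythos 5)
Your proof is correct and follows essentially the same route as the paper, which likewise deduces the corollary directly from Theorem~\ref{footprint-lowerbound} and Theorem~\ref{min-dis-vi}. Your extra verification that the zero-divisor hypothesis forces $|\mathbb{X}|\geq 2$ (and that $I(\mathbb{X})$ is unmixed of dimension $1$) fills in details the paper leaves implicit, and it is accurate.
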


\begin{proof} The inequalities $\delta_\mathbb{X}(d)\geq {\rm
fp}_{I(\mathbb{X})}(d)\geq 0$ follow from 
Theorems~\ref{footprint-lowerbound} and \ref{min-dis-vi}. 
\end{proof}

One can use Corollary~\ref{footprint-lowerbound-vanishing} to estimate
the minimum distance of any Reed-Muller-type code over a set
$\mathbb{X}$ parameterized by a set of 
relatively prime monomials because in
this case $t_i$ is
a zero-divisor of $S/I(\mathbb{X})$ for $i=1,\ldots,s$ and one has the
following result that can be used to compute the vanishing ideal of
$\mathbb{X}$ using Gr\"obner bases and elimination theory. 

\begin{theorem}{\cite{vanishing-ideals}}\label{vanishing-ideals-theo}
Let $K=\mathbb{F}_q$ be a finite field. If $\mathbb{X}$ is a subset
of $\mathbb{P}^{s-1}$ parameterized by  
monomials $y^{v_1},\ldots,y^{v_s}$ in the variables $y_1,\ldots,y_n$, then 
$$ 
I(\mathbb{X})=(\{t_i-y^{v_i}z\}_{i=1}^s\cup\{y_i^{q}-y_i\}_{i=1}^n)\cap
S,
$$
and $I(\mathbb{X})$ is a binomial ideal.
\end{theorem}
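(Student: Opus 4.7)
The plan is to interpret the right-hand side as the kernel of an elimination map and identify that kernel with $I(\mathbb{X})$ via explicit vanishing. Set $R = K[t_1,\ldots,t_s,y_1,\ldots,y_n,z]$ and
\[
J := (\{t_i-y^{v_i}z\}_{i=1}^s \cup \{y_i^{q}-y_i\}_{i=1}^n) \subset R,
\]
so the theorem asserts $I(\mathbb{X})=J\cap S$ and that this intersection is a binomial ideal.

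First I would verify that $J$ is homogeneous under the grading on $R$ given by $\deg t_i=\deg z=1$ and $\deg y_j=0$, so that $J\cap S$ is automatically homogeneous in the standard grading of $S$. Because every listed generator of $J$ is a binomial, a reduced Gr\"obner basis of $J$ with respect to a lexicographic order that eliminates $y_1,\ldots,y_n,z$ (putting the $t_i$'s last) remains binomial; the elements of this basis lying entirely in $S$ generate $J\cap S$, which yields the binomial ideal claim. For the inclusion $J\cap S\subset I(\mathbb{X})$, take a homogeneous $f\in J\cap S$ and write $f=\sum_i a_i(t_i-y^{v_i}z)+\sum_j b_j(y_j^q-y_j)$ in $R$. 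Given $[P]\in\mathbb{X}$, choose a parameterizing $y\in\mathbb{F}_q^n$ with $P=(y^{v_1},\ldots,y^{v_s})$; substituting $t_i\mapsto y^{v_i}$, $z\mapsto 1$, $y_j\mapsto y_j$ annihilates each generator on the right (the first batch tautologically, the second because $y_j\in\mathbb{F}_q$), and hence $f(P)=0$.

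For the reverse inclusion, take $f\in I(\mathbb{X})$ homogeneous of degree $d\geq 1$; reducing modulo $\{t_i-y^{v_i}z\}_i$ gives $f(t)\equiv z^{d}g(y)$, where $g(y):=f(y^{v_1},\ldots,y^{v_s})\in K[y_1,\ldots,y_n]$. It then suffices to show $g\in(y_1^q-y_1,\ldots,y_n^q-y_n)$, and this is the well-known vanishing ideal of $\mathbb{F}_q^n$ in $K[y_1,\ldots,y_n]$, so the task reduces to showing $g$ vanishes at every point of $\mathbb{F}_q^n$. For $y\in\mathbb{F}_q^n$ with $(y^{v_1},\ldots,y^{v_s})\neq 0$, its projective class lies in $\mathbb{X}$ and $f$ vanishes there, so $g(y)=0$; for the remaining $y$'s one has $(y^{v_1},\ldots,y^{v_s})=0$ and homogeneity with $d\geq 1$ forces $g(y)=f(0,\ldots,0)=0$. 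The crux, and the step I expect to require the most care, is this boundary analysis: the points $y$ at which the monomial image degenerates do not correspond to points of $\mathbb{X}$, and one must lean on the homogeneity of $f$ rather than on the vanishing hypothesis. Once this is in place, the statement follows from a routine combination of elimination theory and the standard description of the vanishing ideal of the affine space $\mathbb{F}_q^n$.
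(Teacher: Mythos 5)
Your argument is correct: both inclusions and the binomial claim are established soundly, and your reading of ``parameterized by monomials'' (parameters ranging over all of $\mathbb{F}_q^n$, which is exactly what the generators $y_i^q-y_i$ encode) matches the paper's usage. The paper itself gives no proof of this statement---it is quoted from \cite{vanishing-ideals}---and your elimination-theoretic route (evaluation/substitution for $J\cap S\subseteq I(\mathbb{X})$, reduction of a homogeneous $f$ of degree $d\geq 1$ to $z^d g(y)$ together with the fact that $(y_1^q-y_1,\ldots,y_n^q-y_n)$ is the vanishing ideal of $\mathbb{F}_q^n$ for the converse, and binomial Gr\"obner bases under an elimination order for the last claim) is essentially the standard proof given in that reference.
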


As an application, Corollary~\ref{footprint-lowerbound-vanishing} will be 
used to study the minimum distance of projective nested
cartesian codes \cite{carvalho-lopez-lopez} over a 
set $\mathcal{X}$. In this case $t_i$ is
a zero-divisor of $S/I(\mathcal{X})$ for $i=1,\ldots,s$ and one has a
Gr\"obner basis for $I(\mathcal{X})$ \cite{carvalho-lopez-lopez} (see
Section~\ref{pncc-section}). 

\section{Degree formulas and some
inequalities}\label{degree-formulas-section}

Let $S=K[t_1,\ldots,t_s]$ be a polynomial ring over a field $K$, let
$d_1,\ldots,d_s$ be a non-decreasing sequence of positive integers
with $d_1\geq 2$ and $s\geq 2$, and let $L$ be the
ideal of $S$ generated by the set of all $t_it_j^{d_j}$ such that
$1\leq i<j\leq s$. In this section we show a formula for 
the degree of $S/(L,t^a)$ for any standard monomial $t^a$ of $S/L$.   

\begin{lemma}
\label{yuriko-vila} 
The ideal $L$ is Cohen-Macaulay of height $s-1$, has a unique 
irredundant primary decomposition
given by
$$
L=\mathfrak{q}_1\cap\cdots\cap\mathfrak{q}_s,
$$
where $\mathfrak{q}_i=
(t_1,\ldots,t_{i-1},t_{i+1}^{d_{i+1}},\ldots,t_s^{d_s})$ for $1\leq
i\leq s$, and $\deg(S/L)=1+\sum_{i=2}^sd_i\cdots d_s$.
\end{lemma}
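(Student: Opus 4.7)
The plan is to verify the stated primary decomposition directly, read off the height and degree from it, and then use a short prime avoidance argument for Cohen-Macaulayness.

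First, I would prove $L=\mathfrak{q}_1\cap\cdots\cap\mathfrak{q}_s$. The inclusion $L\subseteq\bigcap_i\mathfrak{q}_i$ is checked on generators: given a generator $t_it_j^{d_j}$ with $i<j$ and any index $k$, either $k\leq i$ (so $j\geq k+1$ and $t_j^{d_j}\in\mathfrak{q}_k$) or $k>i$ (so $t_i\in\mathfrak{q}_k$). For the reverse inclusion, since both sides are monomial ideals it is enough to show every monomial $t^a\notin L$ lies outside some $\mathfrak{q}_k$. If $t^a\neq 1$, let $k$ be the smallest index with $a_k\geq 1$; then $a_j=0$ for $j<k$ and, since $t_kt_j^{d_j}\nmid t^a$ for $j>k$, we also have $a_j\leq d_j-1$ for all $j>k$. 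Hence none of the generators $t_1,\ldots,t_{k-1},t_{k+1}^{d_{k+1}},\ldots,t_s^{d_s}$ of $\mathfrak{q}_k$ divides $t^a$, so $t^a\notin\mathfrak{q}_k$ (and $1\notin\mathfrak{q}_1$).

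Each $\mathfrak{q}_i$ is generated by pure powers of $s-1$ distinct variables, so its radical is the prime $\mathfrak{p}_i=(t_j\colon j\neq i)$ of height $s-1$, and $\mathfrak{q}_i$ itself is $\mathfrak{p}_i$-primary. The $\mathfrak{p}_i$ are pairwise distinct, so the decomposition is irredundant; moreover all associated primes have height $s-1$, so they are all minimal over $L$, which forces uniqueness of the primary decomposition. This gives ${\rm ht}(L)=s-1$ and $\dim(S/L)=1$. For Cohen-Macaulayness, the linear form $h=t_1+\cdots+t_s$ has nonzero $t_i$-coefficient for every $i$, so $h\notin\mathfrak{p}_i$ for any $i$; by Theorem~\ref{zero-divisors}, $h$ is a non-zero-divisor on $S/L$, whence ${\rm depth}(S/L)\geq 1=\dim(S/L)$ and $S/L$ is Cohen-Macaulay.

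For the degree, I would apply additivity (Proposition~\ref{additivity-of-the-degree}). For each $i$ one has $S/\mathfrak{q}_i\cong K[t_i,t_{i+1},\ldots,t_s]/(t_{i+1}^{d_{i+1}},\ldots,t_s^{d_s})$, which is a free $K[t_i]$-module of rank $d_{i+1}\cdots d_s$ (interpreting the empty product as $1$ when $i=s$), so $\deg(S/\mathfrak{q}_i)=d_{i+1}\cdots d_s$. Summing over $i=1,\ldots,s$ yields $\deg(S/L)=1+d_s+d_{s-1}d_s+\cdots+d_2\cdots d_s=1+\sum_{i=2}^{s}d_i\cdots d_s$. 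The only step requiring care is the verification of the primary decomposition; once that is in hand, everything else follows from standard facts about complete intersections of pure powers and additivity of degree.
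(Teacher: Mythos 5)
Your proof is correct, but it runs in the opposite logical direction from the paper's. The paper first proves $S/L$ is Cohen--Macaulay by induction on $s$ together with the depth lemma, deduces from this that $L$ is unmixed (so there are no embedded components), identifies the minimal primes $\mathfrak{p}_i$ from ${\rm rad}(L)=(\{t_it_j\}_{i<j})$, and then obtains each component by localization: inverting $t_i$ gives $LS_{\mathfrak{p}_i}=\mathfrak{q}_iS_{\mathfrak{p}_i}$, and since $\mathfrak{q}_i$ is irreducible, hence $\mathfrak{p}_i$-primary, the uniquely determined $\mathfrak{p}_i$-primary component $LS_{\mathfrak{p}_i}\cap S$ equals $\mathfrak{q}_i$. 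You instead verify $L=\bigcap_i\mathfrak{q}_i$ directly by monomial divisibility (your choice of $k$ as the smallest index with $a_k\geq 1$ is exactly the right pivot, and the check is complete), read off height, unmixedness and uniqueness from the fact that the distinct radicals all have height $s-1$ and are therefore minimal, and only then get Cohen--Macaulayness from the explicit regular element $h=t_1+\cdots+t_s$ (which avoids every $\mathfrak{p}_i$ since $h\equiv t_i\pmod{\mathfrak{p}_i}$, so the argument works over any field, including small finite ones), using ${\rm depth}\geq 1=\dim$. Your computation $\deg(S/\mathfrak{q}_i)=d_{i+1}\cdots d_s$ plus additivity matches the paper's degree count. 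What each approach buys: the paper's localization argument sidesteps the combinatorial verification of the intersection but needs the inductive depth-lemma step and the unmixedness of $L$ up front; yours is more elementary and self-contained, makes the decomposition the primary object, and gets Cohen--Macaulayness almost for free because $\dim(S/L)=1$, at the modest cost of the hands-on divisibility check and of stating explicitly the standard fact that components belonging to pairwise incomparable (hence minimal) associated primes can be neither omitted nor altered, which is what gives irredundance and uniqueness.
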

\begin{proof} Using induction on $s$ and the depth lemma (see
\cite[Lemma~2.3.9]{monalg-rev}) it is seen that $L$ is 
Cohen-Macaulay. In particular $L$ is unmixed. Since the radical of $L$ is generated by all $t_it_j$
with $i<j$, the minimal primes of $L$ are
$\mathfrak{p}_1,\ldots,\mathfrak{p}_s$, where $\mathfrak{p}_i$ is
generated by $t_1,\ldots,t_{i-1},t_{i+1},\ldots,t_s$. The
$\mathfrak{p}_i$-primary component of $L$ is uniquely determined and
is given by $LS_{\mathfrak{p}_i}\cap S$. Inverting the
variable $t_i$ in $LS_{\mathfrak{p}_i}$ it follows that 
$LS_{\mathfrak{p}_i}=\mathfrak{q}_iS_{\mathfrak{p}_i}$. As
$\mathfrak{q}_i$ is an irreducible ideal, it is
$\mathfrak{p}_i$-primary and one has the equality $LS_{\mathfrak{p}_i}\cap
S=\mathfrak{q}_i$. By the additivity of the degree we obtain the
required formula for the degree of $S/L$.
\end{proof}

\begin{proposition}{\cite[Propositions~3.1.33 and 5.1.11]{monalg-rev}}\label{inter-tensoraa}
Let $A=R_1/I_1$, $B=R_2/I_2$ be two standard graded algebras over a field
$K$, where $R_1=K[\mathbf{x}]$, $R_2=K[\mathbf{y}]$ are polynomial 
rings in disjoint sets of variables and $I_i$ is an ideal 
of $R_i$. If $R=K[\mathbf{x},\mathbf{y}]$ and $I=I_1R+I_2R$, then 
$$
(R_1/I_1)\otimes_K(R_2/I_2)\simeq R/I\
\mbox{ and }\ F(A\otimes_K B,x)=F(A,x)F(B,x),
$$
where $F(A,x)$ and $F(B,x)$ are the Hilbert series of $A$ and $B$,
respectively.
\end{proposition}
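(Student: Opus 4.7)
The plan is to dispatch the two assertions separately, first the algebra isomorphism and then the Hilbert series identity, since the latter follows almost formally from the former once we record how tensor products of graded vector spaces behave.

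For the isomorphism, I would first show that $R_1\otimes_K R_2\simeq R$ as graded $K$-algebras. The natural $K$-bilinear map $R_1\times R_2\to R$ sending $(f,g)\mapsto fg$ induces a $K$-algebra homomorphism $\varphi\colon R_1\otimes_K R_2\to R$ by the universal property. Because $\mathbf{x}$ and $\mathbf{y}$ are disjoint sets of variables, the set of products $x^\alpha y^\beta$ is a $K$-basis of $R$, while the set of elementary tensors $x^\alpha\otimes y^\beta$ is a $K$-basis of $R_1\otimes_K R_2$; hence $\varphi$ is a bijection, and it respects the grading given by $\deg(x^\alpha\otimes y^\beta)=|\alpha|+|\beta|$. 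Under $\varphi$, the kernel of the surjection $R_1\otimes_K R_2\to A\otimes_K B$, which is $I_1\otimes_K R_2+R_1\otimes_K I_2$, maps onto $I_1R+I_2R=I$. Therefore $\varphi$ induces the desired graded isomorphism $A\otimes_K B\simeq R/I$.

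For the Hilbert series, I would use the graded decomposition $(A\otimes_K B)_d=\bigoplus_{i+j=d}A_i\otimes_K B_j$, which holds because tensor product commutes with direct sums and $\dim_K(A_i\otimes_K B_j)=\dim_K(A_i)\dim_K(B_j)$. Taking dimensions gives
\[
\dim_K(A\otimes_K B)_d=\sum_{i+j=d}\dim_K(A_i)\dim_K(B_j),
\]
which is exactly the coefficient of $x^d$ in the Cauchy product $F(A,x)F(B,x)$. Summing over $d$ yields $F(A\otimes_K B,x)=F(A,x)F(B,x)$.

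The only step requiring care is the identification of the kernel of $R_1\otimes_K R_2\to A\otimes_K B$ with $I_1\otimes_K R_2+R_1\otimes_K I_2$; this uses flatness of polynomial rings (equivalently, freeness of $R_2$ over $K$), so that the sequence $0\to I_1\otimes_K R_2\to R_1\otimes_K R_2\to A\otimes_K R_2\to 0$ is exact, and similarly on the other side. I do not expect any real obstacle beyond bookkeeping: everything reduces to the elementary fact that monomials in disjoint variables form a basis and that $K$-vector spaces are flat.
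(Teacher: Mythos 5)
Your argument is correct. The paper does not prove this proposition at all—it quotes it from Propositions~3.1.33 and 5.1.11 of the cited monograph \cite{monalg-rev}—and your proof is the standard one (the graded isomorphism $R_1\otimes_K R_2\simeq R$, identification of the kernel $I_1\otimes_K R_2+R_1\otimes_K I_2$ with $I$, and the Cauchy-product decomposition $(A\otimes_K B)_d=\bigoplus_{i+j=d}A_i\otimes_K B_j$), which is essentially the argument given in that reference, so there is nothing to fix.
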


\begin{proposition}\label{bounds-for-deg-init-f} Let 
$t^a=t_r^{a_r}\cdots t_s^{a_s}$ be a standard monomial of $S/L$ with
respect to a monomial order $\prec$. If $a_r\geq 1$, $a_i=0$ for
$i<r$, and $1\leq r\leq s$, 
then $0\leq a_i\leq d_i-1$ for $i>r$ and    
%\begin{small}
$$
\deg S/(L,t^a)=\left\{\begin{array}{ll}
\displaystyle\deg
S/L-\sum_{i=2}^{s}(d_i-a_i)\cdots(d_s-a_s)-1&\mbox{ if }
r=s,\, a_s\leq d_s,\\
\displaystyle\deg
S/L-1&\mbox{ if }
r=s,\, a_s\geq d_s+1,\\
\displaystyle\deg
S/L-\sum_{i=2}^{r+1}(d_i-a_i)\cdots(d_s-a_s)&\mbox{ if }
r<s,\, a_r\leq d_r,\\
\displaystyle\deg
S/L-(d_{r+1}-a_{r+1})\cdots(d_s-a_s)&\mbox{ if }
r<s,\, a_r\geq d_r+1.
\end{array}
\right.
$$
%\end{small}
\end{proposition}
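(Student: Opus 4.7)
The first assertion is immediate: since $a_r \geq 1$, standardness $t^a \notin L$ rules out $t_r t_i^{d_i} \mid t^a$ for each $i > r$, forcing $a_i \leq d_i - 1$ for $i > r$. For the degree formula, I would first establish that $\dim S/(L,t^a) = \dim S/L = 1$. By Lemma~\ref{yuriko-vila} the associated primes of $L$ are $\mathfrak{p}_i = (t_1,\ldots,\widehat{t_i},\ldots,t_s)$; since $t_r \mid t^a$ and $s \geq 2$, we have $t^a \in \mathfrak{p}_i$ for every $i \neq r$, so $t^a$ is a zero-divisor on $S/L$, and the dimension argument of Lemma~\ref{degree-initial-footprint} applies. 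Consequently $\deg S/(L,t^a) = H_{(L,t^a)}(d)$ for $d \gg 0$, and since $(L,t^a)$ is a monomial ideal its standard monomials are precisely those standard monomials of $S/L$ that are not divisible by $t^a$. Writing
\begin{equation*}
N_a := \lim_{d\to\infty}\,\bigl|\{t^b \in \Delta_\prec(L)_d : t^a \mid t^b\}\bigr|,
\end{equation*}
the problem reduces to showing that $N_a$ equals the quantity subtracted in each of the four cases.

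A monomial $t^b$ lies in $\Delta_\prec(L)$ iff, setting $r' := \min\{i : b_i \geq 1\}$, one has $b_j \leq d_j - 1$ for all $j > r'$. Divisibility $t^a \mid t^b$ forces $b_r \geq a_r \geq 1$, hence $r' \leq r$. I would split the count into the subcases $r' = r$ and $r' = k < r$. When $r' = r$, the constraints are $b_r \geq a_r$ and $a_j \leq b_j \leq d_j - 1$ for $j > r$; for $d \gg 0$ we may freely choose $b_j \in \{a_j,\ldots,d_j-1\}$ for $r < j \leq s$ and set $b_r = d - \sum_{j>r} b_j$, which is automatically $\geq a_r$ for large $d$. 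This contributes $\prod_{j>r}(d_j - a_j)$ monomials (the empty product being $1$ when $r = s$). When $r' = k < r$ the extra constraint $a_r \leq b_r \leq d_r - 1$ requires $a_r \leq d_r - 1$; if it fails the subcase is empty, and otherwise the count per $k$ equals $\bigl(\prod_{k<j<r} d_j\bigr)(d_r - a_r)\prod_{j>r}(d_j - a_j)$.

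Summing over $k = 1,\ldots,r-1$ and using the reindexing identity
\begin{equation*}
\sum_{k=1}^{r-1}\prod_{j=k+1}^{r-1} d_j \;=\; \sum_{i=2}^{r}\prod_{j=i}^{r-1} d_j,
\end{equation*}
together with $a_j = 0$ for $j < r$, the $r' < r$ contribution becomes $\sum_{i=2}^{r}(d_i - a_i)(d_{i+1} - a_{i+1})\cdots(d_s - a_s)$; the boundary case $a_r = d_r$ is absorbed because the factor $(d_r - a_r)$ then vanishes, leaving only the $r' = r$ contribution. Adding the two subcases yields $N_a = \sum_{i=2}^{r+1}(d_i - a_i)\cdots(d_s - a_s)$ when $r < s$ and $a_r \leq d_r$; $N_a = (d_{r+1} - a_{r+1})\cdots(d_s - a_s)$ when $r < s$ and $a_r \geq d_r + 1$; $N_a = 1 + \sum_{i=2}^{s}(d_i - a_i)\cdots(d_s - a_s)$ when $r = s$ and $a_s \leq d_s$; and $N_a = 1$ when $r = s$ and $a_s \geq d_s + 1$. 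These match the four cases of the statement. The main obstacle is purely combinatorial bookkeeping: correctly isolating when the $r' < r$ subcase is non-empty, and verifying that the sum telescopes via $d_i \cdots d_{r-1}(d_r - a_r) = (d_i - a_i)\cdots(d_r - a_r)$ into the required partial sum of products.
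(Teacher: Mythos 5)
Your proof is correct, but it takes a genuinely different route from the paper. The paper proves the formula by induction on $s$, building short exact sequences such as
$0\to S/(t_1,\ldots,t_{s-1})[-d_s]\to S/(L,t^a)\to S/(L',t^a,t_s^{d_s})\to 0$
and manipulating Hilbert series (including the tensor-product formula of Proposition~\ref{inter-tensoraa} and the complete-intersection series), reading off the degree as $h(1)$ when $F(S/(L,t^a),x)=h(x)/(1-x)$. You instead exploit that $(L,t^a)$ is a monomial ideal: after checking via the associated primes $\mathfrak{p}_i$ of Lemma~\ref{yuriko-vila} that $t^a$ is a zero-divisor, so that $\dim S/(L,t^a)=\dim S/L=1$ and the degree is the stabilized value of the Hilbert function, you reduce everything to counting, for $d\gg 0$, the degree-$d$ standard monomials of $L$ that $t^a$ divides, stratified by the smallest index $r'$ with $b_{r'}\geq 1$; the characterization of $\Delta_\prec(L)$, the per-stratum counts, the emptiness of the strata $r'<r$ when $a_r\geq d_r+1$, the absorption of the boundary case $a_r=d_r$ by the vanishing factor $(d_r-a_r)$, and the final reindexing all check out, and the four resulting values of $N_a$ match the four cases of the statement (including the extra $-1$ and the $i=r+1$ term coming from the $r'=r$ stratum). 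What your argument buys is a self-contained, induction-free and more transparent proof that makes the combinatorial meaning of each subtracted product visible; what the paper's argument buys is uniformity with the homological/Hilbert-series technique it uses throughout (and which also drives Lemma~\ref{degree-initial-footprint} and Theorem~\ref{wolmer-obs}), at the price of heavier bookkeeping. The only point to make explicit if you write this up is the (easy) observation that the stratum counts are literally constant for all large $d$, so the limit defining $N_a$ exists and equals $H_L(d)-H_{(L,t^a)}(d)$ for $d\gg 0$.
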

\begin{proof} As $f=t^a$ is not a multiple of $t_it_j^{d_j}$ for
$i<j$, we get that $0\leq a_i\leq d_i-1$ for $i>r$. 
To show the formula for the degree we proceed by induction 
on $s\geq 2$. In what follows we will use the theory of Hilbert
functions of graded algebras, as introduced in a classical paper of
Stanley \cite{Sta1}. In particular, we will freely use 
the additivity of Hilbert
series, a formula for
the Hilbert series of a complete intersection
\cite[Corollary~3.3]{Sta1}, the formula of
Lemma~\ref{yuriko-vila} for the degree of $S/L$, and the fact that any
monomial is a zero-divisor of $S/L$ (this follows from 
Lemma~\ref{yuriko-vila}).  
We split the proof of the case $s=2$ in
three easy cases. 

Case (1): Assume $s=2$, $r=1$. This case is independent of whether
$a_1\leq d_1$ or $a_1\geq d_1+1$ because the two possible values of
$\deg S/(L,f)$ coincide. There are exact sequences 
\begin{eqnarray*}
& & 0\longrightarrow
S/(t_1)[-d_2]
\stackrel{t_2^{d_2}}{\longrightarrow} S/(L,f)\longrightarrow
S/(t_2^{d_2},f)\longrightarrow 0,\\ 
& &\ \ \ \  0\longrightarrow S/(t_2^{a_2})[-a_1]
\stackrel{t_1^{a_1}}{\longrightarrow} S/(t_2^{d_2},f)\longrightarrow
S/(t_2^{d_2},t_1^{a_1})\longrightarrow 0.
\end{eqnarray*}
Taking Hilbert series we get
$$
F(S/(L,f),x)=\frac{x^{d_2}}{1-x}+\frac{x^{a_1}(1+x+\cdots+x^{a_2-1})}{1-x}+
\left(\sum_{i=0}^{d_2-1}x^i\right)\left(
\sum_{i=0}^{a_1-1}x^i\right).
$$
Writing $F(S/(L,f),x)=h(x)/(1-x)$ with $h(x)\in\mathbb{Z}[x]$ and
$h(1)>0$, and noticing that $h(1)$ is the degree of $S/(L,f)$, we
get 
$$
\deg S/(L,f)=1+a_2=(d_2+1)-(d_2-a_2)=\deg(S/L)-(d_2-a_2).
$$

Case (2): Assume $s=2$, $r=2$, $a_2\leq d_2$. In this case $(L,f)$ is equal to
$(t_2^{a_2})$. Thus
$$ 
\deg S/(L,f)=a_2=(1+d_2)-(d_2-a_2)-1=\deg S/L-(d_2-a_2)-1.
$$

Case (3): Assume $s=2$, $r=2$, $a_2\geq d_2+1$. Taking Hilbert series in
the exact sequence
$$
0\longrightarrow
S/(t_1,t_2^{a_2-d_2})[-d_2]
\stackrel{t_2^{d_2}}{\longrightarrow} S/(L,f)\longrightarrow
S/(t_2^{d_2})\longrightarrow 0, 
$$
we obtain
$$
F(S/(L,f),x)=x^{d_2}(1+x+\cdots+x^{a_2-d_2-1})+\frac{(1+x+\cdots+x^{d_2-1})}{1-x}.
$$
Thus we may proceed as in Case (1) to get
$\deg(S/(L,f))=d_2=\deg(S/L)-1$.

This completes the initial induction step. We may now assume that
$s\geq 3$ and split the proof in three cases. 

Case (I): Assume $r=s\geq 3$ and $a_s\leq d_s$. Thus $f=t_s^{a_s}$ and $a_i=0$ for $i<s$. 
Setting $L'$ equal to the ideal generated by
the set of all $t_it_j^{d_j}$ such that $2\leq i<j\leq s$, there is an 
exact sequence 
$$
0\longrightarrow
S/(t_2^{d_2},\ldots,t_{s-1}^{d_{s-1}},t_s^{a_s})[-1]
\stackrel{t_1}{\longrightarrow} S/(L,t_s^{a_s})\longrightarrow
S/(L',t_s^{a_s},t_1)\longrightarrow 0.
$$
Taking Hilbert series one has 
$$
F(S/(L,t_s^{a_s}),x)=xF(S/(t_2^{d_2},\ldots,t_{s-1}^{d_{s-1}},t_s^{a_s}),x)+F(S/(L',t_s^{a_s},t_1),x).
$$
Hence, setting $S'=K[t_2,\ldots,t_s]$, from the induction hypothesis applied to 
$S'/(L',t_s^{a_s})$, and using that $\deg S'/L'=
\deg S/L-d_2\cdots d_{s-1}d_s$ (see Lemma~\ref{yuriko-vila}), we obtain
\begin{eqnarray*}
\deg(S/(L,f))&=&d_2\cdots
d_{s-1}a_s+\deg(S'/L')-\sum_{i=3}^{s}d_i\cdots d_{s-1}(d_s-a_s)-1\\
&=&\deg(S/L)-\sum_{i=2}^{s}d_i\cdots d_{s-1}(d_s-a_s)-1.
\end{eqnarray*}

Case (II): Assume $r=s\geq 3$ and $a_s\geq d_s+1$. Using the exact sequence 
$$
0\longrightarrow
S/(t_2^{d_2},\ldots,t_{s-1}^{d_{s-1}},t_s^{d_s})[-1]
\stackrel{t_1}{\longrightarrow} S/(L,t_s^{a_s})\longrightarrow
S/(L',t_s^{a_s},t_1)\longrightarrow 0,
$$
we can proceed as in Case (I) to get $\deg(S/(L,f))=\deg(S/L)-1$.

Case (III): Assume $r<s$. Then, by assumption, $a_s<d_s$. Let
$L'$ be the
ideal generated by the set of all $t_it_j^{d_j}$ such that $1\leq
i<j\leq s-1$. Setting $f'=t_r^{a_r}\cdots
t_{s-1}^{a_{s-1}}$ and $S'=K[t_1,\ldots,t_{s-1}]$, there are exact sequences 
\begin{eqnarray*}
& & 0\longrightarrow
S/(t_1,\ldots,t_{s-1})[-d_s]
\stackrel{t_s^{d_s}}{\longrightarrow} S/(L,f)\longrightarrow
S/(L',f,t_s^{d_s})\longrightarrow 0,\\ 
& &\ \ \ \  0\longrightarrow S/(L',f',t_s^{d_s-a_s})[-a_s]
\stackrel{t_s^{a_s}}{\longrightarrow} S/(L',f,t_s^{d_s})\longrightarrow
S/(L',t_s^{a_s})\longrightarrow 0.
\end{eqnarray*}
Hence taking Hilbert series, and applying
Proposition~\ref{inter-tensoraa}, we get
$$ 
F(S/(L,f),x)=
\frac{x^{d_s}}{1-x}+F(S'/(L',f'),x)F(K[t_s]/(t_s^{d_s-a_s}),x)+F(S'/L',x)F(K[t_s]/(t_s^{a_s}),x).
$$
Writing $F(S/(L,f),x)=h(x)/(1-x)$ with $h(x)\in\mathbb{Z}[x]$ and
$h(1)>0$, and noticing that $h(1)$ is the degree of $S/(L,f)$, the
induction hypothesis applied to $S'/(L',f')$ yields the equality 
\begin{eqnarray*}
\deg S/(L,f)&=&1+\left(\deg 
S'/L'-\sum_{i=2}^{r+1}(d_i-a_i)\cdots(d_{s-1}-a_{s-1})\right)(d_s-a_s)+(\deg
S'/L')a_s\\
&=&1+(\deg S'/L')d_s-\sum_{i=2}^{r+1}(d_i-a_i)\cdots(d_{s}-a_{s})\ 
\mbox{ if }\ a_r\leq d_r.
\end{eqnarray*}
or the equality 
\begin{eqnarray*}
\deg S/(L,f)&=&1+\left(\deg 
S'/L'-(d_{r+1}-a_{r+1})\cdots(d_{s-1}-a_{s-1})\right)(d_s-a_s)+
\deg(S'/L')a_s\\
&=&1+\deg(
S'/L')d_s-(d_{r+1}-a_{r+1})\cdots(d_{s}-a_{s})\ 
\mbox{ if }\ a_r\geq d_r+1.
\end{eqnarray*}

To complete the proof it suffices to notice that $\deg(S/L)=1+\deg
(S'/L')d_s$. This equality follows readily from
Lemma~\ref{yuriko-vila}.
\end{proof}

\begin{remark} Cases (1), (2), and (3) can
also be shown using Hilbert functions instead of Hilbert series, but 
case (III) is easier to handle using Hilbert series. 
\end{remark}

\begin{lemma}\label{basic-ineq}
Let $a_1,\ldots,a_r,a,b,e$ be positive integers with $e\geq a$. Then
\begin{itemize}
\item[(a)] $a_1\cdots a_r\geq(a_1+\cdots+a_r)-(r-1)$, and
\item[(b)] $a(e-b)\geq (a-b)e$.
\end{itemize}
\end{lemma}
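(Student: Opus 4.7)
For part (b) the plan is direct: expand both sides to get $ae-ab \geq ae-be$, cancel $ae$, and reduce the claim to $b(e-a) \geq 0$, which holds because $b \geq 1$ and $e \geq a$ by hypothesis. There is no real obstacle here.

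For part (a) I would argue by induction on $r$. The base case $r=1$ is the trivial identity $a_1 \geq a_1$. For the inductive step, set $T = a_1+\cdots+a_r$ and $b = a_{r+1}$. By the inductive hypothesis, $a_1\cdots a_r \geq T-(r-1)$, so multiplying by the positive integer $b$ gives
\[
a_1\cdots a_r\cdot b \;\geq\; \bigl(T-(r-1)\bigr)\,b.
\]
The goal is to compare this with $T + b - r$. Subtracting, the inequality
\[
\bigl(T-(r-1)\bigr)b \;\geq\; T+b-r
\]
simplifies after moving $b$ to the left and $T-r$ to the right to $(b-1)(T-r) \geq 0$. This holds because $b\geq 1$ and, since each $a_i$ is a positive integer, $T = a_1+\cdots+a_r \geq r$.

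The only mildly delicate point is keeping track of the algebra in the inductive step of (a); everything else is a one-line manipulation. I would present (a) first by induction and then dispose of (b) in a single displayed computation.
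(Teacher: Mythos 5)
Your proposal is correct and follows the same route the paper indicates: part (a) by induction on $r$ and part (b) by a direct one-line expansion, with your inductive step reducing cleanly to $(b-1)(T-r)\geq 0$, which is exactly the detail the paper leaves implicit. No issues.
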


\begin{proof} Part (a) follows by
induction on $r$, and part
(b) is straightforward. 
\end{proof}

The next inequality is a generalization of part (a).

\begin{lemma}\label{pepe-vila} Let $1\leq e_1\leq\cdots\leq e_m$ and $0\leq b_i\leq e_i-1$
for $i=1,\ldots,m$ be integers. Then 
\begin{equation}\label{aug-18-15}
\prod_{i=1}^m(e_i-b_i)\geq\left(\sum_{i=1}^k(e_i-b_i)-(k-1)-\sum_{i=k+1}^mb_i\right)e_{k+1}\cdots e_m
\end{equation}
for $k=1,\ldots,m$, where $e_{k+1}\cdots e_m=1$ and
$\sum_{i=k+1}^mb_i=0$ if $k=m$.
\end{lemma}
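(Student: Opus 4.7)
The plan is to prove the inequality by induction on $m$. The base case $m=1$ (hence $k=1$) reduces to $c_1 \ge c_1$, where $c_i := e_i - b_i$, which is trivial. In the inductive step, the case $k = m$ is precisely Lemma~\ref{basic-ineq}(a) applied to the positive integers $c_1, \ldots, c_m$. For $k < m$, my first approach is to apply the inductive hypothesis to the truncated data $(e_1,\ldots,e_{m-1})$, $(b_1,\ldots,b_{m-1})$ with the same $k$, obtaining
\[
\prod_{i=1}^{m-1} c_i \ge A\, e_{k+1}\cdots e_{m-1}, \qquad A := \sum_{i=1}^k c_i - (k-1) - \sum_{k<i<m} b_i.
\]
Multiplying by $c_m = e_m - b_m$ and comparing with the target $(A-b_m)e_m e_{k+1}\cdots e_{m-1}$, what remains is $A(e_m - b_m) \ge (A - b_m) e_m$, which is precisely Lemma~\ref{basic-ineq}(b) with $a=A$, $e=e_m$, $b=b_m$, valid whenever $e_m \ge A$. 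Setting $S_k := \sum_{i=1}^k c_i - (k-1)$, one has $A \le S_k$, so this easy reduction succeeds whenever $S_k \le e_{k+1}$; in particular it always works when $k = 1$.

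The main obstacle is the complementary regime $S_k > e_{k+1}$, which can occur when $k \ge 2$ and the first entries $c_1,\ldots,c_k$ are large. Here $A$ may exceed $e_m$ and the direct peel-off above fails. To close this case, I would write the gap explicitly using the telescoping identity
\[
\prod_{i>k} e_i - \prod_{i>k} c_i = \sum_{j>k} c_{k+1}\cdots c_{j-1}\, b_j\, e_{j+1}\cdots e_m,
\]
which, combined with Lemma~\ref{basic-ineq}(a) applied to $c_1,\ldots,c_k$ (giving $C_k := \prod_{i\le k} c_i \ge S_k$), yields after rearrangement
\[
\prod_{i=1}^m c_i - (S_k - B) E = (C_k - S_k)\prod_{i>k} c_i + \sum_{j>k} b_j\, e_{j+1}\cdots e_m\bigl(e_{k+1}\cdots e_j - S_k\, c_{k+1}\cdots c_{j-1}\bigr),
\]
where $B := \sum_{i>k} b_i$ and $E := \prod_{i>k} e_i$. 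The first summand is non-negative by Lemma~\ref{basic-ineq}(a). Using $c_i \le e_i$, the bracketed factor in the $j$-th term of the sum is bounded below by $e_{k+1}\cdots e_{j-1}(e_j - S_k)$, hence is non-negative as soon as $e_j \ge S_k$; this holds for all but a finite initial segment of indices $j$ once $S_k > e_{k+1}$.

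The crux of the argument, and the hardest step to execute, is showing that the positive contribution $(C_k - S_k)\prod_{i>k} c_i$ quantitatively absorbs the remaining negative terms from those $j$ with $e_j < S_k$. The constraints $c_i \le e_i \le e_{k+1}$ together with $\sum_i c_i = S_k + (k-1)$ force the extremal lower bound $C_k \ge e_{k+1}(S_k - e_{k+1} + 1)$ in this regime, so $C_k - S_k \ge (e_{k+1}-1)(S_k - e_{k+1})$ grows with the excess $S_k - e_{k+1}$. A careful term-by-term comparison, bounding each offending bracket by $e_{k+1}\cdots e_{j-1}(S_k - e_j)$ and summing against the telescoping expression for $\prod_{i>k} c_i$, should then close the inequality. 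A secondary induction on the number of bad indices $\{j>k : e_j < S_k\}$ may streamline this compensation.
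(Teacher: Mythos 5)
Your reductions at the start are fine: the base case, the case $k=m$ via Lemma~\ref{basic-ineq}(a), and the peel-off of the last factor, which indeed works exactly when $A\leq e_m$ (in particular whenever $S_k\leq e_{k+1}$, so always for $k=1$); your rearrangement identity for $\prod_{i=1}^m c_i-(S_k-B)E$ is also correct. But the complementary regime is precisely where the lemma has content, and there you stop at a plan: "should then close the inequality", "may streamline this compensation". Nothing is actually proved in the hard case, so the argument has a genuine gap, and it is the main one.

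Moreover, the specific compensation you sketch cannot work as stated. Bounding the offending bracket $e_{k+1}\cdots e_j-S_k\,c_{k+1}\cdots c_{j-1}$ below by $e_{k+1}\cdots e_{j-1}(e_j-S_k)$ (i.e., replacing $c_i$ by $e_i$) is too lossy: take $m=4$, $k=2$, $e_1=\cdots=e_4=M$, $b_1=b_2=0$, $b_3=b_4=M-1$, so $c_1=c_2=M$, $c_3=c_4=1$, $S_2=2M-1$. Both sides of Eq.~(\ref{aug-18-15}) equal $M^2$, and in your identity the three contributions are $(M-1)^2$, $-M(M-1)^2$ (from $j=3$) and $+(M-1)^3$ (from $j=4$), summing to $0$; note the $j=4$ bracket is actually \emph{positive}, equal to $(M-1)^2$. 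Replacing it by $e_3(e_4-S_2)=-M(M-1)$ gives a total of $(M-1)^2(1-2M)<0$, so the term-by-term comparison you describe fails already in this equality case: the brackets must be kept with the exact factors $c_{k+1}\cdots c_{j-1}$, and the absorption by $(C_k-S_k)\prod_{i>k}c_i$ requires a different, global accounting that you have not supplied. For contrast, the paper sidesteps the regime split entirely: it fixes $m$ and $k$ and inducts on $\sigma=\sum_{i=1}^k(e_i-b_i-1)$, transferring mass from some $b_j$ with $j>k$ (reduced to $j=m$) onto a coordinate $\ell\leq k$ with $e_\ell-b_\ell\geq 2$, and compares $(e_\ell-b_\ell)(e_m-b_m)$ with the shifted product using Lemma~\ref{basic-ineq}(a) and (b); either adopt that transfer idea or rework your compensation with the exact brackets before this can count as a proof.
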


\demo 
Fix $m$ and $1\leq k\leq m$. We will proceed by
induction on $\sigma=\sum_{i=1}^k(e_i-b_i-1)$. If $\sigma=0$, then
$e_i-b_i-1=0$ for $i=1,\ldots,k$. Thus either $1-\sum_{i=k+1}^m{b_i}<0$
or $1-\sum_{i=k+1}^m{b_i}\geq 1$. In the first case the inequality is
clear because the left hand side of Eq.~(\ref{aug-18-15}) is positive and
in the second case one has $b_i=0$ for $i=k+1,\ldots,m$ and equality
holds in Eq.~(\ref{aug-18-15}). Assume that $\sigma>0$.  If $k=m$ or $b_i=0$ for
$i=k+1,\ldots,m$, the inequality follows at once from
Lemma~\ref{basic-ineq}(a). 
Thus, we may assume $k<m$ and
$b_j>0$ for some $k+1\leq j\leq m$. To simplify notation, and 
without loss of generality, we may assume that $j=m$, that is, 
$b_m>0$. 
 If the right hand side of Eq.~(\ref{aug-18-15}) is
negative or zero, the inequality holds. Thus we may also assume that
\begin{equation}\label{aug-18-15-1}
\sum_{i=1}^k(e_i-b_i)-\sum_{i=k+1}^mb_i\geq k.
\end{equation}
Hence there is $1\leq \ell\leq k$ such that $e_\ell-b_\ell\geq 2$.

Case (1): Assume $e_\ell-b_\ell-b_m\geq 1$. Setting
$a=e_\ell-b_\ell$, $e=e_m$, and $b=b_m$ in Lemma~\ref{basic-ineq}(b), we
get 
\begin{equation}\label{aug-18-15-2}
(e_\ell-b_\ell)(e_m-b_m)\geq (e_\ell-(b_\ell+b_m))e_m.
\end{equation}
Therefore using Eq.~(\ref{aug-18-15-2}), and then applying the
induction hypothesis to the two sequences of integers
$$
e_1,\ldots,e_{\ell-1},e_\ell,e_{\ell+1},\ldots,e_{m-1},e_m;\ \ 
b_1,\ldots,b_{\ell-1},b_\ell+b_m,b_{\ell+1},\ldots,b_{m-1},0,
$$
we get the inequalities
\begin{eqnarray*}
\prod_{i=1}^m(e_i-b_i)&=&\left(\prod_{i\notin\{\ell,m\}}(e_i-b_i)\right)(e_\ell-b_\ell)(e_m-b_m)\\
&\geq&\left(\prod_{i\notin\{\ell,m\}}(e_i-b_i)\right)
(e_\ell-(b_\ell+b_m))e_m
\\ &\geq&\left(\sum_{\ell\neq
i=1}^k(e_i-b_i)+(e_\ell-(b_\ell+b_m))-(k-1)-\sum_{i=k+1}^{m-1}b_i\right)e_{k+1}\cdots
e_m\\  
&=&\left(\sum_{i=1}^k(e_i-b_i)-(k-1)-\sum_{i=k+1}^mb_i\right)e_{k+1}\cdots
e_m.
\end{eqnarray*}

Case (2): Assume $e_\ell-b_\ell-b_m<1$. Setting
$r_\ell=e_\ell-b_\ell-1\geq 1$, one has 
$$
b_\ell+r_\ell=e_\ell-1\geq 1,\,\,\ b_m-r_\ell\geq 1,\,\,\ 
e_\ell-(b_\ell+r_\ell)=1.
$$
On the other hand, by Lemma~\ref{basic-ineq}(a), one has 
\begin{equation}\label{aug-18-15-3}
(e_\ell-b_\ell)(e_m-b_m)\geq
(e_\ell-b_\ell)+(e_m-b_m)-1=(e_\ell-(b_\ell+r_\ell))(e_m-(b_m-r_\ell)).
\end{equation}
Therefore using Eq.~(\ref{aug-18-15-3}), and then applying the
induction hypothesis to the two sequences of integers
$$
e_1,\ldots,e_{\ell-1},e_\ell,e_{\ell+1},\ldots,e_{m-1},e_m;\ \ 
b_1,\ldots,b_{\ell-1},b_\ell+r_\ell,b_{\ell+1},\ldots,b_{m-1},b_m-r_\ell,
$$
we get the inequalities
\begin{eqnarray*}
\prod_{i=1}^m(e_i-b_i)&=&\left(\prod_{i\notin\{\ell,m\}}(e_i-b_i)\right)(e_\ell-b_\ell)(e_m-b_m)
\\
&\geq&\left(\prod_{i\neq\{\ell,m\}}(e_i-b_i)\right)
(e_\ell-(b_\ell+r_\ell))(e_m-(b_m-r_\ell))
\\ &\geq&\left(\sum_{\ell\neq
i=1}^k(e_i-b_i)+(e_\ell-(b_\ell+r_\ell))-(k-1)-\sum_{i=k+1}^{m-1}b_i-(b_m-r_\ell)\right)e_{k+1}\cdots
e_m\\  
&=&\left(\sum_{i=1}^k(e_i-b_i)-(k-1)-\sum_{i=k+1}^mb_i\right)e_{k+1}
\cdots e_m.\ \Box
\end{eqnarray*}

\begin{proposition}\label{aug-28-15}
Let $1\leq e_1\leq\cdots\leq e_m$ and $0\leq b_i\leq e_i-1$
for $i=1,\ldots,m$ be integers. If $b_0\geq 1$, then 
\begin{equation}\label{aug-27-15-2}
\prod_{i=1}^m(e_i-b_i)\geq
\left(\sum_{i=1}^{k+1}(e_i-b_i)-(k-1)-b_0-\sum_{i=k+2}^m b_i\right)e_{k+2}\cdots e_m
\end{equation}
for $k=0,\ldots,m-1$, where $e_{k+2}\cdots e_m=1$ and
$\sum_{i=k+2}^mb_i=0$ if $k=m-1$.
\end{proposition}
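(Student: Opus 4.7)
The statement is essentially a weakening of Lemma~\ref{pepe-vila} (pepe-vila), so my plan is to deduce it as an immediate corollary of that lemma. The idea is that, after re-indexing, the right-hand side of Eq.~(\ref{aug-27-15-2}) differs from the right-hand side of Eq.~(\ref{aug-18-15}) only in the constant term, and the hypothesis $b_0\geq 1$ is exactly what is needed to make the new bound weaker (hence valid).

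Concretely, first I would apply Lemma~\ref{pepe-vila} with the parameter $k$ replaced by $k+1$. Since the range $k=0,\ldots,m-1$ in the current proposition corresponds to $k+1=1,\ldots,m$ in Lemma~\ref{pepe-vila}, and the empty-product/empty-sum conventions in the two statements are consistent (when $k=m-1$ one has $e_{k+2}\cdots e_m=1$ and $\sum_{i=k+2}^{m}b_i=0$, matching the $k'=m$ boundary case of Lemma~\ref{pepe-vila}), this substitution is legitimate and yields
\[
\prod_{i=1}^m(e_i-b_i)\;\geq\;\Big(\sum_{i=1}^{k+1}(e_i-b_i)-k-\sum_{i=k+2}^{m}b_i\Big)\,e_{k+2}\cdots e_m.
\]

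Next I would compare the coefficient of $e_{k+2}\cdots e_m$ in this bound with the coefficient appearing in Eq.~(\ref{aug-27-15-2}). Their difference equals
\[
\bigl(-k\bigr)-\bigl(-(k-1)-b_0\bigr)=b_0-1,
\]
which is $\geq 0$ by the hypothesis $b_0\geq 1$. Since $e_{k+2}\cdots e_m\geq 1>0$ (all $e_i\geq 1$), multiplying through preserves the inequality, so the right-hand side of the lemma bound dominates the right-hand side of Eq.~(\ref{aug-27-15-2}), and chaining the two inequalities gives the claim.

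There is no real obstacle here: the whole content is bookkeeping of indices, the trivial arithmetic identity $-k=-(k-1)-1\geq -(k-1)-b_0$, and the positivity of $e_{k+2}\cdots e_m$. The only point that requires a moment of care is confirming that the boundary case $k=m-1$ (where several products/sums are empty) still matches the corresponding boundary case $k'=m$ of Lemma~\ref{pepe-vila}; this is immediate from the conventions stated in both results.
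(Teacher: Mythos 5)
Your proposal is correct and is essentially the paper's own argument: the paper likewise applies Lemma~\ref{pepe-vila} with $k$ replaced by $k+1$ and then weakens the resulting coefficient using $b_0\geq 1$. Nothing further is needed.
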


\begin{proof} If $0\leq k\leq m-1$, then $1\leq k+1\leq m$. Applying
Lemma~\ref{pepe-vila}, and making the substitution $k\rightarrow k+1$ in
Eq.~(\ref{aug-27-15-2}), we get
\begin{eqnarray*}
\prod_{i=1}^m(e_i-b_i)&\geq&\left(\sum_{i=1}^{k+1}(e_i-b_i)-
k-\sum_{i=k+2}^mb_i\right)e_{k+2}\cdots e_m
\\
&\geq&\left(\sum_{i=1}^{k+1}(e_i-b_i)-
(k-1)-b_0-\sum_{i=k+2}^mb_i\right)e_{k+2}\cdots e_m,
\end{eqnarray*}
where the second inequality holds because $b_0\geq 1$.
\end{proof}

\section{Projective nested cartesian codes}\label{pncc-section}

In this section we introduce projective nested cartesian codes, a type of evaluation codes that 
generalize the classical projective Reed--Muller
codes \cite{sorensen}. As an application we will give some support to
a conjecture of Carvalho, Lopez-Neumann and L\'opez.

Let $K=\mathbb{F}_q$ be a finite field, let $A_1,\ldots,A_s$ be a collection of subsets 
of $K$, and let 
$$
\mathcal{X}=[A_1\times\cdots\times A_s]
$$
be the image of $A_1\times\cdots\times A_s\setminus\{0\}$ under the map 
$K^s\setminus\{0\}\rightarrow\mathbb{P}^{s-1}$, $x\rightarrow [x]$. 

\begin{definition}{\cite{carvalho-lopez-lopez}}\label{pncc}
The set $\mathcal{X}$ is called a
{\it projective nested cartesian set} if  
\begin{itemize}
\item[(i)] $\{0,1\}\subset A_i$ for $i=1,\ldots,s$, 
\item[(ii)] $a/b\in A_j$ for  $1\leq i<j\leq s$, 
$a\in A_j$, $0\neq b\in A_i$, and 
\item[(iii)] $d_1\leq\cdots\leq d_s$, where $d_i=|A_i|$ for
$i=1,\ldots,s$.  
\end{itemize}
If $\mathcal{X}$ is a projective nested cartesian set, we call 
$C_{\mathcal{X}}(d)$ a {\it projective nested cartesian
code}. 
\end{definition}

\begin{conjecture}{\rm(Carvalho, Lopez-Neumann, and L\'opez 
\cite{carvalho-lopez-lopez})}\label{carvalho-lopez-lopez-conjecture}
Let $C_\mathcal{X}(d)$ be the $d$-th projective nested
cartesian code on the set
$\mathcal{X}=[A_1\times\cdots\times A_s]$ with $d_i=|A_i|$ for
$i=1,\ldots,s$. Then its minimum distance is given by 
$$
\delta_\mathcal{X}(d)=\left\{\hspace{-1mm}
\begin{array}{ll}\left(d_{k+2}-\ell+1\right)d_{k+3}\cdots d_s&\mbox{ if }
d\leq \sum\limits_{i=2}^{s}\left(d_i-1\right),\\
\qquad \qquad 1&\mbox{ if } d\geq
\sum\limits_{i=2}^{s}\left(d_i-1\right)+1,
\end{array}
\right.
$$
where $0\leq k\leq s-2$ and $\ell$ are the unique integers such that 
$d=\sum_{i=2}^{k+1}\left(d_i-1\right)+\ell$ and $1\leq \ell \leq
d_{k+2}-1$. 
\end{conjecture}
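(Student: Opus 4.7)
The plan is to exploit the algebraic formulation of the minimum distance provided by Theorem~\ref{min-dis-vi}, which reduces the problem to estimating $\delta_{I(\mathcal{X})}(d)$. The second range $d\geq\sum_{i=2}^{s}(d_i-1)+1$ can be handled separately: one expects ${\rm reg}(S/I(\mathcal{X}))=\sum_{i=2}^s(d_i-1)+1$ from the Gr\"obner-basis computation of Carvalho, Lopez-Neumann and L\'opez, so Proposition~\ref{jan4-15}(iii) together with Proposition~\ref{md-is-decreasing} gives $\delta_\mathcal{X}(d)=1$ in that range. The substantive case is therefore $1\leq d\leq\sum_{i=2}^s(d_i-1)$.

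For the lower bound in this range, I would invoke Corollary~\ref{footprint-lowerbound-vanishing}, which yields $\delta_\mathcal{X}(d)\geq{\rm fp}_{I(\mathcal{X})}(d)$ once one checks that every variable is a zero-divisor of $S/I(\mathcal{X})$; for the nested cartesian $\mathcal{X}$ this follows from the fact that a Gr\"obner basis of $I(\mathcal{X})$ for the chosen lex order has initial ideal exactly the ideal $L$ from Section~\ref{degree-formulas-section}, which is supported in every variable. Proposition~\ref{bounds-for-deg-init-f} then gives the explicit value of $\deg(S/(L,t^a))$ for every standard monomial $t^a=t_r^{a_r}\cdots t_s^{a_s}$ of degree $d$, and the task reduces to showing
$$\deg(S/L)-\deg(S/(L,t^a))\geq (d_{k+2}-\ell+1)d_{k+3}\cdots d_s$$
for every such $t^a$, with $k,\ell$ read off from $d$ as in the conjecture. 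Lemma~\ref{pepe-vila} and Proposition~\ref{aug-28-15} appear to be custom-built for exactly this estimate: after substituting $e_i=d_{r+i-1}$ and $b_i=a_{r+i-1}$, the two cases $a_r\leq d_r$ and $a_r\geq d_r+1$ of Proposition~\ref{bounds-for-deg-init-f} should match, respectively, the main inequality of Lemma~\ref{pepe-vila} and the shifted inequality of Proposition~\ref{aug-28-15} with $b_0=a_r-d_r$.

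For the matching upper bound I would construct an explicit homogeneous polynomial of degree $d$, a natural candidate being
$$f=\prod_{i=2}^{k+1}\prod_{\alpha\in A_i\setminus\{1\}}(t_i-\alpha t_1)\cdot\prod_{j=1}^{\ell}(t_{k+2}-\beta_j t_1)$$
for appropriate $\beta_1,\ldots,\beta_\ell\in A_{k+2}$. Conditions (i) and (ii) of Definition~\ref{pncc} ensure that these linear forms genuinely cut out the intended cosets in the affine chart $t_1\neq 0$, and a direct count using the cartesian structure (together with a careful analysis of the contribution from points with $t_1=0$) should give exactly $(d_{k+2}-\ell+1)d_{k+3}\cdots d_s$ nonvanishing points, producing an element of $C_\mathcal{X}(d)$ of the predicted weight.

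The hard part will be the lower bound when $t_1\nmid t^a$, equivalently $r\geq 2$. In this regime Proposition~\ref{bounds-for-deg-init-f} yields a strictly smaller value of $\deg(S/L)-\deg(S/(L,t^a))$ than in the case $r=1$, so the footprint bound alone is insufficient to recover the conjectured minimum distance. This is precisely where Theorem~\ref{yuriko-pepe-vila} of the paper stops. Closing the gap will likely require bounding $|V_\mathcal{X}(f)|$ more sharply than $\deg(S/({\rm in}_\prec(I(\mathcal{X})),{\rm in}_\prec(f)))$, perhaps via a slicing argument that restricts to the hyperplane $t_1=0$ and inducts on $s$, or via a change of coordinates exploiting the fact that the distinguished role of $t_1$ in the Gr\"obner basis is partly an artifact of the term order rather than an intrinsic feature of $\mathcal{X}$.
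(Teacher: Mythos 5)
The statement you are proving is labeled a \emph{conjecture} in the paper: the authors do not prove it, and your proposal does not prove it either, so there is a genuine gap --- indeed the same gap the paper leaves open. Your outline reproduces exactly the partial results the paper establishes: the range $d\geq\sum_{i=2}^{s}(d_i-1)+1$ follows from ${\rm reg}(S/I(\mathcal{X}))=1+\sum_{i=2}^{s}(d_i-1)$ (Proposition~\ref{carvalho-deg-reg}) together with Proposition~\ref{jan4-15}(iii); the lower bound for standard polynomials $f$ with $t_1\mid{\rm in}_\prec(f)$ via Proposition~\ref{bounds-for-deg-init-f}, Lemma~\ref{pepe-vila} and Proposition~\ref{aug-28-15} is Theorem~\ref{yuriko-pepe-vila}; and your explicit product of linear forms for the upper bound is the content of Corollary~\ref{yuriko-pepe-vila-coro}, which rests on \cite[Lemma~3.1]{carvalho-lopez-lopez}. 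What remains --- the lower bound on $\deg(S/I(\mathcal{X}))-|V_\mathcal{X}(f)|$ when ${\rm in}_\prec(f)=t_r^{a_r}\cdots t_s^{a_s}$ with $r\geq 2$ --- is precisely Conjecture~\ref{carvalho-lopez-lopez-conjecture-new} in its essential case, and your suggestions for closing it (slicing along $t_1=0$, change of coordinates) are not carried out, so they cannot be credited as a proof.

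Two technical points within the part you do sketch. First, the footprint bound genuinely cannot be pushed through for $r\geq 2$ by re-indexing Lemma~\ref{pepe-vila} or Proposition~\ref{aug-28-15} (your substitution $e_i=d_{r+i-1}$, $b_0=a_r-d_r$): the required inequality is simply false for some standard monomials. For instance, with $s=3$, $d_2=d_3=3$ and $d=3$ the monomial $t_3^{3}$ gives $\deg(S/L)-\deg(S/(L,t_3^3))=1$, while the conjectured minimum distance is $3$; so for $r\geq2$ one must bound $|V_\mathcal{X}(f)|$ by something strictly sharper than $\deg(S/({\rm in}_\prec(I(\mathcal{X})),{\rm in}_\prec(f)))$, which is a new idea not present in the paper or in your proposal. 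Second, the fact that each $t_i$ is a zero-divisor of $S/I(\mathcal{X})$ does not follow from the initial ideal involving every variable (Lemma~\ref{regular-init} only gives the converse direction); the correct reason is that $\{0,1\}\subset A_i$ forces the coordinate points $[e_i]$ to lie in $\mathcal{X}$, so every variable lies in some associated prime $I_{[e_j]}$, $j\neq i$.
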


In what follows $\mathcal{X}=[A_1\times\cdots\times A_s]$ denotes a
projective nested cartesian set and $C_\mathcal{X}(d)$ is its 
corresponding $d$-th projective Reed-Muller-type code. Throughout this
section $\prec$ is the lexicographical order on $S$ with 
$t_1\prec\cdots \prec t_s$ and ${\rm in}_\prec(I(\mathcal{X}))$ is the
initial ideal of $I(\mathcal{X})$.

\begin{proposition}{\cite{carvalho-lopez-lopez}}\label{carvalho-deg-reg}
The initial ideal ${\rm in}_\prec(I(\mathcal{X}))$ is 
generated by the set of all monomials $t_it_j^{d_j}$ such that
$1\leq i<j\leq s$, 
$$\deg(S/I(\mathcal{X}))=1+\sum_{i=2}^sd_i\cdots
d_s,\ \mbox{ and } \ {\rm reg}(S/I(\mathcal{X}))=
1+\sum_{i=2}^s(d_i-1).
$$
\end{proposition}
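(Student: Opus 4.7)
My plan has three parts. First I would identify generators of $\mathrm{in}_\prec(I(\mathcal{X}))$ by exhibiting, for each pair $1 \leq i < j \leq s$, the homogeneous polynomial
$$
f_{ij} := t_i \prod_{a \in A_j}(t_j - a t_i)
$$
of degree $d_j + 1$, and checking it lies in $I(\mathcal{X})$: at $[x] \in \mathcal{X}$ with $x_i = 0$ the prefactor $t_i$ kills it, while if $x_i \neq 0$ the nested property forces $x_j/x_i \in A_j$, so one of the factors $x_j - a x_i = x_i(x_j/x_i - a)$ vanishes. Under the lex order with $t_1 \prec \cdots \prec t_s$ every linear factor has leading term $t_j$, hence $\mathrm{in}_\prec(f_{ij}) = t_i t_j^{d_j}$ and $L \subseteq \mathrm{in}_\prec(I(\mathcal{X}))$, where $L$ is the monomial ideal of Section~\ref{degree-formulas-section}.

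Second I would compute $|\mathcal{X}|$ by the following normalization: using that $A_j$ contains $0,1$ and is closed under $a \mapsto a/b$ for $b \in A_i \setminus \{0\}$ with $i < j$, each point of $\mathcal{X}$ admits a unique representative $(0,\ldots,0,1,y_{i+1},\ldots,y_s)$ obtained by normalizing its first nonzero coordinate, with $y_k \in A_k$. Summing over $i$ gives $|\mathcal{X}| = \sum_{i=1}^{s} d_{i+1}\cdots d_s = 1 + \sum_{i=2}^{s} d_i \cdots d_s$, which equals $\deg(S/L)$ by Lemma~\ref{yuriko-vila}. By Proposition~\ref{jan4-15}(ii) together with Macaulay's theorem (Theorem~\ref{hilb=init}), $\deg(S/\mathrm{in}_\prec(I(\mathcal{X}))) = \deg(S/L)$. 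The hard part will be upgrading the containment $L \subseteq \mathrm{in}_\prec(I(\mathcal{X}))$ with equal degree to an actual equality of ideals. Here I would use that $L$ is Cohen-Macaulay of dimension one by Lemma~\ref{yuriko-vila}, so $\mathrm{depth}(S/L) = 1$ and $S/L$ has no nonzero finite-length submodule. Considering the short exact sequence
$$
0 \to \mathrm{in}_\prec(I(\mathcal{X}))/L \to S/L \to S/\mathrm{in}_\prec(I(\mathcal{X})) \to 0,
$$
both end terms have Krull dimension one and the constant Hilbert polynomial $|\mathcal{X}|$, so the subquotient $\mathrm{in}_\prec(I(\mathcal{X}))/L$ has zero Hilbert polynomial and hence finite length; the depth condition then forces it to vanish, giving $\mathrm{in}_\prec(I(\mathcal{X})) = L$.

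Third, for the regularity I would use Theorem~\ref{hilb=init} to replace $\mathrm{reg}(S/I(\mathcal{X}))$ by $\mathrm{reg}(S/L)$, and then compute directly: the standard monomials of $L$ in positive degree are exactly the $t_i^{a_i} t_{i+1}^{a_{i+1}} \cdots t_s^{a_s}$ with $i$ the smallest index satisfying $a_i \geq 1$ and with $0 \leq a_j \leq d_j - 1$ for $j > i$. Stratifying by $i$ and counting those of degree $d$, each stratum contributes $d_{i+1}\cdots d_s$ as soon as $d \geq 1 + \sum_{j>i}(d_j - 1)$; the binding stratum is $i = 1$. A direct inspection then shows $H_L(d) = |\mathcal{X}|$ for $d \geq 1 + \sum_{j=2}^s(d_j-1)$ while $H_L(\sum_{j=2}^s(d_j-1)) = |\mathcal{X}| - 1$, yielding $\mathrm{reg}(S/I(\mathcal{X})) = 1 + \sum_{i=2}^s(d_i - 1)$.
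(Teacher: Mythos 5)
Your argument is correct, but note that the paper itself offers no proof of this proposition: it is imported verbatim from \cite{carvalho-lopez-lopez}, so the comparison is with that reference rather than with anything internal to the paper. What you do differently is avoid verifying directly that the polynomials $f_{ij}=t_i\prod_{a\in A_j}(t_j-at_i)$ form (part of) a Gr\"obner basis, which is how the cited work proceeds; instead you only use them as certificates that $L\subseteq {\rm in}_\prec(I(\mathcal{X}))$, and then force equality by a multiplicity argument: the normalization of representatives (valid precisely because of the nested conditions $\{0,1\}\subseteq A_i$ and $a/b\in A_j$) gives $|\mathcal{X}|=1+\sum_{i=2}^s d_i\cdots d_s=\deg(S/L)$, Theorem~\ref{hilb=init} and Proposition~\ref{jan4-15}(ii) give $\deg(S/{\rm in}_\prec(I(\mathcal{X})))=|\mathcal{X}|$, and then the finite-length submodule ${\rm in}_\prec(I(\mathcal{X}))/L\subseteq S/L$ must vanish because $S/L$ has depth $1$ (equivalently, is unmixed of height $s-1$) by Lemma~\ref{yuriko-vila}. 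This last step is the one place where a naive ``same degree, so equal'' argument would fail, and you correctly supplied the unmixedness/Cohen--Macaulayness input; note it is genuinely needed, since equal Hilbert polynomials alone only control the ideals in large degrees. Your route buys a short, self-contained proof that leans on machinery already present in the paper (Lemma~\ref{yuriko-vila}, Macaulay's theorem), at the cost of an independent point count of $\mathcal{X}$; the reference's direct Gr\"obner-basis computation yields more (the explicit basis itself, used elsewhere in the paper, e.g.\ in Corollary~\ref{yuriko-pepe-vila-coro}) but is longer. Your regularity computation via the standard monomials of $L$ is also correct and matches the Hilbert-function definition of regularity used here: the stratum-by-stratum count shows $H_L(d)=|\mathcal{X}|$ exactly for $d\geq 1+\sum_{i=2}^s(d_i-1)$ and $H_L\bigl(\sum_{i=2}^s(d_i-1)\bigr)=|\mathcal{X}|-1$, the one missing monomial being $t_2^{d_2-1}\cdots t_s^{d_s-1}$ forced into the $a_1=0$ stratum.
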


Carvalho, Lopez-Neumann and L\'opez, showed the conjecture when the
$A_i$'s are subfields of $\mathbb{F}_q$. They also showed that the conjecture can be
reduced to:

\begin{conjecture}{\rm (Carvalho, Lopez-Neumann, and L\'opez
\cite{carvalho-lopez-lopez})}
\label{carvalho-lopez-lopez-conjecture-new} 
If $0\neq f\in S_d$ is a standard polynomial, with respect to
$\prec$, such that $(I(\mathcal{X}):f)\neq I(\mathcal{X})$ and 
$1\leq d\leq\sum_{i=2}^{s}(d_i-1)$, then  
$$
|V_\mathcal{X}(f)|\leq \deg(
S/I(\mathcal{X}))-\left(d_{k+2}-\ell+1\right)d_{k+3}\cdots d_s,
$$
where $0\leq k\leq s-2$ and $\ell$ are integers such that 
$d=\sum_{i=2}^{k+1}\left(d_i-1\right)+\ell$ and $1\leq \ell \leq
d_{k+2}-1$. 
\end{conjecture}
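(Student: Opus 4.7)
The plan is to use the footprint technique together with the explicit degree formulas of Section~\ref{degree-formulas-section}. By Lemma~\ref{degree-formula-for-the-number-of-zeros-proj} and Corollary~\ref{poly-bounds-initial}, since $(I(\mathcal{X})\colon f)\neq I(\mathcal{X})$, one has
$$
|V_{\mathcal{X}}(f)|=\deg(S/(I(\mathcal{X}),f))\leq \deg(S/({\rm in}_\prec(I(\mathcal{X})),{\rm in}_\prec(f))).
$$
By Proposition~\ref{carvalho-deg-reg}, ${\rm in}_\prec(I(\mathcal{X}))$ is the ideal $L$ of Section~\ref{degree-formulas-section}. Writing ${\rm in}_\prec(f)=t^a=t_r^{a_r}\cdots t_s^{a_s}$ with $a_r\geq 1$ and $0\leq a_i\leq d_i-1$ for $i>r$, Theorem~\ref{bounds-for-deg-init-f-1} yields an explicit value for $\deg(S/(L,t^a))$, and the conjecture reduces to the purely combinatorial claim
$$
\deg(S/I(\mathcal{X}))-\deg(S/(L,t^a))\geq (d_{k+2}-\ell+1)d_{k+3}\cdots d_s,
$$
where the left-hand side is either $\sum_{i=2}^{r+1}(d_i-a_i)\cdots(d_s-a_s)$ if $a_r\leq d_r$, or $(d_{r+1}-a_{r+1})\cdots(d_s-a_s)$ if $a_r\geq d_r+1$, subject to the degree constraint $a_r+\cdots+a_s=d=\sum_{i=2}^{k+1}(d_i-1)+\ell$.

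First I would dispose of the case $r=1$, which is exactly Theorem~\ref{yuriko-pepe-vila}: setting $b_0:=a_1\geq 1$ and $(e_i,b_i):=(d_{i+1},a_{i+1})$ turns the degree identity into the hypothesis of Proposition~\ref{aug-28-15}, and selecting the splitting index according to $k$ and $\ell$ delivers the required bound. For $r\geq 2$ with $a_r\geq d_r+1$, the target is the single product $(d_{r+1}-a_{r+1})\cdots(d_s-a_s)$; here $a_r\geq d_r$ forces $k\geq r-1$ through the degree constraint, and applying Lemma~\ref{pepe-vila} to the shifted sequence $e_i=d_{r+i}$, $b_i=a_{r+i}$ with the appropriately translated splitting index should produce the estimate. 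In the remaining case $r\geq 2$ with $a_r\leq d_r$, I would isolate the term $i=r+1$ of the sum $\sum_{i=2}^{r+1}(d_i-a_i)\cdots(d_s-a_s)$ and again invoke Lemma~\ref{pepe-vila}, attempting to use the gap $d_r-a_r$ as an additional resource to absorb the discrepancy between the sum structure and the target product.

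The hard part will be exactly this last case. Lemma~\ref{pepe-vila} and Proposition~\ref{aug-28-15} are tailored either to a pure product on the left or to a distinguished contribution $b_0\geq 1$ emanating from $t_1$; when $t_1\nmid t^a$ and $a_r\leq d_r$ simultaneously, neither structural feature is available and the combinatorial slack essentially vanishes. One likely needs a refined induction on $r$ that tracks how the tail exponents $a_{r+1},\ldots,a_s$ and the gap $d_r-a_r$ jointly govern the footprint bound, or, more ambitiously, a strengthening of Proposition~\ref{aug-28-15} permitting the distinguished index to lie at an arbitrary position rather than at the first one. This is precisely where the conjecture resists a direct application of the tools already developed and where the genuine difficulty resides.
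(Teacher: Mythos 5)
You have not proved the statement, and you could not have by this route: the statement is Conjecture~\ref{carvalho-lopez-lopez-conjecture-new}, which the paper itself leaves open, proving only the special cases where $t_1$ divides ${\rm in}_\prec(f)$ (Theorem~\ref{yuriko-pepe-vila}) and where $d=1$. Your treatment of the case $r=1$ is exactly the paper's argument (bound $|V_\mathcal{X}(f)|$ by $\deg(S/({\rm in}_\prec(I(\mathcal{X})),{\rm in}_\prec(f)))$ via Corollary~\ref{poly-bounds-initial}, evaluate this degree by Theorem~\ref{bounds-for-deg-init-f-1}, and apply Proposition~\ref{aug-28-15} with $b_0=a_1$), so that part is correct but only recovers what the paper already shows. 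For the case $r\geq 2$ with $a_r\leq d_r$ you concede that you have no argument, so the proposal is at best a partial result plus a research plan.

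Moreover, the sketch for $r\geq 2$, $a_r\geq d_r+1$ does not work. Take $s=3$, $d_2=d_3=5$, and $t^a=t_3^6$, a standard monomial of $L$ of degree $d=6\leq (d_2-1)+(d_3-1)$; then $k=1$, $\ell=2$, and the conjectured bound requires subtracting $(d_{3}-\ell+1)=4$ from $\deg(S/I(\mathcal{X}))$, whereas Theorem~\ref{bounds-for-deg-init-f-1} only subtracts the empty product $1$. This also refutes your claim that $a_r\geq d_r$ forces $k\geq r-1$ (here $r-1=2>k=1$). So in this regime the purely combinatorial inequality to which you reduce the conjecture is simply false: the footprint bound $\deg(S/({\rm in}_\prec(I(\mathcal{X})),{\rm in}_\prec(f)))$ can be strictly weaker than the conjectured bound, and no rearrangement of Lemma~\ref{pepe-vila} or Proposition~\ref{aug-28-15} applied to the initial monomial alone can close the gap; any proof must use more information about $f$ than ${\rm in}_\prec(f)$. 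This is precisely why the case $t_1\nmid{\rm in}_\prec(f)$ remains open in the paper.
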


We show a degree formula and use this to give an upper
bound for $|V_\mathcal{X}(f)|$.

\begin{theorem}\label{bounds-for-deg-init-f-1} Let $\prec$ be the lexicographical order on $S$ with 
$t_1\prec\cdots \prec t_s$ and let $f\neq 0$ be a standard polynomial
with ${\rm in}_\prec(f)=t_r^{a_r}\cdots t_s^{a_s}$ and $a_r\geq 1$.
Then $0\leq a_i\leq d_i-1$ for $i>r$ and 
\begin{eqnarray*}
|V_{\mathcal{X}}(f)|&\leq& \deg(S/({\rm
in}_\prec(I(\mathcal{X})),{\rm in}_\prec(f)))\\
&=&
\left\{\begin{array}{ll}\displaystyle\deg(
S/I(\mathcal{X}))-\sum_{i=2}^{r+1}(d_i-a_i)\cdots(d_s-a_s)&\mbox{ if } 
a_r\leq d_r,\\
\displaystyle\deg(
S/I(\mathcal{X}))-(d_{r+1}-a_{r+1})\cdots(d_s-a_s)&\mbox{ if }
a_r\geq d_r+1,
\end{array}
\right.
\end{eqnarray*}
where $(d_i-a_i)\cdots(d_s-a_s)=1$ if $i>s$ and $a_i=0$ for $i<r$.
\end{theorem}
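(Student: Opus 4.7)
The plan is to assemble three results already established in the paper. The key observation is that by Proposition~\ref{carvalho-deg-reg}, the initial ideal ${\rm in}_\prec(I(\mathcal{X}))$ coincides with the monomial ideal $L$ of Section~\ref{degree-formulas-section}, generated by the monomials $t_it_j^{d_j}$ for $1\leq i<j\leq s$. This identification turns Theorem~\ref{bounds-for-deg-init-f-1} into an easy consequence of Proposition~\ref{bounds-for-deg-init-f} (the degree formula for $S/(L,t^a)$) and Corollary~\ref{poly-bounds-initial} (the footprint bound on $|V_\mathcal{X}(f)|$).

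First I would check the inequalities $0\leq a_i\leq d_i-1$ for $i>r$. Since $f$ is standard with respect to $\prec$, the monomial $t^a={\rm in}_\prec(f)$ is a standard monomial of $S/{\rm in}_\prec(I(\mathcal{X}))=S/L$, hence not divisible by any of the generators $t_it_j^{d_j}$ of $L$. Because $a_r\geq 1$, the variable $t_r$ divides $t^a$; if $a_i\geq d_i$ for some $i>r$ we would have $t_rt_i^{d_i}\mid t^a$, a contradiction.

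Second, I would apply Corollary~\ref{poly-bounds-initial}. When $(I(\mathcal{X})\colon f)\neq I(\mathcal{X})$, it yields
\[
|V_\mathcal{X}(f)|=\deg(S/(I(\mathcal{X}),f))\leq \deg(S/({\rm in}_\prec(I(\mathcal{X})),{\rm in}_\prec(f)))=\deg(S/(L,t^a)).
\]
In the remaining case $(I(\mathcal{X})\colon f)=I(\mathcal{X})$, Lemma~\ref{degree-formula-for-the-number-of-zeros-proj} gives $|V_\mathcal{X}(f)|=0$, and the bound is automatic since $\deg(S/(L,t^a))\geq 0$ by Lemma~\ref{degree-initial-footprint}.

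Third I would invoke Proposition~\ref{bounds-for-deg-init-f} to evaluate $\deg(S/(L,t^a))$. The proposition states the formula in four cases (according to whether $r=s$ or $r<s$, and whether $a_r\leq d_r$ or $a_r\geq d_r+1$), whereas the theorem's conclusion packages them into two. The reconciliation is bookkeeping using the stated conventions $(d_i-a_i)\cdots(d_s-a_s)=1$ for $i>s$ and $a_i=0$ for $i<r$: when $r=s$ and $a_s\leq d_s$, the sum $\sum_{i=2}^{s+1}(d_i-a_i)\cdots(d_s-a_s)$ gains an extra $+1$ from its $i=s+1$ term, matching the trailing $-1$ in Proposition~\ref{bounds-for-deg-init-f}; when $r=s$ and $a_s\geq d_s+1$ the product $(d_{s+1}-a_{s+1})\cdots(d_s-a_s)$ equals $1$ by convention. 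The main technical content — the Hilbert series induction underlying Proposition~\ref{bounds-for-deg-init-f} — has already been carried out in Section~\ref{degree-formulas-section}, so no genuine obstacle remains at this stage; the work here is merely to translate that degree equality into an inequality on projective zero sets.
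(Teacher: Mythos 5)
Your proposal is correct and follows essentially the same route as the paper's proof: identify ${\rm in}_\prec(I(\mathcal{X}))$ with $L$ via Proposition~\ref{carvalho-deg-reg}, get the inequality from Corollary~\ref{poly-bounds-initial} (with the non-zero-divisor case handled separately), and get the equality from Proposition~\ref{bounds-for-deg-init-f}. Your explicit reconciliation of the four cases of Proposition~\ref{bounds-for-deg-init-f} with the two stated cases, using the conventions for empty products, is exactly the bookkeeping the paper leaves implicit.
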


\begin{proof} By Proposition~\ref{carvalho-deg-reg} 
the initial ideal of $I(\mathcal{X})$ is
generated by the set of all $t_it_j^{d_j}$ such that
$1\leq i<j\leq s$ and the degree of $S/({\rm in}_\prec(I(\mathcal{X}))$
is equal to the degree 
of $S/I(\mathcal{X})$. As ${\rm in}_\prec(f)$ is a standard monomial,
it follows that $0\leq a_i\leq d_i-1$ for $i>r$. Notice that if $f$
is not a zero-divisor of $S/I(\mathcal{X})$, then $V_{\mathcal{X}}(f)=\emptyset$.
Thus the inequality follows at once from
Corollary~\ref{poly-bounds-initial} and the equality follows 
from Proposition~\ref{bounds-for-deg-init-f}. 
\end{proof}

The next result gives some support for
Conjecture~\ref{carvalho-lopez-lopez-conjecture-new}. 

\begin{theorem}\label{yuriko-pepe-vila}
Let $\prec$ be the lexicographical order on $S$ with 
$t_1\prec\cdots \prec t_s$. If $0\neq f\in S_d$ is a standard
polynomial such that $1\leq d\leq\sum_{i=2}^{s}(d_i-1)$ and $t_1$
divides  
${\rm in}_\prec(f)$, then  
$$
|V_\mathcal{X}(f)|\leq \deg(
S/I(\mathcal{X}))-\left(d_{k+2}-\ell+1\right)d_{k+3}\cdots d_s,
$$
where $0\leq k\leq s-2$ and $\ell$ are integers such that 
$d=\sum_{i=2}^{k+1}\left(d_i-1\right)+\ell$ and $1\leq \ell \leq
d_{k+2}-1$. 
\end{theorem}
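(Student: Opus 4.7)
The plan is to reduce the theorem to a direct application of Theorem~\ref{bounds-for-deg-init-f-1} followed by the combinatorial inequality of Proposition~\ref{aug-28-15}. Write $\text{in}_\prec(f) = t_1^{a_1}t_2^{a_2}\cdots t_s^{a_s}$ with $a_1 \geq 1$ (since $t_1$ divides $\text{in}_\prec(f)$) and $0 \leq a_i \leq d_i - 1$ for $i \geq 2$ (since $f$ is standard, by Proposition~\ref{carvalho-deg-reg}). We are therefore in the case $r = 1$ of Theorem~\ref{bounds-for-deg-init-f-1}, and the two subcases $a_1 \leq d_1$ and $a_1 \geq d_1 + 1$ happen to produce identical bounds, so the case split on $a_1$ is vacuous. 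This gives
$$|V_\mathcal{X}(f)| \leq \deg(S/I(\mathcal{X})) - (d_2 - a_2)(d_3 - a_3)\cdots(d_s - a_s).$$

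The next step is to bound the product $\prod_{i=2}^s (d_i - a_i)$ from below using Proposition~\ref{aug-28-15}, applied with the substitution $m = s-1$, $e_i := d_{i+1}$, $b_i := a_{i+1}$ for $1 \leq i \leq m$, and $b_0 := a_1$. The hypotheses are verified by the constraints above together with $d_2 \leq \cdots \leq d_s$, and we use the same integer $k$ as in the statement of the theorem (which lies in the admissible range $0 \leq k \leq s-2 = m-1$). The conclusion, after reindexing, reads
$$\prod_{i=2}^s (d_i - a_i) \;\geq\; \Bigl(\textstyle\sum_{i=2}^{k+2}(d_i - a_i) - (k-1) - a_1 - \sum_{i=k+3}^s a_i\Bigr)\, d_{k+3}\cdots d_s.$$

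The final step is a short algebraic identity. Using $a_1 + \sum_{i=2}^s a_i = d$, the terms involving $a_i$ collapse to $\sum_{i=2}^{k+2} d_i - d - (k-1)$, and then substituting the hypothesis $d = \sum_{i=2}^{k+1}(d_i - 1) + \ell$ yields exactly $d_{k+2} - \ell + 1$. Chaining the two inequalities then gives the claimed bound.

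There is no real obstacle: all the deep work has already been done, namely the degree computation in Proposition~\ref{bounds-for-deg-init-f} and the elementary-but-delicate product inequality in Lemma~\ref{pepe-vila}/Proposition~\ref{aug-28-15}. The only point requiring care is keeping the index shift $e_i = d_{i+1}$ consistent so that the arithmetic simplification matches the expression appearing in the theorem; the restriction to monomials divisible by $t_1$ is precisely what allows us to invoke Proposition~\ref{aug-28-15} with $b_0 = a_1 \geq 1$, which is the hypothesis that drives the bound.
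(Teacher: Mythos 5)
Your proposal is correct and follows essentially the same route as the paper's proof: reduce via Theorem~\ref{bounds-for-deg-init-f-1} with $r=1$ (where the two cases coincide), then apply Proposition~\ref{aug-28-15} with $m=s-1$, $e_i=d_{i+1}$, $b_i=a_{i+1}$, $b_0=a_1$, and finish with the same arithmetic simplification using $\sum_i a_i=d$ and $d=\sum_{i=2}^{k+1}(d_i-1)+\ell$. The only presentational difference is that the paper first reduces to the zero-divisor case and phrases the last step as a substitution of $-\ell$, which is immaterial.
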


\begin{proof} By
Lemma~\ref{degree-formula-for-the-number-of-zeros-proj} we may assume
that $(I(\mathcal{X}):f)\neq I(\mathcal{X})$. Let  $t^a={\rm
in}_\prec(f)$ be the initial monomial of 
$f$. By Proposition~\ref{carvalho-deg-reg}, we can
write 
$$
t^a=t_1^{a_1}\cdots t_s^{a_s},
$$
with $a_1\geq 1$, $0\leq a_i\leq d_i-1$ for
$i>1$. 
By Lemmas~\ref{degree-formula-for-the-number-of-zeros-proj} and 
\ref{degree-initial-footprint} it suffices to show that the
following inequality holds
\begin{equation}\label{dec14-15}
\deg(S/({\rm in}_\prec(I(\mathcal{X})),t^a))\leq 
\deg(S/I(\mathcal{X}))
-\left(d_{k+2}-\ell+1\right)d_{k+3}\cdots 
d_s.
\end{equation}

If we substitute $-\ell=\sum_{i=2}^{k+1}(d_i-1)-\sum_{i=1}^sa_i$ in
Eq.~(\ref{dec14-15}), and use the formula for the degree of 
$S/({\rm in}_\prec(I(\mathcal{X})),t^a)$ given in
Theorem~\ref{bounds-for-deg-init-f-1}, we need only show that the
following inequalities hold for $r=1$: 
\begin{eqnarray}
& &\sum_{i=2}^{r+1}(d_i-a_i)\cdots(d_s-a_s)\geq\label{dec14-1} \\ 
& &\ \ \ \ \ \ \ \ \ \ \ \ \ \ \ \ \ \ \left(\sum_{i=2}^{k+2}(d_i-a_i)-(k-1)-a_1-\sum_{i=k+3}^sa_i\right)d_{k+3}\cdots
d_s\mbox{ if }a_r\leq d_r,\nonumber\\ 
& &\prod_{i=r+1}^{s}(d_i-a_i) \geq\left(\sum_{i=2}^{k+2}(d_i-a_i)-(k-1)-a_1-\sum_{i=k+3}^sa_i\right)d_{k+3}\cdots
d_s\mbox{ if }a_r\geq d_r+1,\label{dec14-2}
\end{eqnarray}
for $0\leq k\leq s-2$, where $(d_i-a_i)\cdots(d_s-a_s)=1$ if $i>s$ and $a_i=0$ for $i<r$. 

Assume $r=1$. Then Eqs.~(\ref{dec14-1}) and (\ref{dec14-2})
are the same. Thus we need only show the inequality 

\begin{equation*}
\prod_{i=2}^{s}(d_i-a_i) \geq\left(\sum_{i=2}^{k+2}(d_i-a_i)-(k-1)-a_1-\sum_{i=k+3}^sa_i\right)d_{k+3}\cdots
d_s,
\end{equation*}
for $0\leq k\leq s-2$. This inequality follows making $m=s-1$, $e_i=d_{i+1}$, $b_i=a_{i+1}$
for $i=1,\ldots,m$, and $b_0=a_1$ in Proposition~\ref{aug-28-15}.
\end{proof}

The following resembles
Conjecture~\ref{carvalho-lopez-lopez-conjecture} when $d=1$:

\begin{conjecture}\cite[Conjecture~4.9]{tohaneanu-vantuyl} Let $\mathbb{X}$ be a
finite set of points in $\mathbb{P}^{s-1}$. If $I(\mathbb{X})$ is a
complete intersection generated by $f_1,\ldots,f_{s-1}$, with 
$e_i=\deg(f_i)$ for $i=1,\ldots,s-1$, and $2\leq e_i\leq e_{i+1}$ for
all $i$, then $\delta_\mathbb{X}(1)\geq (e_1-1)e_2\cdots e_{s-1}$.
\end{conjecture}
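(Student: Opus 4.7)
The plan is to combine the paper's algebraic reformulation of the minimum distance with a Bezout-style bound on hyperplane sections of a complete intersection.

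By Theorem~\ref{min-dis-vi}, one has
$\delta_\mathbb{X}(1) = \deg(S/I(\mathbb{X})) - \max\{\deg(S/(I(\mathbb{X}), f)) \mid f \in \mathcal{F}_1\}$.
Since $I(\mathbb{X}) = (f_1, \ldots, f_{s-1})$ is a complete intersection, Bezout gives $\deg(S/I(\mathbb{X})) = e_1 \cdots e_{s-1}$. Using Lemma~\ref{degree-formula-for-the-number-of-zeros-proj}, the stated inequality is equivalent to
$|V_\mathbb{X}(f)| \leq e_2 \cdots e_{s-1}$
for every linear form $f$ that is a zero-divisor of $S/I(\mathbb{X})$ and is not in $I(\mathbb{X})$. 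This reduces the problem to bounding the number of points of $\mathbb{X}$ lying on an arbitrary hyperplane $H = V(f)$.

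After a linear change of coordinates I would take $f = t_1$. Then $V_\mathbb{X}(t_1)$ corresponds to the projective zero set in $\mathbb{P}^{s-2}$ (with coordinates $t_2, \ldots, t_s$) of the forms $\bar f_i := f_i(0, t_2, \ldots, t_s)$, which have degrees $e_i$. Since $f = t_1$ is a zero-divisor, $V_\mathbb{X}(t_1) \neq \emptyset$, so the ideal $J = (\bar f_1, \ldots, \bar f_{s-1})$ in $K[t_2, \ldots, t_s]$ has height at most $s-2$; in particular the $\bar f_i$ do not form a regular sequence. The intended next step is to show that, after a suitable change of the generating set, $s-2$ of them form a regular sequence in $K[t_2, \ldots, t_s]$, with the omitted generator lying in the radical of the ideal generated by the others. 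Choosing to omit a generator of minimal degree $e_1$, Bezout in $\mathbb{P}^{s-2}$ would then bound $|V_\mathbb{X}(t_1)|$ by $e_2 \cdots e_{s-1}$, as required.

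The hard part is justifying that one can indeed omit a generator of degree $e_1$: the dependence among $\bar f_1, \ldots, \bar f_{s-1}$ is controlled by a Koszul syzygy, but the degree of the omissible generator is not a priori the smallest, and naively one may only be able to drop some $\bar f_j$ with $j > 1$, producing a Bezout bound that exceeds $e_2 \cdots e_{s-1}$. A rigorous argument would need either a Hilbert--Burch or Eagon--Northcott style analysis of the syzygies of the restricted sequence, or a linkage argument using the fact that $I(\mathbb{X})$ and $(I(\mathbb{X}) : t_1)$ are linked through the hyperplane section. Alternatively, in the spirit of this paper, one could try the footprint approach of Corollary~\ref{footprint-lowerbound-vanishing}: fix a monomial order adapted to $t_1$, compute $\mathrm{in}_\prec(I(\mathbb{X}))$ (or at least its degree data), and apply Lemma~\ref{degree-initial-footprint} to bound $\deg(S/(I(\mathbb{X}), t_1))$ from above by $\deg(S/(\mathrm{in}_\prec(I(\mathbb{X})), t_1))$. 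The combinatorial computation would then mirror Propositions~\ref{bounds-for-deg-init-f} and \ref{aug-28-15}, and this is where the main technical effort would concentrate.
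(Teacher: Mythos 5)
There is a basic mismatch here: the statement you were asked about is not a theorem of this paper but a conjecture quoted from Toh\v{a}neanu and Van Tuyl, and the paper offers no proof of it. The authors only observe that it resembles Conjecture~\ref{carvalho-lopez-lopez-conjecture} in degree $d=1$ and then verify the analogous degree-one statement for projective nested cartesian sets, where the explicit Gr\"obner basis of $I(\mathcal{X})$ (Proposition~\ref{carvalho-deg-reg}) makes the footprint computation of Proposition~\ref{bounds-for-deg-init-f} available. So there is no ``paper proof'' for your argument to be measured against, and a complete proof of the statement would in fact settle an open problem.

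Your reduction is correct as far as it goes: by Theorem~\ref{min-dis-vi}, Lemma~\ref{degree-formula-for-the-number-of-zeros-proj} and B\'ezout ($\deg(S/I(\mathbb{X}))=e_1\cdots e_{s-1}$), the conjecture is equivalent to the bound $|V_\mathbb{X}(f)|\leq e_2\cdots e_{s-1}$ for every linear form $f\notin I(\mathbb{X})$ that is a zero-divisor on $S/I(\mathbb{X})$. But everything after that is a plan, not a proof, and the step you yourself flag as ``the hard part'' is precisely the entire content of the conjecture: after restricting to the hyperplane $t_1=0$, nothing guarantees that the $s-2$ restricted forms of degrees $e_2,\ldots,e_{s-1}$ form a regular sequence whose zero scheme contains $V_\mathbb{X}(t_1)$, with the degree-$e_1$ restriction lying in the radical of the ideal they generate; dropping an arbitrary omissible generator, or passing to generic combinations of the restrictions, only yields bounds involving the larger degrees, which is strictly weaker than $e_2\cdots e_{s-1}$. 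The alternative route you sketch via Lemma~\ref{degree-initial-footprint} and Corollary~\ref{footprint-lowerbound-vanishing} also does not close the gap, because for an arbitrary complete intersection one has no control over ${\rm in}_\prec(I(\mathbb{X}))$ comparable to Proposition~\ref{carvalho-deg-reg}, so there is no analogue of the combinatorial computations in Propositions~\ref{bounds-for-deg-init-f} and~\ref{aug-28-15} to carry out. In short, the proposal correctly reformulates the problem using the paper's machinery, but the essential inequality remains unproved, as it does in the literature.
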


Next we show that the corresponding conjecture is true for projective nested
cartesian codes by proving that
Conjecture~\ref{carvalho-lopez-lopez-conjecture} is true for $d=1$. 
\begin{proposition} $\delta_\mathcal{X}(1)=d_2\cdots d_s$. 
\end{proposition}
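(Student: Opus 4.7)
The strategy is to prove the matching inequalities $\delta_{\mathcal{X}}(1)\le d_2\cdots d_s$ and $\delta_{\mathcal{X}}(1)\ge d_2\cdots d_s$.

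For the upper bound I would test the code on the linear form $t_1$. Using the nested condition~(ii) of Definition~\ref{pncc}, every point of $\mathcal{X}$ admits a unique ``canonical'' affine representative $(0,\ldots,0,1,\alpha_{k+1},\ldots,\alpha_s)$ with $\alpha_j\in A_j$ for $j>k$, where $k$ is the first non-zero coordinate. Stratifying $\mathcal{X}$ according to this $k$ recovers $|\mathcal{X}|=\sum_{k=1}^{s}\prod_{j=k+1}^{s}d_j$, and the stratum $k=1$, of cardinality $d_2\cdots d_s$, is exactly the complement of $V_{\mathcal{X}}(t_1)$. Since $t_1\notin I(\mathcal{X})$ (evaluating at $[1,\ldots,1]$) and $t_1$ is a zero-divisor modulo $I(\mathcal{X})$ (it vanishes at $[0,1,\ldots,1]$), ${\rm ev}_1(t_1)$ is a non-zero codeword of weight $d_2\cdots d_s$, giving the upper bound.

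For the lower bound I would invoke Corollary~\ref{footprint-lowerbound-vanishing}. Each $t_i$ vanishes at the point of $\mathcal{X}$ with $0$ in slot~$i$ and $1$ elsewhere, so every variable is a zero-divisor of $S/I(\mathcal{X})$ and hence $\delta_{\mathcal{X}}(1)\ge{\rm fp}_{I(\mathcal{X})}(1)$. By Proposition~\ref{carvalho-deg-reg} every generator of $L:={\rm in}_{\prec}(I(\mathcal{X}))$ has degree at least $1+d_j\ge 3$, so the degree-one standard monomials are precisely $t_1,\ldots,t_s$, and it suffices to show $\deg(S/(L,t_r))\le\deg(S/L)-d_2\cdots d_s$ for every $r\in\{1,\ldots,s\}$. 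Substituting $t^a=t_r$ (so $a_r=1\le d_r$ and $a_i=0$ for $i\ne r$) into Proposition~\ref{bounds-for-deg-init-f} and collecting terms yields the unified formula
\[
\deg(S/(L,t_r))=\deg(S/L)-d_{r+1}\cdots d_s\bigl[(d_r-1)T_r+1\bigr],\qquad T_r:=\sum_{i=2}^{r}d_i\cdots d_{r-1},
\]
with the convention that empty products equal $1$; the stray ``$-1$'' appearing in the $r=s$ case of Proposition~\ref{bounds-for-deg-init-f} is absorbed because $d_{s+1}\cdots d_s=1$. The required bound therefore reduces to the purely arithmetic inequality
\[
(d_r-1)T_r+1\ \ge\ d_2\cdots d_r\qquad (1\le r\le s).
\]

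To finish I would prove this inequality by induction on $r$. The cases $r=1,2$ are immediate equalities ($1\ge 1$ and $d_2\ge d_2$). For the inductive step ($r\ge 3$) the recursion $T_r=1+d_{r-1}T_{r-1}$ transforms the target into $(d_r-1)d_{r-1}T_{r-1}\ge d_r(d_2\cdots d_{r-1}-1)$; combining the induction hypothesis $(d_{r-1}-1)T_{r-1}\ge d_2\cdots d_{r-1}-1$ (whose right-hand side is strictly positive because $d_2\cdots d_{r-1}\ge 2$) with the one-line calculation $(d_r-1)d_{r-1}\ge d_r(d_{r-1}-1)$, equivalent to $d_r\ge d_{r-1}$ and true by the nested hypothesis~(iii), completes the step. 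The main obstacle is precisely that Theorem~\ref{yuriko-pepe-vila} only addresses initial monomials divisible by $t_1$, so the monomials $t_r$ with $r\ge 2$ must be handled separately; the short induction above, relying only on the nested ordering $d_1\le\cdots\le d_s$, is exactly what is needed to carry this out.
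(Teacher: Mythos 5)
Your proof is correct, and it is more self-contained than the one in the paper, though the computational core is the same. The paper's proof does not treat the two bounds separately: it simply observes that, by the reduction of Carvalho, Lopez-Neumann and L\'opez \cite{carvalho-lopez-lopez}, it suffices to verify Conjecture~\ref{carvalho-lopez-lopez-conjecture-new} for $d=1$ (the existence of a codeword of weight $d_2\cdots d_s$, i.e.\ the upper bound, is implicitly outsourced to \cite[Lemma~3.1]{carvalho-lopez-lopez}, as in Corollary~\ref{yuriko-pepe-vila-coro}); it then takes a degree-one standard zero-divisor $f$ with ${\rm in}_\prec(f)=t_r$, applies Theorem~\ref{bounds-for-deg-init-f-1} to get $|V_\mathcal{X}(f)|\le \deg(S/I(\mathcal{X}))-\sum_{i=2}^{r+1}(d_i-a_i)\cdots(d_s-a_s)$, and proves the arithmetic inequality $\sum_{i=2}^{r+1}(d_i-a_i)\cdots(d_s-a_s)\ge d_2\cdots d_s$ by induction on $s$. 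You instead prove the upper bound directly, by stratifying $\mathcal{X}$ by the position of the first nonzero coordinate (which is exactly where hypothesis~(ii) of Definition~\ref{pncc} enters) and exhibiting ${\rm ev}_1(t_1)$ as a codeword of weight $d_2\cdots d_s$; and you phrase the lower bound through the footprint bound of Corollary~\ref{footprint-lowerbound-vanishing}, which is equivalent in content to the paper's use of Theorem~\ref{bounds-for-deg-init-f-1} since both rest on Proposition~\ref{bounds-for-deg-init-f} applied to $t^a=t_r$. Your arithmetic inequality $(d_r-1)T_r+1\ge d_2\cdots d_r$, proved by induction on $r$ via the recursion $T_r=1+d_{r-1}T_{r-1}$, is the same inequality as the paper's after dividing out the common factor $d_{r+1}\cdots d_s$; your version is spelled out in more detail than the paper's ``follows readily by induction on $s$.'' What your route buys is independence from the reduction step of \cite{carvalho-lopez-lopez} and an explicit minimum-weight codeword; what the paper's route buys is brevity and consistency with its general framework (Theorem~\ref{yuriko-pepe-vila} and Corollary~\ref{yuriko-pepe-vila-coro}), and its treatment of arbitrary standard polynomials of degree one via their initial monomials is subsumed in your argument by the footprint inequality itself.
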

\begin{proof} It suffices to show that
Conjecture~\ref{carvalho-lopez-lopez-conjecture-new} is true for
$d=1$, that is, if $0\neq f\in S_1$ is a standard polynomial 
such that $(I(\mathcal{X}):f)\neq I(\mathcal{X})$, then we
must show the inequality  
$$
|V_\mathcal{X}(f)|\leq \deg(S/I(\mathcal{X}))-d_2\cdots d_s.
$$

As ${\rm in}_\prec(f)=t_r$ for some $1\leq r\leq s$, we can 
write ${\rm in}_\prec(f)=t_1^{a_1}\cdots t_r^{a_r}\cdots t_s^{a_s}$,
where $a_r=1$, $a_i=0$ for 
$i\neq r$, and $2\leq d_2\leq\cdots\leq d_s$. Therefore, by 
Theorem~\ref{bounds-for-deg-init-f-1}, we get
$$
|V_{\mathcal{X}}(f)|\leq\deg(
S/I(\mathcal{X}))-\sum_{i=2}^{r+1}(d_i-a_i)\cdots(d_s-a_s),
$$
where $(d_i-a_i)\cdots(d_s-a_s)=1$ if $i>s$. Hence the proof reduces to
showing the inequality
$$
\sum_{i=2}^{r+1}(d_i-a_i)\cdots(d_s-a_s)\geq d_2\cdots d_s.
$$

If $r=s$, this inequality
follows readily by induction on $s\geq 2$. If $r<s$ the inequality
also follows by induction on $s$ by noticing that, in this case, both
sides of the inequality have $d_s$ as a common factor because $d_s$
appears in all terms of the summation of the left hand side.
\end{proof}

Let $\mathcal{L}_d$ be the $K$-vector space 
generated by all $t^a\in S_d$ such that $t_1$ divides $t^a$ 
and let $C_d$ be the image of $\mathcal{L}_d$ under the
evaluation map ${\rm ev}_d$. From the next result it follows that the
minimum distance of $C_\mathcal{X}(d)$ 
proposed in Conjecture~\ref{carvalho-lopez-lopez-conjecture} is in
fact the minimum distance of the evaluation linear code
$C_d$.

\begin{corollary}\label{yuriko-pepe-vila-coro}
Let $\mathcal{L}_d$ be the $K$-vector space 
generated by all $t^a\in S_d$ such that $t^a$ contains $t_1$. 
If $1\leq d\leq\sum_{i=2}^{s}(d_i-1)$, then  
$$
\max\{|V_\mathcal{X}(f)|:\, f\notin I(\mathcal{X}),\,
f\in\mathcal{L}_d\} =\deg(S/I(\mathcal{X}))-\left(d_{k+2}-\ell+1\right)d_{k+3}\cdots d_s,
$$
where $0\leq k\leq s-2$ and $\ell$ are integers, 
$d=\sum_{i=2}^{k+1}\left(d_i-1\right)+\ell$, and $1\leq \ell \leq
d_{k+2}-1$. 
\end{corollary}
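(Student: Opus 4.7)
The equality is the conjunction of two inequalities; I would prove each in turn. For the upper bound, given $f \in \mathcal{L}_d \setminus I(\mathcal{X})$, the plan is to produce a standard polynomial $h$ with $V_\mathcal{X}(h) = V_\mathcal{X}(f)$ and $t_1 \mid \operatorname{in}_\prec(h)$, after which Theorem~\ref{yuriko-pepe-vila} (whose hypothesis $1 \leq d \leq \sum_{i=2}^s(d_i-1)$ is given) yields the bound. The key ingredient is an explicit Gr\"obner basis for $I(\mathcal{X})$: set
$$g_{ij} \;=\; t_i \prod_{a \in A_j} (t_j - a\, t_i), \qquad 1 \leq i < j \leq s.$$
By the nested condition (ii) of Definition~\ref{pncc}, each $g_{ij}$ vanishes on $\mathcal{X}$ (the factor $\prod_a (t_j - a t_i)$ kills points with $p_i \neq 0$, using that $p_j/p_i \in A_j$, while the prefix $t_i$ handles points with $p_i = 0$), and $\operatorname{in}_\prec(g_{ij}) = t_i t_j^{d_j}$. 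Since these leading monomials are exactly the generators of $\operatorname{in}_\prec(I(\mathcal{X}))$ listed in Proposition~\ref{carvalho-deg-reg}, the family $\{g_{ij}\}$ is a Gr\"obner basis. Crucially, every monomial of $g_{ij}$ is divisible by $t_i$. Now in each step of the division algorithm applied to $f \in \mathcal{L}_d$, the term $\lambda t^c t_i t_j^{d_j}$ being eliminated satisfies $t_1 \mid t^c t_i$, which forces either $i = 1$ (so $t_1 \mid g_{ij}$ already) or $t_1 \mid t^c$; in both cases $t_1 \mid t^c g_{ij}$, so the reduction preserves $\mathcal{L}_d$ monomial by monomial. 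Hence the normal form $h$ is nonzero, standard, and in $\mathcal{L}_d$, so in particular $t_1 \mid \operatorname{in}_\prec(h)$, and Theorem~\ref{yuriko-pepe-vila} applies to $h$.

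For the matching lower bound, I would exhibit an explicit $f \in \mathcal{L}_d$ realizing the maximum. Pick distinct elements $b_1, \ldots, b_{\ell-1} \in A_{k+2} \setminus \{0\}$ (possible since $\ell - 1 \leq d_{k+2} - 2$ and $|A_{k+2} \setminus \{0\}| = d_{k+2} - 1$) and set
$$f \;=\; t_1 \cdot \prod_{i=2}^{k+1} \prod_{a \in A_i \setminus \{0\}} (t_i - a\, t_1) \;\cdot\; \prod_{j=1}^{\ell-1} (t_{k+2} - b_j\, t_1),$$
which has total degree $1 + \sum_{i=2}^{k+1}(d_i-1) + (\ell-1) = d$ and lies in $\mathcal{L}_d$. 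Decompose $\mathcal{X} = \mathcal{X}' \sqcup \mathcal{X}''$ where $\mathcal{X}' = \{[P] \in \mathcal{X} : p_1 = 0\}$; by condition (ii) every point of $\mathcal{X}''$ has a unique representative of the form $(1, q_2, \ldots, q_s)$ with $q_j \in A_j$, so $|\mathcal{X}''| = d_2 \cdots d_s$ and $|\mathcal{X}'| = \deg(S/I(\mathcal{X})) - d_2 \cdots d_s$. All of $\mathcal{X}'$ lies in $V_\mathcal{X}(f)$ since $t_1$ divides $f$. Evaluating the remaining factors on $(1, q_2, \ldots, q_s)$, nonvanishing of $f$ there is equivalent to: $q_i = 0$ for $i = 2, \ldots, k+1$, $q_{k+2} \in A_{k+2} \setminus \{b_1, \ldots, b_{\ell-1}\}$, and $q_{k+3}, \ldots, q_s$ arbitrary; counting yields exactly $(d_{k+2} - \ell + 1)\, d_{k+3} \cdots d_s$ nonzeros (so in particular $f \notin I(\mathcal{X})$), whence $|V_\mathcal{X}(f)| = \deg(S/I(\mathcal{X})) - (d_{k+2}-\ell+1)\, d_{k+3} \cdots d_s$, as required.

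The main obstacle is the Gr\"obner basis step in the first paragraph: selecting a Gr\"obner basis whose elements are each divisible by a variable of the ambient ring, and observing that this divisibility is exactly the property needed to keep the division algorithm inside $\mathcal{L}_d$. Once this is in hand, the upper bound is an immediate application of Theorem~\ref{yuriko-pepe-vila}, and the lower bound reduces to the elementary count in the second paragraph.
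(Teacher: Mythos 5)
Your proof is correct and follows the same skeleton as the paper's: reduce an arbitrary $f\in\mathcal{L}_d\setminus I(\mathcal{X})$ modulo a Gr\"obner basis of $I(\mathcal{X})$ to a standard polynomial of degree $d$ still lying in $\mathcal{L}_d$ (so that its leading monomial is divisible by $t_1$ and $V_\mathcal{X}$ is unchanged), apply Theorem~\ref{yuriko-pepe-vila} to get the inequality $\leq$, and then exhibit a polynomial in $\mathcal{L}_d\setminus I(\mathcal{X})$ attaining the bound. Where you differ is in how the two halves are justified. For the upper bound, the paper quotes the Gr\"obner basis of \cite{carvalho-lopez-lopez} and shows the remainder stays in $\mathcal{L}_d$ by setting $t_1=0$ and using that a nonzero standard polynomial cannot lie in $I(\mathcal{X})$; you instead write the basis explicitly as $g_{ij}=t_i\prod_{a\in A_j}(t_j-a\,t_i)$, certify it is a Gr\"obner basis by matching leading monomials with the initial ideal of Proposition~\ref{carvalho-deg-reg}, and track the division algorithm step by step using that every monomial of $g_{ij}$ is divisible by $t_i$ --- which is exactly the structural fact that makes the paper's $t_1=0$ trick work, made explicit. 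For the equality, the paper simply refers to the proof of Lemma~3.1 in \cite{carvalho-lopez-lopez} for the existence of an extremal polynomial, whereas you construct one explicitly ($t_1$ times suitable products of forms $t_i-a\,t_1$) and count its nonzeros on the part of $\mathcal{X}$ with first coordinate nonzero, using the normalization $(1,q_2,\ldots,q_s)$, $q_j\in A_j$, guaranteed by the nestedness conditions. Your version is therefore self-contained where the paper relies on the external reference, at the cost of redoing computations available there; both arguments are sound, and your zero count and degree bookkeeping ($1+\sum_{i=2}^{k+1}(d_i-1)+(\ell-1)=d$, $\ell-1\leq d_{k+2}-2$) check out.
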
 

\begin{proof} Take $f\in\mathcal{L}_d\setminus I(\mathcal{X})$. 
Let $\prec$ be the lexicographical order with $t_1\prec\cdots\prec t_s$ and
let $\mathcal{G}$ be the Gr\"obner basis of $I(\mathcal{X})$ given in
\cite[Proposition~2.14]{carvalho-lopez-lopez}. By the
division algorithm, we can write $f=\sum_{i=1}^ra_ig_i+g$, where
$g_i\in\mathcal{G}$ for all $i$ and $g$ is a standard polynomial of
degree $d$. The polynomial $g$ is again in
$\mathcal{L}_d\setminus I(\mathcal{X})$. Indeed if
$g\notin\mathcal{L}_d$, there is at least one monomial
of $g$ that do not contain $t_1$, then 
making $t_1=0$ in the last equality, we get an
equality of the form $0=\sum_{i=1}^rb_ig_i+h$, where $h$ is a non-zero
standard polynomial of $I(\mathcal{X})$, a contradiction. Hence, by
Theorem~\ref{yuriko-pepe-vila}, the inequality $\leq$ follows because
$|V_\mathcal{X}(f)|=|V_\mathcal{X}(g)|$. To show 
equality notice that according to the proof of
\cite[Lemma~3.1]{carvalho-lopez-lopez}, there is a polynomial $f$ of
degree $d$ in 
$\mathcal{L}_d\setminus I(\mathcal{X})$ whose number of zeros in $\mathcal{X}$ is equal to the right
hand side of the required equality. 
\end{proof}

\section{Examples}\label{examples-section}

In this section we show some examples that illustrates how some of our results
can be used in practice. 

\begin{example}\label{dec4-15}
Let $K$  be the field $\mathbb{F}_3$, let
$\mathbb{X}$ be the subset of $\mathbb{P}^3$ given by 
$$
\mathbb{X}=\{[e_1],\,
[e_2],\,[e_3],\,[e_4],\,[(1,-1,-1,1)],\,[(1,1,1,1)],\,[(-1,-1,1,1)],\,[(-1,1,-1,1)]\},
$$
where $e_i$ is the $i$-th unit vector, and let $I=I(\mathbb{X})$ be
the vanishing 
ideal of $\mathbb{X}$. Using Lemma~\ref{primdec-ix-a} and 
{\em Macaulay\/}$2$ \cite{mac2}, 
we get that $I$ is the ideal of $S=K[t_1,t_2,t_3,t_4]$ generated by
the binomials $t_1t_2-t_3t_4,\, t_1t_3-t_2t_4,\,t_2t_3-t_1t_4$. 
Hence, using Theorem~\ref{wolmer-obs} and the procedure below
for {\em Macaulay\/}$2$ \cite{mac2}, we get 
\begin{eqnarray*}
\hspace{-11mm}&&\left.
\begin{array}{c|c|c|c|c}
d & 1 & 2 & 3 & \cdots\\
   \hline
{\deg}(S/I) & 8 & 8 & 8 &\cdots
 \\ 
   \hline
 H_I(d)    \    & 4 & 7  & 8& 
 \cdots\\ 
   \hline
 \delta_I(d) &4& 2& 1& \cdots
\end{array}
\right.
\end{eqnarray*}

\begin{verbatim}
q=3
S=ZZ/q[t1,t2,t3,t4]
I=ideal(t1*t2-t3*t4,t1*t3-t2*t4,t2*t3-t1*t4)
M=coker gens gb I
h=(d)->min apply(apply(apply(apply(toList
(set(0..q-1))^**(hilbertFunction(d,M))-
(set{0})^**(hilbertFunction(d,M)),toList),x->basis(d,M)*vector x),
z->ideal(flatten entries z)),x-> degree quotient(I,x))
apply(1..2,h)--this gives the minimum distance in degrees 1,2  
\end{verbatim}
\end{example}

\begin{example}\label{example-min-dis} Let $\mathbb{X}$ be the set in $\mathbb{P}^3$
parameterized by $y_1y_2,y_2y_3,y_3y_4,y_1y_4$ over the field
$\mathbb{F}_3$. Using Corollary~\ref{oct15-15-1}, Theorem~\ref{vanishing-ideals-theo}, and the
following procedure for {\em Macaulay\/}$2$
\cite{mac2} we get 

\begin{eqnarray*}
\hspace{-11mm}&&\left.
\begin{array}{c|c|c|c|c}
d & 1 & 2 & 3 & \cdots\\
   \hline
 |\mathbb{X}| & 16 & 16 & 16 &\cdots
 \\ 
   \hline
 H_\mathbb{X}(d)    \    & 4 & 9  & 16& 
 \cdots\\ 
   \hline
 \delta_{\mathbb{X}}(d) &9& 4& 1& \cdots \\ 
\hline
 {\rm fp}_{I(\mathbb{X})}(d) &6& 3& 1& \cdots \\ 
\end{array}
\right.
\end{eqnarray*}

\begin{verbatim}
q=3
R=ZZ/q[y1,y2,y3,y4,z,t1,t2,t3,t4,MonomialOrder=>Eliminate 5];
f1=y1*y2, f2=y2*y3, f3=y3*y4, f4=y4*y1
J=ideal(y1^q-y1,y2^q-y2,y3^q-y3,y4^q-y4,t1-f1*z,t2-f2*z,t3-f3*z,t4-f4*z)
C4=ideal selectInSubring(1,gens gb J)
S=ZZ/q[t1,t2,t3,t4];
I=sub(C4,S)
M=coker gens gb I 
h=(d)->degree M - max apply(apply(apply(apply(
toList (set(0..q-1))^**(hilbertFunction(d,M))-
(set{0})^**(hilbertFunction(d,M)), toList),x->basis(d,M)*vector x),
z->ideal(flatten entries z)),x-> if not
quotient(I,x)==I then degree ideal(I,x) else 0)--The function h(d) 
--gives the minimum distance in degree d 
h(1), h(2)
f=(x1) -> degree ideal(x1,leadTerm gens gb I)
fp=(d)->degree M - max apply(flatten entries basis(d,M),f)--The
--function fp(d) gives the footprint in degree d
L=toList(1..regularity M) 
apply(L,fp)
\end{verbatim}
\end{example}

\begin{example}\label{footprint-bad}
Let $\mathbb{X}$ be a projective torus in $\mathbb{P}^2$ over the
field $K=\mathbb{F}_3$. The vanishing ideal $I=I(\mathbb{X})$ is
generated by $t_1^2-t_3^2$, $t_2^2-t_3^2$. The polynomial $F=(t_1-t_2)^d$
is a zero-divisor of $S/I$ because $(t_1-t_3,t_2-t_3)$ is an associated
prime of $S/I$ and $F\notin I$ because $F$ does not vanish at
$[(1,-1)]$. Hence, $\mathcal{F}_d\neq\emptyset$. If
$\prec$ is the lexicographical order $t_1\succ t_2\succ t_3$, then
$t_3^d$ is a standard monomial which is not a zero-divisor of $S/I$ and
$S/{\rm in}_\prec(I)$. This causes 
$\deg(S/({\rm in}_\prec(I),t_3^2))$ to be greater than $\deg(S/I)$.
Using {\em Macaulay\/}$2$ \cite{mac2} we obtain 
\begin{eqnarray*}
\hspace{-11mm}&&\left.
\begin{array}{c|c|r|c}
d & 1 & 2 & \cdots\\
   \hline
 |\mathbb{X}| & 4 & 4 & \cdots
 \\ 
   \hline
 H_I(d)    \    & 3 & 4  &  
 \cdots\\ 
   \hline
 \delta_I(d) &2& 1& \cdots \\ 
\hline
 {\rm fp}_{I}(d) &0& -4&\cdots \\ 
\end{array}
\right.
\end{eqnarray*}
\end{example}

Let $C_{\mathbb{X}}(d)$ be a projective Reed-Muller-type code. If
$d\geq {\rm reg}(S/I(\mathbb{X}))$, then
$\delta_\mathbb{X}(d)=1$. The converse is not true as the next example
shows.

\begin{example}\label{reg-min-dis} Let 
$\mathbb{X}=\{[(1,1,1)],[(1,-1,0)],[(1,0,-1)],[(0,1,-1)],[(1,0,0)]\}$
and let $I$ be its vanishing ideal over the finite field
$\mathbb{F}_3$. Using {\em Macaulay\/}$2$
\cite{mac2} we obtain that ${\rm reg}(S/I)=3$. Notice that
$\delta_\mathbb{X}(1)=1$ because the polynomial $t_1+t_2+t_3$ 
vanishes at all points of $\mathbb{X}\setminus\{[(1,0,0)]\}$. 
\end{example}

The next example shows that $\delta_I$ is not in general
non-increasing. This is why we often require that the dimension of $I$
be at least $1$ or that $I$ is unmixed with at least $2$ minimal
primes.

\begin{example}\label{example-min-dis-graded} Let $I$ be the ideal of
$\mathbb{F}_5[t_1,t_2]$ generated by
$t_1^7,\,t_2^5,t_1^2t_2,\,t_1t_2^3$. 
Using Corollary~\ref{oct15-15-1}
and {\em Macaulay\/}$2$ 
\cite{mac2} we get that the regularity of $S/I$ is $7$, that
is, $H_I(d)=0$ for $d\geq 7$, and 
\begin{eqnarray*}
\hspace{-11mm}&&\left.
\begin{array}{c|c|c|c|c|c|c|c}
d & 1 & 2 & 3 &4 &5 &6 &\cdots\\
   \hline
{\rm deg}(S/I) & 13 & 13 & 13 & 13&13 &13 & \cdots
 \\ 
   \hline
 H_I(d)    \    & 2 & 3  &3 &2 &1 &1&
 \cdots\\ 
   \hline
 \delta_{I}(d) &6& 2& 1&1&2 &1 &\cdots \\ 
\end{array}
\right.
\end{eqnarray*}
\end{example}

\medskip

\noindent {\bf Acknowledgments.} We thank the referee for a careful
reading of the paper and for the improvements suggested. 

\bibliographystyle{plain}

\begin{thebibliography}{10}

\bibitem{BHer}{W. Bruns and J. Herzog, 
{\em Cohen-Macaulay Rings\/},  Revised Edition, Cambridge University 
Press, 1997.}

\bibitem{carvalho} C.  Carvalho, On the second Hamming weight of some
Reed-Muller type codes, Finite Fields Appl. {\bf 24} (2013), 
88--94.

\bibitem{carvalho-lopez-lopez} C. Carvalho, V. G. Lopez Neumann and  H.
H. L\'opez, Projective nested cartesian codes. Preprint 2014, {\tt
arXiv:1411.6819}.    

\bibitem{CLO} D. Cox, J. Little and D. O'Shea, {\it Ideals, 
Varieties, and Algorithms\/}, Springer-Verlag, 1992.

\bibitem{duursma-renteria-tapia} I. M. Duursma, C. Renter\'\i a and
H. Tapia-Recillas,  
Reed-Muller codes on complete intersections, Appl. Algebra Engrg.
Comm. Comput.  {\bf 11}  (2001),  no. 6, 455--462.

\bibitem{Eisen}{D. Eisenbud, {\it Commutative Algebra with a view
toward Algebraic Geometry\/}, Graduate
Texts in  Mathematics {\bf 150}, Springer-Verlag, 1995.}

\bibitem{eisenbud-syzygies} D. Eisenbud, {\it The geometry of syzygies: A
second course in commutative algebra and algebraic geometry},
Graduate Texts in Mathematics {\bf 229}, Springer-Verlag, New York,
2005.

\bibitem{geil} O. Geil, 
On the second weight of generalized Reed-Muller codes, Des. Codes
Cryptogr. {\bf 48} (2008), 323--330. 

\bibitem{geil-thomsen} O. Geil and C. Thomsen, 
Weighted Reed--Muller codes revisited, Des. Codes Cryptogr. {\bf 66}
(2013), 195--220.

\bibitem{geramita-cayley-bacharach} A. V. Geramita, M. Kreuzer and L.
Robbiano, 
Cayley-Bacharach schemes and their canonical modules, Trans. Amer.
Math. Soc. {\bf 339}  (1993),  no. 1, 163--189. 

\bibitem{gold-little-schenck} L. Gold, J. Little and H. Schenck, 
Cayley-Bacharach and evaluation codes on complete intersections, 
J. Pure Appl. Algebra {\bf 196} (2005), no. 1, 91--99. 

\bibitem{GRT} M. Gonz\'alez-Sarabia, C. Renter\'\i a and H.
Tapia-Recillas, Reed-Muller-type codes over the Segre variety,  
Finite Fields Appl. {\bf 8}  (2002),  no. 4, 511--518. 

\bibitem{mac2} D. Grayson and M. Stillman, 
{\em Macaulay\/}$2$, 1996. 
Available via anonymous ftp from {\tt math.uiuc.edu}.

\bibitem{cartesian-codes} H. H. L\'opez, C. Renter\'\i a and R. H.
Villarreal, Affine cartesian codes,
Des. Codes Cryptogr. {\bf 71} (2014), no. 1, 5--19.

\bibitem{affine-codes} H. H. L\'opez, E. Sarmiento, M. Vaz Pinto and 
R. H. Villarreal, Parameterized affine codes, Studia Sci. Math.
Hungar. {\bf 49} (2012), no. 3, 406--418.

\bibitem{MacWilliams-Sloane} F. J. MacWilliams and N. J. A. Sloane, 
The Theory of Error-correcting Codes, North-Holland, 1977. 

\bibitem{prim-dec-critical}  L. O'Carroll, F. Planas-Vilanova and R. H. Villarreal,
Degree and algebraic properties of lattice and matrix ideals, 
SIAM J. Discrete Math. {\bf 28} (2014), no. 1, 394--427. 

\bibitem{sage} SAGE Mathematical Software, http://www.sagemath.org.

\bibitem{algcodes} C. Renter\'\i a, A. Simis and R. H. Villarreal,
Algebraic methods for parameterized codes 
and invariants of vanishing ideals over finite fields, Finite Fields
Appl. {\bf 17} (2011), no. 1, 81--104.  

\bibitem{ci-codes} E. Sarmiento, M. Vaz Pinto and R. H. Villarreal, 
The minimum distance of parameterized codes on projective tori, 
Appl. Algebra Engrg. Comm. Comput. {\bf 22} (2011), no. 4, 249--264.


\bibitem{sorensen} A. S{\o}rensen, Projective Reed-Muller codes, 
IEEE Trans. Inform. Theory {\bf 37} (1991), no. 6, 1567--1576.

\bibitem{Sta1}{R. Stanley, Hilbert functions of graded 
algebras, Adv.
Math. {\bf 28} (1978), 57--83.}

\bibitem{vanishing-ideals} A. Tochimani and R. H. Villarreal, 
Vanishing ideals over rational parameterizations. Preprint, 2015,
{\rm arXiv:1502.05451v1}.

\bibitem{tohaneanu} S. Toh\v{a}neanu, Lower bounds on minimal distance of evaluation
codes, Appl. Algebra Engrg. Comm. Comput. {\bf 20} (2009), no. 5-6,
351--360.  

\bibitem{tohaneanu-vantuyl} S. Toh\v{a}neanu and A. Van Tuyl, 
Bounding invariants of fat points using a coding theory construction,
J. Pure Appl. Algebra {\bf 217} (2013), no. 2, 269--279. 

\bibitem{tsfasman} M. Tsfasman, S. Vladut and D. Nogin, {\it
Algebraic 
geometric codes{\rm:} basic notions}, Mathematical Surveys and
Monographs {\bf 139}, American Mathematical Society, 
Providence, RI, 2007. 

\bibitem{monalg-rev} R. H. Villarreal, {\it Monomial Algebras, Second Edition\/}, 
Monographs and Research Notes in Mathematics, Chapman and Hall/CRC, 2015.

\end{thebibliography}

\end{document}